\numberwithin{equation}{section}
\numberwithin{figure}{section}
\theoremstyle{plain}
\newtheorem{thm}{Theorem}[section]
\newtheorem{prop}[thm]{Proposition}
\newtheorem{definition}[thm]{Definition}
\newtheorem{lem}[thm]{Lemma}
\newtheorem{cor}[thm]{Corollary}
\newtheorem{rem}[thm]{Remark}
  \newcounter{casectr}
\theoremstyle{definition}
\theoremstyle{remark}
\newcommand{\LLL}{\mathcal{L}}
\begin{document}
\address{Chenjie Fan
\newline \indent Academy of Mathematics and Systems Science and Hua Loo-Keng Key Laboratory of \indent Mathematics, Chinese Academy of Sciences,\indent 
\newline \indent  
Beijing, China.\indent }
\email{cjfanpku@gmail.com}

\address{Gigliola Staffilani
\newline \indent 
Department of Mathematics, Massachusetts Institute of Technology, \indent 
\newline \indent
Cambridge, MA 02139, USA. \indent }
\email{gigliola@math.mit.edu}

\address{Zehua Zhao
\newline \indent Department of Mathematics and Statistics, Beijing Institute of Technology, Beijing, China.
\newline \indent MIIT Key Laboratory of Mathematical Theory and Computation in Information Security,\indent 
\newline \indent Beijing, China. \indent}
\email{zzh@bit.edu.cn}

\title{On decaying properties of nonlinear Schr\"odinger equations}
\author{Chenjie Fan}  

\author{Gigliola Staffilani}

\author{Zehua Zhao}

\maketitle

\begin{abstract}
In this paper we discuss  quantitative (pointwise) decay estimates for solutions to the 3D cubic defocusing Nonlinear Schr\"odinger equation with various  initial data, deterministic and random. We  show  that nonlinear solutions enjoy the same decay rate as the linear ones. The regularity assumption on the initial data is much lower than in previous results (see \cite{fan2021decay} and the references therein) and moreover  we  quantify the  decay, which is another novelty  of this work. Furthermore, we show that the  (physical) randomization of the initial data can be used to replace the $L^1$-data assumption (see \cite{fan2022note} for the necessity of the $L^1$-data assumption). At last, we note that this method can be also applied to derive decay estimates for  other nonlinear dispersive equations.
\end{abstract}

\tableofcontents
\section{Introduction}
\subsection{Background and motivations}
Linear dispersive estimates on unbounded  domains play a fundamental role in the study of nonlinear dispersive PDEs. It is in some sense  the starting point for studying local well-posedness of nonlinear problems. In the Schr\"odinger case, the dispersive estimate in $\mathbb{R}^d$  reads as
\begin{equation}\label{eq: dispersiveclassical}
\|e^{it\Delta}f\|_{L_{x}^{\infty}(\mathbb{R}^d)}\lesssim |t |^{-\frac{d}{2}}\|f\|_{L^1_x(\mathbb{R}^d)},
\end{equation}
where $d$ is the spatial dimension.

 For the nonlinear defocusing Schr\"odinger (NLS) equation, great progress has been made in recent years to understand the scattering behavior of solutions,  see for example, \cite{bourgain1999global}, \cite{colliander2008global}, \cite{dodson2012global}, \cite{kenig2006global}. Those results say that  given an initial data in a certain (critical) Sobolev space $\dot{H}^{s}$  there exists a unique   global  solutions $u$ to NLS, and the solution scatters in the sense that  there exists some $u^{\pm}$, so that 
\begin{equation}\label{eq: scatterclassical}
	\|u(t)-e^{it\Delta}u^{\pm}\|_{\dot{H}^{s_{c}}(\mathbb{R}^d)}\rightarrow 0, \quad \textmd{ as } t\rightarrow \pm \infty.
\end{equation}
From \eqref{eq: dispersiveclassical} and \eqref{eq: scatterclassical} one may then conjecture for the nonlinear solutions $u$ a similar estimate  as \eqref{eq: dispersiveclassical}, for example for $d=3$
\begin{equation}\label{eq: decayo}
\|u(t)\|_{L_{x}^{\infty}(\mathbb{R}^3)}\leq C(u_{0})|t|^{-\frac{3}{2}}.	
\end{equation}

 Note that conclusion \eqref{eq: decayo}  is not a-priori obvious since for example it is not known whether $u^{\pm}$ in \eqref{eq: scatterclassical} is in $L^{1}$, and moreover the convergence in \eqref{eq: scatterclassical} is not a-priori in $L^{\infty}$ and certainly the rate is also not known. Indeed, the study of decaying estimate for nonlinear solutions is related to the asymptotic convergence rate in \eqref{eq: scatterclassical} (see the Appendix of \cite{fan2022note}).

 The decay estimates for nonlinear dispersive equations have been studied widely. In \cite{lin1978decay}, Lin and Strauss studied the decay of the $L^{\infty}$-norm of solutions to the 3D NLS based on Morawetz estimate. (See also Corollary 3.4 in \cite{grillakis2013pair} for the cubic Hartree case.) It is also possible to apply the vector field methods and use commutator type estimates to derive decay estimates, see for example, \cite{klainerman1985uniform}. (See also \cite{klainerman1983global}, \cite{shatah1982global}, \cite{hayashi1986c}.) 
  
  For the Schr\"odinger case, in particular, before the work in \cite{bourgain1999global},  decay estimates \eqref{eq: decayo} were key steps to prove  scattering results. We also refer to \cite{visciglia2008decay}, and the references therein, for results on decay estimates regarding NLS without decay rates. 
  
  In the present article, as in  previous works of the first and the third authors, \cite{fan2021decay}, \cite{fan2022note}, the starting point is whether,  given given the fact that scattering behaviors have by now been studied   extensively, can one further improve the  understanding of \eqref{eq: decayo}? Conceptually, one  wants to understand how fast or slow, 
  scattering  behaviors can happen. The answer to this question  will give more quantitative estimates for $C(u_{0})$ in \eqref{eq: decayo} as well.

 To make the question more concrete, let us focus on the defocusing cubic NLS in 3D, 
 \begin{equation}\label{eq: dcub}
 iu_{t}+\Delta u=|u|^{2}u, \quad u(0,x) = u_0(x).	
 \end{equation}
It has been proved in \cite{fan2021decay}, \cite{fan2022note}, for all $u_{0}\in H^{4}\cap L^{1}$,  one has that \eqref{eq: decayo} holds and  $C(u_{0})$  only depends on the size $\|u_{0}\|_{H^{4}\cap L^{1}}$ rather than the profile of $u_{0}$.
This follows from some concentration compactness consideration. But it was not clear how this $C$ depends on the $\|u_{0}\|_{_{H^{4}\cap L^{1}}}$ in a quantitative way. See also \cite{guodecay}.

It has also been proved in \cite{fan2022note} that not even a weaker version of \eqref{eq: mainestimate} can  hold if one replaces $X$ with only a  Sobolev type  space $H^{s}$. To be more precise, for any $g(t)>0$ that goes to infinity as $t\rightarrow \infty$, we can construct for instance $H^{100}$  solutions to \eqref{eq: dcub} such that 
\begin{equation}
\limsup_{t\rightarrow \infty}g(t)\|u(t)-e^{it\Delta}u^{+}\|_{\dot{H}^{1/2}}=\infty.
\end{equation}

Compared to  \cite{fan2021decay}, \cite{fan2022note} and other previous results, to the best of our knowledge, the main three \textbf{new} points in the current  paper are: 1. To obtain an estimate such \eqref{eq: mainestimate}  we can  considerably \textbf{lower} the regularity assumption of the solution, to almost the critical level (see Theorem \ref{thm3}); 2. We can obtain the \textbf{quantitative} decay results by characterizing the implicit constant in the decay estimates (see Theorem \ref{thm1}, Theorem \ref{thm2}, Theorem \ref{thm3} and Theorem \ref{thm4}) ; 3. We discuss the \textbf{randomized} case and show that  (physical) randomization of the initial data can be used to replace the $L^1$-data assumption (see Theorem \ref{thm4} and see \cite{fan2022note} for the necessity of the $L^1$-data assumption). 

It is natural to ask why the $L^{1}$ assumption and randomization in physical space would be involved, and in some sense necessary, in the quantitative study of decay estimates for the NLS.
To see this, recall that for every scattering solution $u$ to \eqref{eq: dcub}, for any given $\delta>0$, one can find $L>0$, so that 
 \begin{equation}\label{eq: smalll}
 \|u(t)\|_{{L^5_t}[L,\infty)L_{x}^{5}}<\delta.	
 \end{equation}
 However, it is impossible to characterize this $L$ in any quantitative way if one considers initial data $u_{0}\in H^{s}$. Since, for any given $u_{0}$, one can evolve backward, (non-linearly), for a long time, and get a new initial data, which delays the $L$ so that \eqref{eq: smalll} holds. To be more precise, let us fix a Schwarz initial data $u_{0}$, with $\|u_{0}\|_{H^{s}}\leq 1$. Let $u$ be the associated solution. We know that $\|u\|_{L_{t,x}^{5}[0,\infty)}\geq \delta_{0}>0$ for some $\delta_{0}$, and $\sup_{t}\|u(t)\|_{H^{s}}\leq M$ for some $M>0$, since such a solution scatters. Let $u_{0,n}(x):=u(-n,x)$ and let $u_{n}$ be the associated solution with initial data $u_{0,n}$. Note that $\|u_{0,n}\|_{H^{s}}\leq M$. Now  fix $\delta=\delta_{0}/2$ and evaluate $L_{n}$ so that \eqref{eq: smalll} holds. One has $L_{n}\geq n$, and cannot be bounded by $M$. The problem lies in the time translation symmetry in the cubic NLS. Both $L^{1}$ assumption and randomization in physical space, play a role of removing the time translation symmetry in cubic NLS.
 
We believe that the methods in this paper can be applied to derive decay estimates for  other dispersive models with suitable modifications. See Section \ref{6} for more discussions.\vspace{3mm}

\subsection{Notations}
Throughout this note, we use $C$ to denote  the universal constant and $C$ may change line by line. Also, $\alpha,\beta$ may change line by line. We say $A\lesssim B$, if $A\leq CB$. We say $A\sim B$ if $A\lesssim B$ and $B\lesssim A$. We also use notation $C_{B}$ to denote a constant depends on $B$. We use usual $L^{p}$ spaces and Sobolev spaces $H^{s}$. Since we will always work on $\mathbb{R}^{3}$, we will write $L^{p}(\mathbb{R}^{3})$ as $L^{p}$ and $L_{t,x}^{p}(I\times \mathbb{R}^{3})$ as $L_{t,x}^{p}(I)$. We will also use $S^{s}$ to denote the Strichartz space,
\begin{equation}
\|u\|_{S^{s}(I)}:=\|\langle D \rangle ^{s}u\|_{L_{t}^{q}L_{x}^{r}(I)}.	
\end{equation}

\subsection{The statement of main results}
We are now ready to state our results.
Consider the Cauchy problem for the 3D cubic defocusing NLS,
\begin{equation}\label{eq: cubicnls}
iu_{t}+\Delta u=|u|^{2}u,\quad
u(0,x)=u_{0}(x).	
\end{equation}
The purpose of the current article is to present decay estimates of the form
\begin{equation}\label{eq: mainestimate}
\|u(t)\|_{L_{x}^{\infty}}\leq C(\|u_{0}\|_{X})|t|^{-\frac{3}{2}}.
\end{equation}
The constant $C$ in \eqref{eq: mainestimate} in this article will only depend on the size (but not the profile) of the initial data, measured by a certain norm $\|\cdot\|_{X}$. We will characterize how this $C$ depends on $\|u_{0}\|_{X}$. With different choices of $\|\cdot\|$ detailed below, we will derive results involving polynomial dependence or exponential dependence.

Below, $\alpha, \beta, C$ are  constants that may change line by line. We have
\begin{thm}\label{thm1}
For the  initial value problem \eqref{eq: cubicnls} with initial data $u_{0}\in X=H^{1}\cap L^{1}$, one has that \eqref{eq: mainestimate} holds with
\begin{equation}\label{eq: me1}
C(\|u_{0}\|_{X})=C\exp C\|u_{0}\|_{X}^{\beta},	
\end{equation}
for some $\beta>0$.
\end{thm}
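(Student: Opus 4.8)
The plan is to turn the qualitative scattering theory for \eqref{eq: cubicnls} into a quantitative statement: cut the time axis into a \emph{controlled} number of intervals on which a critical spacetime norm of $u$ is small, then propagate the pointwise bound \eqref{eq: mainestimate} interval by interval through a perturbative Duhamel estimate whose cost per interval is at most polynomial in $\|u_0\|_X$. Since the number of intervals will be polynomial in $\|u_0\|_{H^1}$, the product of these per-interval factors is $\exp(C\|u_0\|_X^{\beta})$, which is precisely the shape \eqref{eq: me1}.

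First I would record the quantitative global theory. For $u_0\in H^1$ the equation is globally well-posed and scattering, with conserved mass and energy and persistence of regularity; moreover the interaction Morawetz estimate gives a global bound $\|u\|_{L^4_{t,x}(\mathbb{R}\times\mathbb{R}^3)}\le C\|u_0\|_{H^1}^{\gamma_0}$, which a standard good-local-theory iteration upgrades to a bound $\|u\|_{\mathcal N(\mathbb{R})}\le M$ on a critical Strichartz-type norm, with $M=M(\|u_0\|_{H^1})$ \emph{at most polynomial}. Hence $[0,\infty)$ (and, symmetrically, $(-\infty,0]$) splits into $N\le C(M/\delta_0)^{O(1)}\lesssim\|u_0\|_{H^1}^{O(1)}$ consecutive intervals $I_0,\dots,I_{N-1}$, $I_j=[t_j,t_{j+1})$, with $\|u\|_{\mathcal N(I_j)}\le\delta_0$, $\delta_0$ a small absolute constant fixed at the end. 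The range $0<t<1$ is handled directly: $\|e^{it\Delta}u_0\|_{L^\infty_x}\le|t|^{-3/2}\|u_0\|_{L^1}$, while the Duhamel piece on $[0,1]$ is controlled by $\|e^{i\tau\Delta}F\|_{L^p_x}\lesssim|\tau|^{-3(\frac12-\frac1p)}\|F\|_{L^{p'}_x}$ for $2\le p<6$ (so the time singularity is integrable) with $F=|u|^2u$ and $\|\,|u|^2u\|_{L^{p'}_x}=\|u\|_{L^{3p'}_x}^3\lesssim\|u\|_{H^1}^3$, giving $\|u(t)\|_{L^\infty_x}\le C\|u_0\|_X^3|t|^{-3/2}$. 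Similarly, for $1\le t\le T_0$ with $T_0:=C_0\|u_0\|_X^{O(1)}$ a polynomial threshold, splitting $[0,t]=[0,t-1]\cup[t-1,t]$ and estimating the first part by $|t-s|^{-3/2}$ against $\|u(s)\|_{L^3_x}^3\lesssim\|u_0\|_{H^1}^3$ yields $\|u(t)\|_{L^\infty_x}\le C(\|u_0\|_X)\le C(\|u_0\|_X)T_0^{3/2}|t|^{-3/2}$, of the desired form with a \emph{polynomial} constant. Letting $j_0$ be the first index with $t_{j_0}>T_0$, this shows $A_{j_0}:=\sup_{1\le t\le t_{j_0}}\langle t\rangle^{3/2}\|u(t)\|_{L^\infty_x}\lesssim\|u_0\|_X^{O(1)}$.

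The core is then an induction on $j\ge j_0$ establishing $A_{j+1}\le C\|u_0\|_X^{2}A_j+C(\|u_0\|_X)$, where $A_{j+1}:=\sup_{1\le t\le t_{j+1}}\langle t\rangle^{3/2}\|u(t)\|_{L^\infty_x}$ (supplemented, as needed to close the system, by auxiliary weighted norms $\langle t\rangle^{\gamma_r}\|u(t)\|_{L^r_x}$, $2\le r<6$, and their spacetime analogues). This is a continuity argument on $I_j$: assuming $\sup_{I_j}\langle t\rangle^{3/2}\|u(t)\|_{L^\infty_x}\le 2a_{j+1}$, one writes, for $t\in I_j$, $u(t)=e^{it\Delta}u_0-i\int_0^{t/2}-i\int_{t/2}^{t-1}-i\int_{t-1}^{t}e^{i(t-s)\Delta}(|u|^2u)\,ds$. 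The linear term is $\le|t|^{-3/2}\|u_0\|_{L^1}$. In the first integral $|t-s|\ge t/2$, so it is $\le Ct^{-3/2}\|u_0\|_{L^2}^2\int_1^{t/2}\|u(s)\|_{L^\infty_x}\,ds$, which on inserting $\|u(s)\|_{L^\infty_x}\le A_j\langle s\rangle^{-3/2}$ for $s\le t_j$, the bootstrap bound for $s\in I_j$, and the largeness of $t_j>T_0$, contributes $\le(C\|u_0\|_X^2A_j+\tfrac1{100}a_{j+1})\langle t\rangle^{-3/2}$. In the second integral $1\le|t-s|\le t/2$ with $s\ge t/2$, so it again feeds back the weighted norm, the portion over $I_j$ carrying a small factor $\delta_0^{\theta}$ after Hölder in spacetime against controlled subcritical norms. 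The last integral, over the unit window $[t-1,t]$, is delicate because $|t-s|^{-3/2}$ is not integrable there; I would pass through $L^p_x$, $2\le p<6$, via the above inhomogeneous estimate (now with integrable singularity), bound $\|(|u|^2u)(s)\|_{L^{p'}_x}$ and its small fractional derivatives by interpolating mass conservation, the energy bound, and the bootstrapped decay, and recover $L^\infty_x$ with \emph{no loss in the rate} by a Morrey-type embedding combined crucially with the smallness $\|u\|_{\mathcal N(I_j)}\le\delta_0$, obtaining $\le(C\delta_0^{\theta}a_{j+1}+C(\|u_0\|_X))\langle t\rangle^{-3/2}$. Choosing $\delta_0$ small and $T_0$ large so the total coefficient of $a_{j+1}$ is $\le\tfrac12$ closes the bootstrap with $a_{j+1}=2(C\|u_0\|_X^2A_j+C(\|u_0\|_X))$, giving the recursion. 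Iterating from $j_0$ to $N$,
\[
A_N\le(C\|u_0\|_X^{2})^{N}\big(A_{j_0}+C(\|u_0\|_X)\big)\le\exp\!\big(CN(1+\log\|u_0\|_X)\big)\cdot\mathrm{poly}(\|u_0\|_X)\le C\exp\!\big(C\|u_0\|_X^{\beta}\big),
\]
for suitable $\beta>0$ (and $\le\exp(CN)\le\exp(C)$ when $\|u_0\|_X\lesssim 1$); the same holds on $(-\infty,0]$, which with the bounded-time analysis yields \eqref{eq: mainestimate}--\eqref{eq: me1}.

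The main obstacle is the unit-window Duhamel term: the $|t|^{-3/2}$ rate is borderline non-integrable in time, so recovering an $L^\infty_x$ bound there while preserving the \emph{sharp} rate, with only $H^1$ regularity available, forces one through $L^p_x$ spaces with $p<6$ and Morrey-type embeddings, and is possible only because the critical spacetime norm is small on the current interval — which is also where the precise package of auxiliary weighted norms has to be engineered so everything closes. A second, bookkeeping difficulty is to keep the per-interval factor polynomial in $\|u_0\|_X$ and the interval count polynomial in $\|u_0\|_{H^1}$ (in particular, that the $L^4_{t,x}$-to-critical-norm upgrade and the slicing do not lose an exponential), since any such slippage would push the final constant past $\exp(C\|u_0\|_X^{\beta})$.
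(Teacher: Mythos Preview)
Your strategy is a close cousin of the paper's but organized differently. Both rely on the same two pillars: (i) the polynomial-in-$\|u_0\|_{H^1}$ bound on the global scattering norm, and (ii) the Sobolev trick for the near-time Duhamel window (apply $\langle\nabla\rangle$, land in $L^{3+}_x$ so that the dispersive singularity becomes $(t-s)^{-1/2-}$, which is integrable). The paper runs a \emph{single} bootstrap on $A(t)=\sup_{\tau\le t}\tau^{3/2}\|u(\tau)\|_{L^\infty}$, splits the Duhamel as $[0,M]\cup[M,t-M]\cup[t-M,t]$ with $M\sim M_1^4$, and closes with a continuous Gronwall against the integrable weight $\|u(s)\|_{L^{6-}}^{4+}$; the exponential constant then comes from $\exp(\int\|u\|_{L^{6-}}^{4+})$. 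You instead slice time into $N\lesssim\|u_0\|_{H^1}^{O(1)}$ intervals of small scattering norm and propagate $A_j$ by a per-interval bootstrap, so the exponential arises as $(C\|u_0\|_X^2)^N$. These are the discrete and continuous versions of the same Gronwall mechanism and yield the same bound.

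There is, however, a real gap in your treatment of the middle piece $\int_{t/2}^{t-1}$. The naive estimate $\||u|^2u\|_{L^1}\le\|u\|_{L^2}^2\|u\|_{L^\infty}$ gives a contribution $\lesssim M_1^2\,a_{j+1}\,t^{-3/2}$ on the $I_j$ portion, with \emph{no} smallness: the $(t-s)^{-3/2}$ integral over $[t_j,t-1]$ is merely $O(1)$, and no factor of $\delta_0$ appears. To extract a $\delta_0^\theta$ you would have to reroute one of the $u$'s through a Strichartz norm (e.g.\ $\||u|^2u\|_{L^1}\le\|u\|_{L^\infty}^{1-\theta}\|u\|_{L^5}^{\theta}\|u\|_{L^{q}}\|u\|_{L^2}$ for suitable $q$), but then the H\"older in time must simultaneously keep the $(t-s)^{-3/2}$ factor finite and still feed back the sharp $t^{-3/2}$ via the bootstrap; this is doable but nontrivial, and your sketch does not indicate how. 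The paper sidesteps this entirely by taking the near-time window of width $M\sim M_1^4$ rather than $1$: then the middle piece picks up an $M^{-1/2}M_1^2\ll 1$ factor for free, and the scattering norm is used only once, via Gronwall on the $[t-M,t]$ piece. If you keep your interval-iteration framework, the cleanest fix is to adopt the paper's $[0,M]\cup[M,t-M]\cup[t-M,t]$ split inside each $I_j$ (with $M$ polynomial in $\|u_0\|_X$), rather than the unit window.
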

\begin{cor}\label{cor: thm1}
We also note that, if one considers the asymptotic behavior, the implicit constant in Theorem \ref{thm1} can be improved from the exponential bound to the polynomial bound in the following sense,
\begin{equation}
\limsup_{t\rightarrow \infty} t^{\frac{3}{2}}\|u(t)\|_{L_{x}^{\infty}}\leq C(1+\|u_{0}\|_{X})^{\beta}, 
\end{equation}
for some $\beta>0$. We will give the proof for this remark in the end of Section 3.1 (after giving the proof for Theorem \ref{thm1}). We note that the analogous statement also holds for \eqref{eq: thm1ran} in Theorem \ref{thm4}.
\end{cor}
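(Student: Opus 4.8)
The plan is to pinpoint where the factor $\exp(C\|u_0\|_X^\beta)$ in \eqref{eq: me1} comes from and then sidestep it in the asymptotic regime. That factor arises because the proof of Theorem \ref{thm1}, working from $t=0$, must chain a short-interval perturbative estimate across a number of consecutive time windows that is controlled only through the \emph{size} of the global Strichartz norm $\|u\|_{S^1(\mathbb{R})}$ (itself only bounded by a large, though polynomial, function of $\|u_0\|_X$), and chaining $N$ windows multiplies $N$ local constants, producing the exponential. For the $\limsup$ this chaining is unnecessary: by the global well-posedness and scattering theory for \eqref{eq: cubicnls}, every solution has a time $T_0 = T_0(u_0)$ — allowed to depend on the profile of $u_0$, which is harmless since we take $t\to\infty$ — beyond which $\|u\|_{S^1([T_0,\infty))} < \delta_0$ for a fixed small universal $\delta_0$. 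On the single half-line $[T_0,\infty)$ one then runs one perturbative step around the free evolution of $u(T_0)$.

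Concretely: from Duhamel based at $T_0$, $u(t) = e^{i(t-T_0)\Delta}u(T_0) - i\int_{T_0}^t e^{i(t-s)\Delta}(|u|^2u)(s)\,ds$, take $L^\infty_x$ norms and apply \eqref{eq: dispersiveclassical} to the linear term, which contributes $\lesssim (t-T_0)^{-3/2}\|u(T_0)\|_{L^1}$. For the Duhamel term, split the $s$-integral at the midpoint $(t+T_0)/2$: on the far piece the weight $(t-s)^{-3/2}$ is $\sim t^{-3/2}$ and (using $\||u|^2u\|_{L^1}\le \|u\|_{L^\infty_x}\|u\|_{L^2_x}^2$ and mass conservation) the remaining $s$-integral is finite; on the near piece the $L^1\to L^\infty$ weight is not integrable at $s=t$, so there one works in a subcritical Lebesgue exponent for which the dispersive time-weight \emph{is} integrable near the diagonal, at the cost of having to bootstrap an auxiliary weighted norm (e.g.\ $(1+t-T_0)^{1/2}\|u(t)\|_{L^3_x}$) jointly with $(1+t-T_0)^{3/2}\|u(t)\|_{L^\infty_x}$ — the standard device for upgrading Strichartz control to pointwise decay. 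Running a continuity argument for the sum of these two weighted norms on $[T_0,\infty)$, every nonlinear contribution carries a factor $\|u\|_{S^1([T_0,\infty))}\le\delta_0$, so the bootstrap closes and gives, uniformly in $T_0$,
\begin{equation*}
\limsup_{t\to\infty} t^{3/2}\|u(t)\|_{L^\infty_x} \;\le\; C\big(\|u(T_0)\|_{L^1} + \|u(T_0)\|_{L^{3/2}} + \|u_0\|_{L^2}\big).
\end{equation*}

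It remains to bound the right-hand side by a polynomial in $\|u_0\|_X = \|u_0\|_{H^1\cap L^1}$ with constants \emph{independent of $T_0$} (hence of the profile). Mass conservation handles $\|u_0\|_{L^2}$, interpolation reduces $\|u(T_0)\|_{L^{3/2}}$ to $\|u(T_0)\|_{L^1}$ and $\|u(T_0)\|_{L^2}$, so everything comes down to controlling $\sup_t\|u(t)\|_{L^1}$ polynomially in $\|u_0\|_X$. This is precisely the $L^1$-propagation step already performed in the proof of Theorem \ref{thm1}, obtained from $\|u(t)\|_{L^1}\le\|u_0\|_{L^1}+\int_0^t\|u(s)\|_{L^3_x}^3\,ds$ together with $\|u(s)\|_{L^3_x}^3\le\|u(s)\|_{L^\infty_x}\|u_0\|_{L^2}^2$ and a continuity argument coupled to the decay estimate itself; I would simply invoke it here. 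Feeding the resulting polynomial bound into the displayed inequality yields $\limsup_{t\to\infty}t^{3/2}\|u(t)\|_{L^\infty_x}\le C(1+\|u_0\|_X)^\beta$.

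The main obstacle is exactly this $L^1$ control: $\|u(t)\|_{L^1}$ is neither conserved nor Strichartz-controlled, so the bound must be extracted by a bootstrap coupled to the decay one is proving, and — the delicate point for the present Corollary — that bound must be polynomial in $\|u_0\|_X$ and uniform in the profile-dependent time $T_0$ at which the half-line perturbative step is started; everything else is routine Strichartz and interpolation bookkeeping. The analogous statement for \eqref{eq: thm1ran} in Theorem \ref{thm4} follows by the same scheme, with the deterministic $L^1$-propagation replaced by the almost-sure estimates available for the physical randomization of the initial data.
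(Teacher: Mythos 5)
There is a genuine gap, and it sits exactly at the point you call "the main obstacle." Your argument restarts Duhamel at a profile-dependent time $T_0$, so the linear term is $e^{i(t-T_0)\Delta}u(T_0)$ and its $L^\infty_x$ decay costs $\|u(T_0)\|_{L^1}$. This quantity is not controlled by anything in the paper, and it cannot be: $\|u(t)\|_{L^1_x}$ is neither conserved nor Strichartz-bounded, and already for the free evolution it generically grows like $t^{3/2}$ (the solution spreads, $\|e^{it\Delta}f\|_{L^\infty}\sim t^{-3/2}$ with conserved $L^2$ mass), so $\sup_t\|u(t)\|_{L^1}$ is infinite for generic scattering data. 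The step you invoke to fix this does not exist: there is no "$L^1$-propagation step" in the proof of Theorem \ref{thm1} — that proof only ever uses the $L^1$ norm of the data $u_0$ at time zero, through the dispersive bound on $e^{it\Delta}u_0$ in the Duhamel formula based at $t=0$. Moreover, the inequality you propose for such propagation, $\|u(t)\|_{L^1}\le\|u_0\|_{L^1}+\int_0^t\|u(s)\|_{L^3_x}^3\,ds$, is not valid, since $e^{it\Delta}$ is unbounded on $L^1$ (neither the linear term nor the Duhamel integrand can be estimated in $L^1$ this way). Replacing $e^{i(t-T_0)\Delta}u(T_0)$ by $e^{it\Delta}u_0-i\int_0^{T_0}e^{i(t-s)\Delta}N(u)\,ds$ does not rescue the scheme either: the second piece is then $\lesssim t^{-3/2}\,T_0\,M_1^{3}$, and $T_0$ is profile-dependent, whereas the corollary requires a constant depending only on $\|u_0\|_X$.

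The paper's proof avoids this entirely and is much shorter: it keeps the Duhamel decomposition of Lemma \ref{lem: eapriori} based at $t=0$ (so only $\|u_0\|_{L^1}$ ever enters, and the $F_1$, $F_2$ constants are polynomial in $M_1$ since $M\sim M_1^{4}$), and reuses the already-proved inequality \eqref{eq: atlong}. For the asymptotic statement one only needs that, by scattering, the tail quantity $\bigl(\int_{t-M}^{t}\|u\|_{L_x^{6-}}^{4+}\,ds\bigr)^{\frac{1}{4+}}$ is smaller than any fixed $\epsilon$ once $t$ is large (how large is profile-dependent, which is harmless for a $\limsup$); since $A(t)$ is already known to be finite by Theorem \ref{thm1}, the term $CM_1^{\beta_3}\epsilon\,A(t)$ can be absorbed into the left-hand side, leaving $\limsup_{t\to\infty}t^{3/2}\|u(t)\|_{L^\infty_x}\le CM_1^{\beta_2}$, a polynomial bound. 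In other words, the exponential in \eqref{eq: me1} comes from the Gronwall step, and the corollary is obtained by replacing Gronwall with a one-shot absorption in the large-time regime — not by re-running a perturbative argument from a late time $T_0$, which is precisely what the lack of $L^1$ control forbids.
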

\begin{rem}
We note that the quantitative decay result Theorem \ref{thm1} immediately gives the \textbf{quantitative scattering rate} via direct integration and Strichartz estimate, which describes how fast the solution scatters to the final states. (See the Appendix of \cite{fan2022note} for more details). To be more precise, one can obtain the following: for $u$ satisfies \eqref{eq: cubicnls} with initial data $u_{0}\in X=H^{1}\cap L^{1}$
\begin{equation}
  \|u(t)-e^{it\Delta}u^{+}\|_{\dot{H}_{x}^{\frac{1}{2}}} \leq C(\|u_{0}\|_{X}) t^{-2},
\end{equation}
and
\begin{equation}\label{eq: rate2}
  \|u(t)\|_{L_{t,x}^{5}(t\geq s)}  \leq C(\|u_{0}\|_{X}) s^{-\frac{7}{10}},
\end{equation}
where $C(\|u_{0}\|_{X})=C\exp C\|u_{0}\|_{X}^{\beta}$.
\end{rem}
If one further assumes $xu_{0}\in L^{2}$ (i.e. the finite variance condition), we can improve the  exponential bound to the polynomial bound in the dispersive estimate \eqref{eq: mainestimate} in the following sense:
\begin{thm}\label{thm2}
	For the  initial value problem \eqref{eq: cubicnls} with initial data $u_{0}\in H^{1}\cap L^{1}$ and $xu_{0}\in L^{2}$, let $\|u_{0}\|_{X}=\|u_{0}\|_{H^{1}\cap L^{1}}+\|xu_{0}\|_{L^2}$ one has that \eqref{eq: mainestimate} holds for
\begin{equation}\label{eq: me2}
C(\|u_{0}\|_{X})=C(1+\|u_{0}\|_{X})^{\beta},	
\end{equation}
for some $\beta>0$.
\end{thm}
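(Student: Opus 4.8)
The plan is to run the same machinery that yields Theorem \ref{thm1}, but to replace the crude control of the dispersion time $L$ in \eqref{eq: smalll} (which is responsible for the exponential factor $C\exp C\|u_0\|_X^\beta$) by a quantitative estimate obtained from the pseudoconformal / Morawetz-type identity, which is available precisely because $xu_0 \in L^2$. Concretely, I would first recall the virial-type quantity $V(t) = \|(x+2it\nabla)u(t)\|_{L^2}^2$ (equivalently $\||x|\, e^{-it\Delta}u(t)\|_{L^2}^2$ up to harmless terms), whose time derivative for the defocusing cubic NLS in $3$D is controlled by the (sign-definite, up to a good term) potential energy; integrating this identity gives a bound of the form $\||x|\,U(-t)u(t)\|_{L^2} \lesssim \|xu_0\|_{L^2} + t^{1/2}\,E(u_0)^{1/2}$ or, after optimizing, an a priori polynomial-in-time control that translates via the Gagliardo--Nirenberg / weighted-Strichartz inequality $\|u(t)\|_{L^\infty} \lesssim t^{-3/2}\||x| U(-t)u(t)\|_{L^2}^{1-\theta}(\cdots)^\theta$ into exactly the desired $t^{-3/2}$ decay with a polynomial constant. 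In other words, the finite-variance hypothesis lets us bypass the concentration-compactness step used to prove \eqref{eq: smalll} with no rate, and instead furnishes a direct quantitative smallness threshold: there is $L = C(1+\|u_0\|_X)^{\beta}$ after which $\|u\|_{L^5_{t,x}([L,\infty))} < \delta_0$.

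The key steps, in order, would be: (1) set up the weighted energy $\Gamma(t) := \|J(t)u(t)\|_{L^2}^2$ with $J(t) = x + 2it\nabla$, compute $\frac{d}{dt}\Gamma(t)$ using \eqref{eq: cubicnls}, and observe that the cubic defocusing sign makes the leading term favorable, giving $\Gamma(t) \lesssim \Gamma(0) + (1+t)\,(\|u_0\|_{L^2}^2 + \|\nabla u_0\|_{L^2}^2)$ by Gronwall-free integration (using conservation of mass and energy); (2) convert this into an $L^\infty$ bound via the standard identity $e^{it\Delta}(e^{i|x|^2/4t}\,\cdot\,)$ relating $J(t)u$ to the derivative of the pseudoconformal transform of $u$, yielding $\|u(t)\|_{L^6} \lesssim t^{-1}\|J(t)u(t)\|_{L^2}$ and, interpolating with the $\dot H^1$ bound, $\|u(t)\|_{L^\infty}\lesssim t^{-3/2}\|J(t)u\|_{L^2}^{1/2}\|\nabla u\|_{L^2}^{1/2}$; (3) feed the polynomial bound on $\|J(t)u\|_{L^2}$ from step (1) into step (2) to get $\|u(t)\|_{L^\infty}\le C(1+\|u_0\|_X)^\beta (1+t)^{-3/2}\cdot(1+t)^{1/4}$, which is not yet good enough, so (4) bootstrap: use step (3) on a dyadic time interval $[T, 2T]$ as a \emph{local} smallness input, run the local-in-time Strichartz/perturbation argument from the proof of Theorem \ref{thm1} restarted at time $T$ (where now the initial weighted norm at time $T$ is itself only polynomially large), and iterate to remove the spurious $(1+t)^{1/4}$ growth; alternatively, optimize the splitting point in the pseudoconformal estimate directly.

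The main obstacle I expect is step (4), i.e.\ closing the bootstrap so that the genuinely sharp exponent $-3/2$ emerges with a constant that is polynomial and not exponential in $\|u_0\|_X$: the naive integration of the virial identity loses a power of $t^{1/4}$, and one must be careful that restarting the perturbative argument at large times $T$ does not reintroduce an exponential dependence through the number of iteration steps or through the growth of $\|J(T)u(T)\|_{L^2}$. The resolution is that finite variance gives a \emph{monotone-in-a-good-direction} weighted quantity, so the number of "bad" dyadic intervals where $\|u\|_{L^5_{t,x}}$ is not yet small is itself bounded polynomially in $\|u_0\|_X$ (each such interval must carry a fixed amount of the conserved/monotone quantity), which caps the iteration count and keeps all constants polynomial. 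A secondary technical point is that we only assume $u_0 \in H^1$, not higher regularity, so the weighted-energy computation must be justified by a limiting/density argument and all Strichartz estimates must be run at the $\dot H^{1/2}$-critical (or $H^1$-subcritical) level, exactly as in Theorem \ref{thm3}'s low-regularity framework; this is routine but needs to be stated.
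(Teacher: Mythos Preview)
Your proposal has two genuine gaps.  First, the interpolation you assert in step~(2),
\[
\|u(t)\|_{L^\infty}\lesssim t^{-3/2}\,\|J(t)u\|_{L^2}^{1/2}\,\|\nabla u\|_{L^2}^{1/2},
\]
is false in three dimensions: the pseudoconformal quantity $\|J(t)u\|_{L^2}$ only controls $\|u(t)\|_{L^6}$ (via $\|f\|_{L^6}\lesssim t^{-1}\|J(t)f\|_{L^2}$), and both $\|u\|_{L^6}$ and $\|\nabla u\|_{L^2}$ sit at the same $\dot H^1$ scaling, so no Gagliardo--Nirenberg combination of them can reach $L^\infty$.  Concretely, testing on $u_\lambda(x)=\lambda^{1/2}\phi(\lambda x)$ for a fixed bump $\phi$ gives $\|u_\lambda\|_{L^\infty}\sim\lambda^{1/2}$ while $\|u_\lambda\|_{L^6}\sim\|\nabla u_\lambda\|_{L^2}\sim 1$.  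Second, your step~(1) underestimates how good the virial identity is: for the defocusing cubic in $d=3$ one has
\[
\frac{d}{dt}\Bigl(\tfrac12\|J(t)u\|_{L^2}^2+t^2\!\int|u|^4\Bigr)\le 0,
\]
so $\|J(t)u\|_{L^2}\le \|xu_0\|_{L^2}$ \emph{uniformly in $t$}.  There is no $(1+t)^{1/4}$ loss to repair, and the entire dyadic restart scheme in step~(4) is aimed at a phantom.

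The paper's argument is shorter and structurally different: it never attempts to reach $L^\infty$ directly from the pseudoconformal bound.  It only extracts
\[
\|u(t)\|_{L^{6}}\lesssim t^{-1}M_1,\qquad\text{hence}\qquad \|u(t)\|_{L^{6-}}\lesssim t^{-(1-)}M_1,
\]
and feeds this into the bootstrap inequality already established in the proof of Theorem~\ref{thm1}.  Recall that Theorem~\ref{thm1} reduces to a Gronwall inequality of the form
\[
A(t)^{4+}\le CM_1^{\beta}+CM_1^{\beta}\int_0^t A(s)^{4+}\,\|u(s)\|_{L^{6-}}^{4+}\,ds,
\]
and the exponential constant there comes solely from the size of $\int_0^\infty\|u(s)\|_{L^{6-}}^{4+}ds$.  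With the new pointwise decay one has $\|u(s)\|_{L^{6-}}^{4+}\lesssim M_1^{4+}(M+s)^{-(4-)}$ for $s\ge M$, so the Gronwall kernel becomes integrable with total mass $\lesssim M_1^{4+}M^{-(3-)}$, which is $\le 1$ once $M$ is chosen to be a sufficiently large polynomial in $M_1$.  Gronwall then yields $A(t)\lesssim M_1^\beta$ directly, with no iteration over dyadic blocks and no restart.
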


It is often natural to consider NLS in the $H^{1}$ space since it is corresponding to the energy conservation law. Sometimes, it is also of interest to lower the $H^{s}$ regularity of initial data, and we recall that the Schr\"odinger initial value problem \eqref{eq: cubicnls} is $H^{\frac{1}{2}}$ critical. We have the $H^{\frac{1}{2}}$-type result as follows.
\begin{thm}\label{thm3}
	For the  initial value problem \eqref{eq: cubicnls} with initial data $u_{0}\in H^{s}\cap L^{1}$ and $xu_{0}\in L^{2}$, let $\|u_{0}\|_{X}=\|u_{0}\|_{H^{s}\cap L^{1}}+\|xu_{0}\|_{L^2_x}$, where $s>\frac{1}{2}$,  one has that \eqref{eq: mainestimate} holds for
\begin{equation}\label{eq: me2}
C(\|u_{0}\|_{X})=C\exp (\|u_{0}\|_{X}^{\beta}),	
\end{equation}
for some $\beta>0$.
\end{thm}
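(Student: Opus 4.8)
The plan is to combine Duhamel's formula with the dispersive estimate \eqref{eq: dispersiveclassical}, isolating the nonlinear self-interaction near the diagonal time $\tau=t$, and to exploit the finite-variance hypothesis $xu_0\in L^2$ through the Galilean vector field. Writing
\[
u(t)=e^{it\Delta}u_0-i\int_0^t e^{i(t-\tau)\Delta}\big(|u|^2u\big)(\tau)\,d\tau ,
\]
the linear term obeys $\|e^{it\Delta}u_0\|_{L^\infty}\lesssim|t|^{-3/2}\|u_0\|_{L^1}$, which in particular already yields $u(t)\in L^\infty$ for $t>0$ — not automatic, since $H^s\not\hookrightarrow L^\infty$ when $s<\tfrac32$ (the $L^\infty$-bound on the Duhamel term for $t>0$ follows from a standard Strichartz bootstrap using the $L^1$ datum). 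As for the nonlinear term, the role of $xu_0\in L^2$ is the following: the operator $J(t)=x+2it\nabla$ commutes with $i\partial_t+\Delta$ and factors as $J(t)=2it\,e^{i|x|^2/4t}\nabla\!\big(e^{-i|x|^2/4t}\,\cdot\,\big)$, and the pseudoconformal energy $\mathcal E_{pc}(t)=\tfrac12\|J(t)u(t)\|_{L^2}^2+\tfrac12 t^2\|u(t)\|_{L^4}^4$ is non-increasing on $[0,\infty)$ for the \emph{defocusing} (mass-supercritical) NLS, so that $\|J(t)u(t)\|_{L^2}\le\|xu_0\|_{L^2}$ for all $t\ge0$. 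Together with the factorization and the Gagliardo--Nirenberg inequality in $\RRR^3$, this gives the weighted decay
\begin{equation}\label{eq: plansketch-wdecay}
\|u(t)\|_{L^q}\lesssim|t|^{-3(\frac12-\frac1q)}\,\|u_0\|_{L^2}^{1-\theta}\,\|xu_0\|_{L^2}^{\theta},\qquad\theta=3\big(\tfrac12-\tfrac1q\big),\quad 2\le q\le 6 ,
\end{equation}
and in particular $\|u(t)\|_{L^3}^3\lesssim|t|^{-3/2}$ and $\|u(t)\|_{L^b}^2\lesssim|t|^{-2+\eps}$ for $b$ slightly below $6$.

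I would then estimate $\|u(t)\|_{L^\infty}$ for $t\ge2$ by splitting the Duhamel integral as $\int_0^1+\int_1^{t/2}+\int_{t/2}^{t-1}+\int_{t-1}^t$. On $\int_1^{t/2}$ one has $|t-\tau|\sim t$, so the integrand is $\lesssim t^{-3/2}\|u(\tau)\|_{L^3}^3\lesssim t^{-3/2}\tau^{-3/2}$ by \eqref{eq: dispersiveclassical} and \eqref{eq: plansketch-wdecay}, with convergent $\tau$-integral; on $\int_{t/2}^{t-1}$, with $s=t-\tau\in[1,t/2]$ and $t-s\ge t/2$, the integrand is $\lesssim s^{-3/2}\|u(t-s)\|_{L^3}^3\lesssim s^{-3/2}t^{-3/2}$, again summable. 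On $\int_0^1$ the time singularity is harmless but \eqref{eq: plansketch-wdecay} is not integrable at $\tau=0$, so I would there use $\|u(\tau)\|_{L^3}\lesssim\|u(\tau)\|_{H^{1/2}}\le\|u(\tau)\|_{H^s}$ and absorb $\int_0^1\|u(\tau)\|_{H^s}^3\,d\tau\lesssim\sup_{[0,1]}\|u\|_{H^s}^3$. The delicate piece is $\int_{t-1}^t$, where $|t-\tau|^{-3/2}$ is not integrable: here the $L^1\!\to\!L^\infty$ bound is replaced by the fractional estimate $\|e^{is\Delta}F\|_{L^\infty}\lesssim|s|^{-(1-\eps)}\|\langle D\rangle^\sigma F\|_{L^{r'}}$ with $r<6$ and $\sigma$ slightly above $\tfrac12$ (so that $W^{\sigma,r}\hookrightarrow L^\infty$ and $3(\tfrac12-\tfrac1r)<1$), applied to $F=|u|^2u$; a fractional Leibniz rule gives $\|\langle D\rangle^\sigma(|u|^2u)(\tau)\|_{L^{r'}}\lesssim\|u(\tau)\|_{W^{\sigma,a}}\|u(\tau)\|_{L^b}^2\lesssim\|u(\tau)\|_{H^s}\,\|u(\tau)\|_{L^b}^2$ with $a$ close to $2$ and $b$ slightly below $6$, which by \eqref{eq: plansketch-wdecay} is $\lesssim\|u(\tau)\|_{H^s}\,\tau^{-2+\eps}$. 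Since $\tau=t-s\ge t/2$ here and $\int_0^1|s|^{-(1-\eps)}ds<\infty$, this piece is $\lesssim t^{-2+\eps}\sup_{\tau\ge t-1}\|u(\tau)\|_{H^s}\lesssim t^{-3/2}$ once the $H^s$-norm is controlled in $\tau$. The constraint $\sigma>\tfrac12$ forced here is exactly why the argument needs $s>\tfrac12$; the range $0<t<2$ is handled separately by the same weighted-and-dispersive mechanism, again invoking the $H^s$ theory near $\tau=0$.

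Assembling the four contributions yields, for $t\ge2$ (and after the small-$t$ analysis, for all $t>0$), $\|u(t)\|_{L^\infty}\le C|t|^{-3/2}$ with $C$ depending only on $\|u_0\|_{L^1}$, $\|u_0\|_{L^2}$, $\|xu_0\|_{L^2}$, and the quantities $\sup_{t\in[0,1]}\|u\|_{H^s}$ and $\sup_{t\ge0}\|u(t)\|_{H^s}$. All of these are polynomial in $\|u_0\|_X=\|u_0\|_{H^s\cap L^1}+\|xu_0\|_{L^2}$ \emph{except} the two $H^s$-bounds, and this is where I expect the main difficulty: there is no conservation law at the $H^s$-level below $H^1$, the problem is $\dot H^{1/2}$-critical so the local theory furnishes only a lifespan $\gtrsim\|u_0\|_{H^s}^{-\alpha(s)}$ with $\alpha(s)\to\infty$ as $s\to\tfrac12^+$, and the global-in-time control of $\|u(t)\|_{H^s}$ (equivalently, of the global Strichartz norm of $u$) has to be imported from the quantitative global well-posedness and scattering theory for the $3$D cubic defocusing NLS. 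Iterating the subcritical local theory to cover a time window of length polynomial in $\|u_0\|_X$, together with this scattering input, is what degrades the constant to $C\exp\big(C\|u_0\|_X^{\beta}\big)$ with some $\beta=\beta(s)>0$; by contrast, in the $H^1$ setting of Theorem~\ref{thm2} these bounds are furnished — polynomially — by energy conservation, which is precisely why there the constant is polynomial. Up to these low-regularity inputs the argument runs in parallel with the proof of Theorem~\ref{thm2}.
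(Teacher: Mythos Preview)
Your dispersive argument is sound and in fact more direct than the paper's: by feeding the pseudo-conformal $L^q$ decay straight into the Duhamel integrand (via $\|N(u)\|_{L^1}\le\|u\|_{L^3}^3\lesssim\tau^{-3/2}$), you bypass the bootstrap quantity $A(t)=\sup_{\tau\le t}\tau^{3/2}\|u(\tau)\|_{L^\infty}$ that the paper carries over from its Theorem~\ref{thm1} framework, and your fixed four-piece split replaces the paper's $M$-dependent decomposition (with $M$ chosen exponentially large in the data). The near-diagonal Sobolev-then-dispersive device you use on $\int_{t-1}^t$ is exactly the paper's $F_3$ mechanism. What the bootstrap buys the paper is a uniform scheme across Theorems~\ref{thm1}--\ref{thm3}; what your approach buys is simplicity once the weighted decay is in hand.

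There is, however, a real gap where you ``import'' the global $H^s$ bound. You correctly flag this as the crux, but the resolution is \emph{not} an off-the-shelf GWP/scattering result: global well-posedness of \eqref{eq: cubicnls} in $H^s$ for $\tfrac12<s<1$ without the weighted hypothesis is open, so there is nothing to cite, and your description (iterating the subcritical local theory ``together with this scattering input'') is circular absent an a priori bound. The paper uses $xu_0\in L^2$ a \emph{second} time --- not only for the $L^q$ decay --- by applying the pseudo-conformal transformation itself: setting $\tilde u(s,x)=(-s)^{-3/2}e^{-i|x|^2 s/4}\,u(-1/s,\,x/(-s))$ sends $u$ on $[\delta,\infty)$ to a solution of $i\partial_s\tilde u+\Delta\tilde u=(-s)|\tilde u|^2\tilde u$ on the finite window $[-1/\delta,0)$, and the finiteness of $\|xu_0\|_{L^2}$ is precisely the $H^1$ control on $\tilde u$ needed to run the energy-level theory there. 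This yields a polynomial bound on $\|u\|_{L^5_{t,x}}$, after which persistence of regularity gives $\sup_t\|u(t)\|_{H^s}\lesssim\exp(M_1^{\beta_0})$, the source of the exponential constant. Your pseudo-conformal energy monotonicity already presupposes global existence (and, strictly, an $H^1$-level computation), so as written you are invoking the weighted hypothesis once where it must do double duty.
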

\begin{rem}
We remark that, to reach the endpoint case of Theorem \ref{thm3}, i.e. to reach the  initial data $u_{0}\in H^{\frac{1}{2}}\cap L^{1}$ and $xu_{0}\in L^{2}$, would  be as hard as to reach the (quantitative) global well-posedness for \eqref{eq: cubicnls} with initial data in $H^{1/2}$, which is a  major open problem. See however Dodson's recent works, \cite{dodson2021global}, \cite{dodson2021scattering}. Indeed, it is also open to prove global well-posedness  of \eqref{eq: cubicnls} with $H^{s}, s>\frac{1}{2}$ initial data. We will briefly point out in the proof of Theorem \ref{thm3} why $xu_{0}\in L^{2}$ is  of help here, i.e. it is not hard to prove GWP for initial data such that $\|u_{0}\|_{H^{s}}+\|xu_{0}\|_{2}<\infty, s>1/2$.
\end{rem}

Sometimes, it is not favorable to have the $L^{1}$ condition. We remark  below that one can remove the $L^{1}$ assumption in Theorem \ref{thm1}-\ref{thm3} by performing  a randomization in physical space for the initial data. 
To be more precise, 
let $\phi_{n}(x):=\phi(x-n), n \in \mathbb{Z}^{3}$ be a partition  of unity of $\mathbb{R}^{3}$
\begin{equation}
1=\sum_{n}\phi_{n}=\sum_{n}\phi(x-n).
\end{equation}
 Let $g_{n}(\omega)$ be i.i.d. standard Gaussian, 
 and let
\begin{equation}
u^{\omega}_{0}(x)=\sum \phi_{n}(x)g_{n}(\omega)u_{0}(x).
\end{equation}
We recall  that randomization in physical space was also used in \cite{murphy2019random}, see also \cite{bringmann2021almost}.

We assume $\|u_{0}\|_{X}=1$, where $X=H^{1}$ or $H^{1}\cap \|xu_{0}\|_{L^2}$ or $H^{s}\cap \|xu_{0}\|_{L^2}$. Note that the size of data can be absorbed in the Gaussian. We then have
\begin{thm}\label{thm4}
Consider  the initial value problem \eqref{eq: cubicnls} with randomized initial data $u_{0}^{\omega}$. Let $u^{\omega}$ be the associated global solutions. Let $u^{\omega}_{nl}=u^{\omega}-e^{it\Delta}u_{0}^{\omega}$.
\begin{itemize}
\item If $\|u_{0}\|_{H^{1}}\leq 1$, then except for a  small probability set of size $\exp A^{-\alpha}$, one has 
\begin{equation}\label{eq: thm1ran}
\|u_{nl}^{\omega}\|_{L_{x}^{\infty}}\leq C(\exp A^{\beta})|t|^{-\frac{3}{2}},	
\end{equation}
for some $\alpha, \, \beta>0$.
\item If $\|u_{0}\|_{H^{1}}+\|xu_{0}\|_{L^2_x}\leq 1$, then except for a  small probability set of size $\exp A^{-\alpha}$, one has 
\begin{equation}\label{eq: thm2ran}
	\|u_{nl}^{\omega}\|_{L_{x}^{\infty}}\leq C(1+A)^{\beta}|t|^{-\frac{3}{2}},
\end{equation}
for some $\alpha, \, \beta>0$.
\item If $\|u_{0}\|_{H^{s}}+\|xu_{0}\|_{L^2}\leq 1$, 
$s>\frac{1}{2}$, then up to small probability $\exp A^{-\alpha}$, one has 
\begin{equation}\label{eq: thm3ran}
\|u_{nl}^{\omega}(t)\|_{L_{x}^{\infty}}\leq C(\exp A^{\beta})|t|^{-\frac{3}{2}},
\end{equation}
for some $\alpha, \, \beta>0$.
\end{itemize}

\end{thm}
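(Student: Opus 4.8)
The plan is to derive Theorem \ref{thm4} from the deterministic Theorems \ref{thm1}--\ref{thm3} by a large-deviation argument, \emph{provided} one can show that the randomized data $u_0^\omega$ inherits—with overwhelming probability—the relevant norm bounds that were used deterministically, while \emph{additionally} gaining an $L^1$ bound coming purely from the randomization. The key structural observation is that the randomization $u_0^\omega(x)=\sum_n \phi_n(x)g_n(\omega)u_0(x)$ does not improve Sobolev regularity, but it does convert square-summable local norms into an almost sure $L^2_x$ (hence, combined with finite-range localization, $L^1_x$) control after multiplication by the fast-decaying Gaussian tails. Concretely, I would first record the pointwise-in-$\omega$ bounds: for each fixed $p$ and each $s$ in the range under consideration,
\begin{equation}\label{eq: randnorm}
\big\| \|u_0^\omega\|_{H^s} \big\|_{L^p_\omega} \lesssim \sqrt{p}\, \|u_0\|_{H^s}, \qquad \big\| \|u_0^\omega\|_{L^1} \big\|_{L^p_\omega} \lesssim \sqrt{p}\, \Big(\sum_n \|\phi_n u_0\|_{L^2}\Big),
\end{equation}
and similarly for the weighted norm $\|x u_0^\omega\|_{L^2}$; these follow from Minkowski's inequality in $L^p_\omega$, the Khintchine/Gaussian moment bound $\|\sum_n g_n c_n\|_{L^p_\omega}\lesssim \sqrt p\,(\sum |c_n|^2)^{1/2}$, and the almost-orthogonality of the $\phi_n$. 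The point is that $\sum_n \|\phi_n u_0\|_{L^2} \lesssim \|u_0\|_{H^1}$ (or even $\|u_0\|_{L^2}$ plus a mild weight) is finite although $u_0$ itself need not be in $L^1$; this is exactly the mechanism by which randomization substitutes for the $L^1$ hypothesis.

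Second, I would Chebyshev these moment bounds into tail estimates. Choosing $p\sim A^{2/\beta'}$ for a suitable small power, \eqref{eq: randnorm} yields that outside an exceptional set $\Omega_A$ with $\mathbb P(\Omega_A)\le \exp(-A^\alpha)$ one has $\|u_0^\omega\|_{X}\lesssim A$, where $X$ is the relevant deterministic norm ($H^1\cap L^1$ for the first bullet, $H^1\cap L^1$ together with $xu_0\in L^2$ for the second, $H^s\cap L^1$ together with $xu_0\in L^2$ for the third). On the complement of $\Omega_A$, the data $u_0^\omega$ is admissible for the corresponding deterministic theorem, so the global solution $u^\omega$ satisfies $\|u^\omega(t)\|_{L^\infty_x}\le C(\|u_0^\omega\|_X)|t|^{-3/2}$ with the constant as in \eqref{eq: me1}, \eqref{eq: me2} respectively, i.e. $\exp(CA^\beta)$ in the first and third cases and $(1+A)^\beta$ in the second. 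Here I am using that the constants in Theorems \ref{thm1}--\ref{thm3} depend only on the size of $u_0^\omega$ in $X$, which is the whole emphasis of those theorems.

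Third, to pass from $\|u^\omega\|_{L^\infty_x}$ to the stated bound on the genuinely nonlinear part $u_{nl}^\omega = u^\omega - e^{it\Delta}u_0^\omega$, I would simply subtract off the linear flow, using that $e^{it\Delta}u_0^\omega$ obeys the free dispersive estimate $\|e^{it\Delta}u_0^\omega\|_{L^\infty_x}\lesssim |t|^{-3/2}\|u_0^\omega\|_{L^1}\lesssim |t|^{-3/2}A$ on the good event, with the same $|t|^{-3/2}$ rate and a constant linear (hence better than $\exp(CA^\beta)$ or $(1+A)^\beta$) in $A$. Combining, $\|u_{nl}^\omega(t)\|_{L^\infty_x}\le \|u^\omega(t)\|_{L^\infty_x} + \|e^{it\Delta}u_0^\omega\|_{L^\infty_x}$ gives \eqref{eq: thm1ran}, \eqref{eq: thm2ran}, \eqref{eq: thm3ran} with the claimed constants. (The reason the statement is phrased for $u_{nl}^\omega$ rather than $u^\omega$ is a minor cosmetic point; both satisfy the same bound.)

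The main obstacle I anticipate is not the probabilistic bookkeeping—which is standard—but verifying that the deterministic machinery of Theorems \ref{thm1}--\ref{thm3} genuinely applies to $u_0^\omega$ with constants depending only on $\|u_0^\omega\|_X$; in particular for the third bullet one needs the $H^s$, $s>1/2$, global well-posedness-with-quantitative-bounds input to be robust under the (rough, merely $H^s$) randomized data, and one must check that the weighted bound $\|x u_0^\omega\|_{L^2}$—which is what makes the low-regularity argument work—is controlled by $\|x u_0\|_{L^2}$ plus lower-order terms after randomization. A secondary technical point is optimizing the exponents $\alpha,\beta$: the power $p$ traded off in Chebyshev couples the size $A$ of the admissible data to the probability $\exp(-A^\alpha)$, and one has to make sure the $\sqrt p$ loss in \eqref{eq: randnorm} is compatible with the polynomial-in-$A$ (resp.\ $\exp(A^\beta)$) form demanded in the conclusion; this is a routine but slightly delicate balancing that I would carry out at the end.
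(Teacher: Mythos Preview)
Your reduction has a genuine gap: physical-space randomization does \emph{not} place $u_0^\omega$ in $L^1$ with high probability when $u_0$ is merely in $H^1$ (or $H^s$). The estimate you wrote,
\[
\big\|\,\|u_0^\omega\|_{L^1}\,\big\|_{L^p_\omega}\ \lesssim\ \sqrt{p}\,\sum_n\|\phi_n u_0\|_{L^2},
\]
is fine, but the next sentence---that $\sum_n \|\phi_n u_0\|_{L^2}\lesssim \|u_0\|_{H^1}$---is false. That sum is an $\ell^1$-sum of unit-cube $L^2$ masses, whereas $\|u_0\|_{L^2}$ controls only the $\ell^2$-sum; a function spread over $N$ unit cubes with $\|u_0\|_{L^2}=1$ has $\sum_n\|\phi_n u_0\|_{L^2}\sim N^{1/2}\to\infty$. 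So you cannot feed $u_0^\omega$ into Theorems~\ref{thm1}--\ref{thm3} as $L^1$ data, and consequently the subtraction step $\|e^{it\Delta}u_0^\omega\|_{L^\infty_x}\lesssim |t|^{-3/2}\|u_0^\omega\|_{L^1}$ is unavailable as well. (This, incidentally, is why the theorem is stated for $u_{nl}^\omega$ rather than $u^\omega$: the linear piece genuinely need not enjoy the pointwise $t^{-3/2}$ bound.)

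The paper's proof works around exactly this obstruction. It proves (Lemma~\ref{lem: ardis}) that $A$-certainly the linear flow obeys a \emph{time-averaged} dispersive estimate,
\[
\big\|\gamma_{p,\epsilon_1}(t)\,e^{it\Delta}u_0^\omega\big\|_{L_t^{1/\epsilon_2}L_x^p}\ \le\ A,\qquad \gamma_{p,\epsilon}(t)\sim t^{3(\frac12-\frac1p)-\epsilon}\ \text{for }t\gg1,
\]
obtained by Khintchine plus Minkowski and the pointwise dispersive estimate applied to each localized piece $\phi_n u_0$ (where $\|\phi_n u_0\|_{L^{p'}}\lesssim\|\phi_n u_0\|_{L^2}$ and one only needs the $\ell^2$-sum). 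Since the linear part $v=e^{it\Delta}u_0^\omega$ enters the analysis only through Duhamel time integrals, this time-averaged decay substitutes for the missing $L^1$ bound. The paper then reruns the bootstrap of Theorem~\ref{thm1} for $A(t)=\sup_{\tau\le t}\tau^{3/2}\|w(\tau)\|_{L^\infty_x}$ with $w=u_{nl}^\omega$, expanding $|v+w|^2(v+w)$ and estimating the pure-$v$ and cross terms via Lemma~\ref{lem: ardis} (Lemmas~\ref{lem: g1}, \ref{lem: g2}) and the pure-$w$ term as in Theorem~\ref{thm1}. In short, the proof is not a black-box reduction to the deterministic theorems; it reproves them with the $L^1$ hypothesis replaced by the weaker, random, time-averaged input.
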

\begin{rem}
Regarding the linear evolution of a  random data as defined above, though we cannot obtain the pointwise estimate for them,  they behave in time average like a linear evolution of $L^{1}$ data. Indeed, that is  why one can remove the $L^{1}$ assumptions by doing randomization in physical space. 
\end{rem}
  It is also possible to perform  randomization in frequency space  to lower the regularity assumption. This is a very active research field ever since the seminal work of Bourgain, \cite{bourgain1994periodic}, \cite{bourgain1996invariant} in the periodic setting, see also the recent breakthrough \cite{deng2022random} and references therein. We do not  discuss this issue here.

\subsection{A technical remark}
In this subsection, we briefly explain why we can lower the regularity assumption in this paper (see the proofs in the following sections for more details). For convenience, we compare Theorem \ref{thm1} ($H^1$-regularity assumption) in this paper with Theorem 1.4 ($H^4$-regularity assumption) in \cite{fan2021decay}.

As in \cite{fan2021decay}, we decompose the nonlinear solution $u$ into several parts and we want to control all of them since we intend to use a bootstrap argument to show the decay estimate. The most non-trivial term is:
\begin{equation}
F_3=i\int_{t-M}^t e^{i(t-s)\Delta}(|u|^{2}u)(s) ds,    
\end{equation}
where $M$ is a positive constant depending on the size of  the initial data. This term is part of the Duhamel expression  of the nonlinear solution when the integral is close to $t$.

We intend to control the $L^{\infty}_x$-norm of $F_3$. However, to control this term, higher regularity ($H^4$-regularity) is required in \cite{fan2021decay}. The reason is that, if one uses the dispersive estimate directly, one has
\begin{align}
  \big\| F_3  \big\|_{L_x^{\infty}} &= \big\| \int_{t-M}^t e^{i(t-s)\Delta}|u|^2uds  \big\|_{L_x^{\infty}} \\
  &\lesssim \int_{t-M}^t \big\|  e^{i(t-s)\Delta}|u|^2u \big\|_{L_x^{\infty}}ds \\
  &\lesssim \int_{t-M}^t   (t-s)^{-\frac{3}{2}} \big\| |u|^2u \big\|_{L_x^{1}}ds \\  
  &\lesssim \int_{t-M}^t   (t-s)^{-\frac{3}{2}} \big\| u \big\|^3_{L_x^{3}}ds. \\  
\end{align}
We note that, for the integral over $[t-M,t]$, the $(t-s)^{-\frac{3}{2}}$-term is too singular (non-integrable), thus it is not possible to control $\big\| F_3  \big\|_{L_x^{\infty}}$ by $ct^{-\frac{3}{2}}$ in this way, for some $c$ small. In order to solve this issue, one can control the $L^{\infty}_x$-norm of $F_3$ by estimating its $H^4$, $\dot{H}^1$ and $L^2$-norms (see the proof of Theorem 1.4 in \cite{fan2021decay} for details). However, doing this will inevitably cause higher-regularity requirement .

For the current paper, we now observe that, using a trick (Sobolev  and H\"older inequalities), we can control the $L^{\infty}_x$-norm of $F_3$ in the following way,
\begin{align*}
   \big\| F_3  \big\|_{L_x^{\infty}} &=\big\| \int_{t-M}^t e^{i(t-s)\Delta}|u|^2uds  \big\|_{L_x^{\infty}} \\
  &\lesssim \big\| \int_{t-M}^t e^{i(t-s)\Delta}|\nabla_x|(|u|^2u)ds  \big\|_{L_x^{3}} \\
  &\lesssim \int_{t-M}^t \big\|  e^{i(t-s)\Delta}|\nabla_x|(|u|^2u) \big\|_{L_x^{3}} ds \\
  &\lesssim \int_{t-M}^t (t-s)^{3(\frac{1}{2}-\frac{1}{3})}\big\|  |\nabla_x|(|u|^2u) \big\|_{L_x^{\frac{3}{2}}} ds \\
  &\lesssim \big( \int_{t-M}^t ((t-s)^{3(\frac{1}{2}-\frac{1}{3})})^{2-}ds \big)^{\frac{1}{2-}} \cdot \big( \int_{t-M}^t (\big\|  |\nabla_x|(|u|^2u) \big\|_{L_x^{\frac{3}{2}}})^{2+} ds \big)^{\frac{1}{2+}}.    
\end{align*}
We can see that, the advantage of performing  the Sobolev inequality first is to lower the spatial exponent for the integrand from $\infty$ to $3$ thus after using the dispersive estimate, we end up with $(t-s)^{-\frac{1}{2}}$ which is  not too singular near $t$ (i.e. it is integrable). Then, applying  H\"older inequality, it suffices to deal with the second term in the last line above, which is manageable. Thus, one can handle the $F_3$-term based on the scattering result. We refer to Section \ref{3}, Section \ref{4} and Section \ref{5}  for more proof details. 

This observation/trick is proved to be useful to lower the regularity requirement for many cases when one considers  nonlinear decay problems. We also note that similar ideas are useful for studying the long time dynamics for the stochastic NLS, see the recent works \cite{fan2020long,fan2021long} for more information.

\subsection{Structure of the paper}
The rest of the article is organized as follows. In Section \ref{pre}, we include the basic estimates and the global results for 3D cubic NLS; in Section \ref{3}, we give the proof for Theorem \ref{thm1} and Theorem \ref{thm2}; in Section \ref{4}, we give the proof for Theorem \ref{thm3}; in Section \ref{5}, we give the proof for Theorem \ref{thm4}; in Section \ref{6}, we give some further remarks on the applications of this method for other models.

\subsection*{Acknowledgment} C. F. was partially supported by the  National Key R\&D Program of China, 2021YFA1000800, CAS Project for Young Scientists in Basic Research, Grant No. YSBR-031, and NSFC grant No.11688101. G.S. is  funded in part by  the NSF grants DMS-1764403, DMS-2052651 and the Simons Foundation through the Simons Collaboration on Wave Turbulence. Z. Z. was supported by the NSF grant of China No. 12101046, 12271032,  and the Beijing Institute of Technology Research Fund Program for Young Scholars. 
\section{Preliminaries}\label{pre}
In this section, we collect some useful results and estimates, including the standard dispersive estimate and  Strichartz estimates for Schr\"odinger equations and the global well-posedness theory for the 3D cubic NLS.

The standard dispersive estimates and Strichartz estimates for the free Schr\"odinger operator $e^{it\Delta}$ when $d=3$ reads as follows. We refer to \cite{cazenave2003semilinear,tao2006nonlinear} for details. 

\begin{lem} [Dispersive estimate]
\label{lem:dispersive}
The linear operator $e^{it\Delta}$ in $\mathbb{R}^{3}$ satisfies the bound
\begin{equation*}
\|e^{it\Delta} f\|_{L_{x}^{\infty}} \lesssim |t|^{-\frac{3}{2}}\|f\|_{L_{x}^{1}}.
\end{equation*}
Moreover, by interpolation with the unitary relation $\|e^{it\Delta} f\|_{L_x^2} = \|f\|_{L_x^2}$, we have
\begin{equation} \label{eq:dispersive}
\|e^{it\Delta} f\|_{L_{x}^{p}} \lesssim |t|^{-d(\frac{1}{2}-\frac{1}{p})}\|f\|_{L_{x}^{p'}}
\end{equation}
for every $p  \geq 2$. 
\end{lem}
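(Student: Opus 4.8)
\textbf{Proof proposal for Lemma \ref{lem:dispersive}.}

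The plan is to establish the $L^1\to L^\infty$ bound by an explicit computation with the Schr\"odinger kernel, and then obtain the $L^{p'}\to L^p$ range by interpolating against the $L^2$ isometry. First I would recall that for $f$ in the Schwartz class the solution of the free Schr\"odinger equation has the convolution representation
\begin{equation}
(e^{it\Delta}f)(x) = \frac{1}{(4\pi i t)^{3/2}} \int_{\mathbb{R}^3} e^{i|x-y|^2/(4t)} f(y)\, dy,
\end{equation}
which one checks either by writing $e^{it\Delta}$ via the Fourier transform and evaluating the resulting Gaussian integral $\int_{\mathbb{R}^3} e^{-it|\xi|^2 + i x\cdot\xi}\, d\xi$ (a contour/analytic-continuation argument from the convergent heat-kernel case $t \mapsto t - i\varepsilon$, $\varepsilon \downarrow 0$), or by verifying directly that the stated kernel solves the equation with the correct initial data as $t \to 0$. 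The phase factor $(4\pi i t)^{3/2}$ has modulus $(4\pi |t|)^{3/2}$.

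Given the kernel representation, the first bound is immediate: since the kernel has modulus $(4\pi|t|)^{-3/2}$ pointwise,
\begin{equation}
\|e^{it\Delta}f\|_{L^\infty_x} \le \frac{1}{(4\pi|t|)^{3/2}} \int_{\mathbb{R}^3} |f(y)|\, dy \lesssim |t|^{-3/2}\|f\|_{L^1_x},
\end{equation}
which is the first displayed inequality. For general $f \in L^1$ this follows by density of Schwartz functions in $L^1$ together with the fact that the right-hand side is finite, so the estimate passes to the limit. The second bound uses the elementary unitarity $\|e^{it\Delta}f\|_{L^2_x} = \|f\|_{L^2_x}$ (Plancherel, since $e^{it\Delta}$ acts as multiplication by the unimodular symbol $e^{-it|\xi|^2}$ on the Fourier side). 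Then by the Riesz--Thoren interpolation theorem applied to the linear operator $e^{it\Delta}$, which maps $L^1 \to L^\infty$ with norm $\lesssim |t|^{-3/2}$ and $L^2 \to L^2$ with norm $1$, one gets for each $p \ge 2$, writing $\tfrac1p = \tfrac{\theta}{\infty} + \tfrac{1-\theta}{2}$ so that $\theta = 1 - \tfrac2p$ and the dual exponent is $\tfrac{1}{p'} = \tfrac{\theta}{1} + \tfrac{1-\theta}{2}$,
\begin{equation}
\|e^{it\Delta}f\|_{L^p_x} \lesssim \big(|t|^{-3/2}\big)^{\theta}\cdot 1^{1-\theta}\,\|f\|_{L^{p'}_x} = |t|^{-\frac32(1-\frac2p)}\|f\|_{L^{p'}_x} = |t|^{-d(\frac12 - \frac1p)}\|f\|_{L^{p'}_x}
\end{equation}
with $d=3$, which is exactly \eqref{eq:dispersive}.

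I do not expect a genuine obstacle here, as this is a classical computation; the only place requiring any care is the justification of the kernel formula, i.e. the interchange of the (non-absolutely-convergent) oscillatory Fourier integral with the $y$-integration. This is handled cleanly by the standard regularization $t \rightsquigarrow t - i\varepsilon$: for $\varepsilon > 0$ the operator $e^{(it+\varepsilon)\Delta}$ has the genuinely integrable Gaussian kernel $(4\pi(\varepsilon - it))^{-3/2} e^{-|x-y|^2/(4(\varepsilon-it))}$, for which Fubini applies without issue, and then one lets $\varepsilon \downarrow 0$, using dominated convergence on the Fourier side (the symbols $e^{-(it+\varepsilon)|\xi|^2}$ are uniformly bounded and converge pointwise) to identify the limit with $e^{it\Delta}f$ for Schwartz $f$. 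Everything else is Riesz--Thorsen interpolation and density, both of which are entirely routine.
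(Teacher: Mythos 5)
Your proposal is correct and follows exactly the standard argument the paper has in mind: the explicit kernel formula gives the $L^1\to L^\infty$ bound, and Riesz--Thorin interpolation with the $L^2$ unitarity (which the lemma statement itself indicates) gives the full range $p\ge 2$; the paper simply cites \cite{cazenave2003semilinear,tao2006nonlinear} for these details. The only blemishes are the misspellings of Thorin's name, which have no mathematical content.
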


\begin{definition} \label{def:Strichartz}
	Let $q ,r \in [2, +\infty]$. We say $(q,r)$ is an admissible Strichartz pair in $\mathbb{R}^3$ if
	\begin{equation*}
	\frac{2}{q} + \frac{3}{r} = \frac{3}{2}\;.
	\end{equation*}
\end{definition}

\begin{lem}[Strichartz estimate] \label{lem:strichartz}
Let $(q,r)$ be an admissible Strichartz pair in $\mathbb{R}^3$. Then we have the bound
\begin{equation}
    \|e^{it\Delta} f\|_{L^q_t L^r_x(\mathbb{R}\times \mathbb{R}^3)} \lesssim \|f\|_{L^2(\mathbb{R}^3)}.
\end{equation}
Also, for any two Strichartz pairs $(q_1, r_1)$ and $(q_2,r_2)$, we have
\begin{equation}
    \bigg\| \int_0^t e^{i(t-s)\Delta}F(s)ds \bigg\|_{L^{q_1}_tL^{p_1}_x(\mathbb{R}\times \mathbb{R}^3)} \lesssim \|F\|_{L^{q_{2}^{'}}_{t} L^{p_{2}^{'}}_x(\mathbb{R}\times \mathbb{R}^3)}\;,
\end{equation}
where $q_2'$ and $r_2'$ are conjugates of $q_2$ and $r_2$. 
\end{lem}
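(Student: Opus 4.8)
The plan is the classical $TT^{*}$ argument combined with a fractional integration in time. Set $Tf:=e^{it\Delta}f$, so that its formal adjoint is $T^{*}F=\int_{\mathbb{R}}e^{-is\Delta}F(s)\,ds$ and $TT^{*}F=\int_{\mathbb{R}}e^{i(t-s)\Delta}F(s)\,ds$. By duality, the homogeneous bound $\|Tf\|_{L^{q}_{t}L^{r}_{x}}\lesssim\|f\|_{L^{2}_{x}}$ is equivalent both to the dual statement $\|T^{*}F\|_{L^{2}_{x}}\lesssim\|F\|_{L^{q'}_{t}L^{r'}_{x}}$ and to $\|TT^{*}F\|_{L^{q}_{t}L^{r}_{x}}\lesssim\|F\|_{L^{q'}_{t}L^{r'}_{x}}$, so it suffices to prove the last of these.

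First I would establish the $TT^{*}$ bound away from the endpoint. For fixed $s$, the interpolated dispersive estimate \eqref{eq:dispersive} (which applies since admissibility forces $r\ge 2$) gives $\|e^{i(t-s)\Delta}F(s)\|_{L^{r}_{x}}\lesssim|t-s|^{-\gamma}\|F(s)\|_{L^{r'}_{x}}$ with $\gamma:=3(\tfrac12-\tfrac1r)$. Taking the $L^{r}_{x}$-norm in space, pulling it inside the $s$-integral by Minkowski's integral inequality, and then applying the one-dimensional Hardy--Littlewood--Sobolev inequality with kernel $|t-s|^{-\gamma}$ in the $t$-variable yields $\|TT^{*}F\|_{L^{q}_{t}L^{r}_{x}}\lesssim\|F\|_{L^{q'}_{t}L^{r'}_{x}}$ precisely when $0<\gamma<1$ and $\tfrac{2}{q}=\gamma$. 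The latter is exactly the admissibility relation $\tfrac2q+\tfrac3r=\tfrac32$, and $0<\gamma<1$ holds for $2<r<6$, i.e. $2<q<\infty$. This settles the homogeneous estimate, hence also its dual, for every admissible pair except the endpoint $(q,r)=(2,6)$ (the case $r=2$, $q=\infty$ being just the unitarity relation).

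Next, for the inhomogeneous estimate with two (possibly distinct) admissible pairs $(q_{1},r_{1})$ and $(q_{2},r_{2})$: composing the homogeneous bound for $(q_{1},r_{1})$ with the dual bound for $(q_{2},r_{2})$ gives
\[
\Big\|\int_{\mathbb{R}}e^{i(t-s)\Delta}F(s)\,ds\Big\|_{L^{q_{1}}_{t}L^{r_{1}}_{x}}=\|T(T^{*}F)\|_{L^{q_{1}}_{t}L^{r_{1}}_{x}}\lesssim\|T^{*}F\|_{L^{2}_{x}}\lesssim\|F\|_{L^{q_{2}'}_{t}L^{r_{2}'}_{x}}.
\]
To replace the full integral by the retarded one $\int_{0}^{t}$, on the diagonal $(q_{1},r_{1})=(q_{2},r_{2})$ (non-endpoint) one notes that the retarded kernel is pointwise dominated by $|t-s|^{-\gamma}\mathbf{1}_{(0,\infty)}(s)$ and re-runs the Hardy--Littlewood--Sobolev step directly; off the diagonal one invokes the Christ--Kiselev lemma, applicable since $q_{1}>q_{2}'$ whenever the two time exponents differ. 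Only the double-endpoint inhomogeneous case is left aside at this level.

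The endpoint $(q,r)=(2,6)$ is the main obstacle: here the Hardy--Littlewood--Sobolev step is at the forbidden critical exponent and Christ--Kiselev degenerates, so the elementary scheme breaks down. For this one must instead run the Keel--Tao bilinear interpolation argument — dyadically splitting the propagator according to $|t-s|\sim 2^{j}$, proving an $\varepsilon$-gain bilinear estimate on each dyadic block by interpolating the unitary $L^{2}\to L^{2}$ bound against the dispersive $L^{1}\to L^{\infty}$ bound, and summing the blocks via a real-interpolation/atomic argument, which also yields the double-endpoint inhomogeneous bound. Since the present paper uses these Strichartz estimates only as a black box, I would simply cite this endpoint input from the standard references \cite{cazenave2003semilinear,tao2006nonlinear} rather than reproduce it.
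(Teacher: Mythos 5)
Your sketch is correct: the non-endpoint homogeneous bound via $TT^{*}$, dispersive estimate and Hardy--Littlewood--Sobolev (with $2/q=3(\tfrac12-\tfrac1r)$ exactly matching admissibility), the retarded/off-diagonal inhomogeneous bound via Christ--Kiselev, and the endpoint $(q,r)=(2,6)$ delegated to the Keel--Tao argument is the standard proof. The paper itself does not prove this lemma at all --- it is stated as a preliminary and cited to \cite{cazenave2003semilinear,tao2006nonlinear} --- and your argument is precisely the one contained in those references, so there is nothing to reconcile.
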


Then we turn to the global theory for 3D cubic NLS \eqref{eq: cubicnls}. If one works with  initial data in $H^{1}$, the scattering result is indeed easier.
Much stronger low regularity results holds for equation \eqref{eq: cubicnls}, \cite{colliander2004global}, \cite{kenig2010scattering}, see also \cite{dodson2020global} and reference therein.

As a corollary of lower regularity results \cite{colliander2004global}, \cite{kenig2010scattering}, one has 
\begin{prop}
The initial value problem \eqref{eq: cubicnls} is globally well-posed and scatters in the  $H^1$ space. More precisely, for any $u_0$ with finite energy, $u_{0}\in H^{1}$, there exists a
unique global solution $u\in C^0_t({H}^1_x)\cap L^{5}_{x,t}$ such that
\begin{equation}
    \int_{-\infty}^{\infty} \int_{\mathbb{R}^3}|u(t,x)|^{5}dxdt\leq C(\|u_{0}\|_{H^{1}}),
\end{equation}
for some constant $C(\|u_{0}\|_{H^{1}})$ that depends only on $\|u_{0}\|_{H^{1}}$.
More precisely, $C(\|u_{0}\|_{H^{1}})$ is a polynomial of $H^{1}$-norm of the initial data.
\end{prop}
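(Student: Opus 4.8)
The plan is as follows. Global well-posedness in $H^{1}$ is classical and, as noted, is in any case contained in the subcritical results cited above: local well-posedness follows from the Strichartz estimates of Lemma~\ref{lem:strichartz} by a contraction in a Strichartz norm, and the defocusing sign makes the conserved energy coercive, so that together with mass conservation one gets the a priori bound $\sup_{t}\|u(t)\|_{H^{1}}\le C(\|u_{0}\|_{H^{1}})$ which precludes finite‑time blow‑up. The only point needing comment is the \emph{polynomial} dependence of the global spacetime norm on $\|u_{0}\|_{H^{1}}$, and the route I would take is the interaction Morawetz estimate plus an interval‑subdivision argument.

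First I would quote the interaction Morawetz inequality of \cite{colliander2004global} in $\mathbb{R}^{3}$, which yields an a priori bound of the schematic form
\begin{equation*}
\|u\|_{L^{4}_{t,x}(\mathbb{R}\times\mathbb{R}^{3})}^{4}\lesssim \|u\|_{L^{\infty}_{t}L^{2}_{x}}^{a}\,\|u\|_{L^{\infty}_{t}\dot{H}^{1/2}_{x}}^{b}
\end{equation*}
for appropriate powers $a,b>0$. By conservation of mass and energy together with $\|u(t)\|_{\dot{H}^{1/2}}\le \|u(t)\|_{L^{2}}^{1/2}\|\nabla u(t)\|_{L^{2}}^{1/2}$, each factor on the right is bounded by a fixed power of $\|u_{0}\|_{H^{1}}$, so $\|u\|_{L^{4}_{t,x}(\mathbb{R})}\le P(\|u_{0}\|_{H^{1}})$ for some polynomial $P$. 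Next I would partition $\mathbb{R}$ into $J\lesssim (P(\|u_{0}\|_{H^{1}})/\eta)^{4}$ consecutive intervals $I_{j}$ on each of which $\|u\|_{L^{4}_{t,x}(I_{j})}\le \eta$, $\eta$ a small absolute constant; on each $I_{j}$, Duhamel from the left endpoint together with the Strichartz estimate, the smallness of $\|u\|_{L^{4}_{t,x}(I_{j})}$, and the uniform energy bound on $\|u(t)\|_{H^{1}}$ close a standard continuity argument controlling all admissible Strichartz norms of $\langle\nabla\rangle u$ on $I_{j}$ by $C(\|u_{0}\|_{H^{1}})$. Summing the $\ell^{q}$ pieces over the polynomially many intervals gives global control of $\|u\|_{S^{1}(\mathbb{R})}$ by a polynomial in $\|u_{0}\|_{H^{1}}$.

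Finally I would extract the $L^{5}_{t,x}$ bound: the pair $(20/9,5)$ is admissible, since $\tfrac{2}{20/9}+\tfrac{3}{5}=\tfrac32$, so $u\in L^{20/9}_{t}L^{5}_{x}$ globally with polynomial bound, and interpolating with the energy bound $\|u\|_{L^{\infty}_{t}L^{5}_{x}}\lesssim \|u\|_{L^{\infty}_{t}H^{1}_{x}}$ (Sobolev $H^{1}(\mathbb{R}^{3})\hookrightarrow L^{5}$) yields $\|u\|_{L^{5}_{t,x}}\lesssim \|u\|_{L^{20/9}_{t}L^{5}_{x}}^{4/9}\|u\|_{L^{\infty}_{t}L^{5}_{x}}^{5/9}\le C(\|u_{0}\|_{H^{1}})$, polynomial, as claimed. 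Scattering in $H^{1}$ is then the usual consequence: global finiteness of the Strichartz norms makes $t\mapsto e^{-it\Delta}u(t)$ a Cauchy family in $H^{1}$ as $t\to\pm\infty$ by Strichartz applied to the Duhamel tail. The hard part here is not conceptual but bookkeeping: one must verify that the number of intervals $J$, and hence the final constant, depends \emph{polynomially} and not, say, exponentially on $\|u_{0}\|_{H^{1}}$, which is precisely what the polynomial a priori $L^{4}_{t,x}$ bound guarantees; the only genuinely nontrivial ingredient, the interaction Morawetz inequality, is quoted from the cited works.
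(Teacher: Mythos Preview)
Your proposal is correct and follows essentially the same route the paper sketches in the remark immediately after the proposition: the interaction Morawetz estimate gives a polynomial a priori $L^4_{t,x}$ bound, which together with the conserved $L^\infty_t H^1_x$ control and the local theory (interval subdivision plus Strichartz) upgrades to a polynomial $L^5_{t,x}$ bound. The only cosmetic difference is the order of operations: the paper first interpolates $L^4_{t,x}$ with $L^\infty_t L^6_x$ to obtain an $\dot H^{1/2}$-admissible spacetime norm and \emph{then} invokes the local theory (so the smallness threshold on each subinterval is absolute), whereas you subdivide directly on $L^4_{t,x}$, in which case closing the continuity argument on each $I_j$ uses the $H^1$ bound and your $\eta$ must in fact depend polynomially on $\|u_0\|_{H^1}$ rather than being an absolute constant---either way the final dependence is polynomial.
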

\begin{rem}
We note that the scattering norm $L^{5}_{x,t}$ can be interpolated by the interaction Morawetz bound in 3D
\begin{equation}
  \|u\|^4_{L^{4}_{t,x}} \lesssim  \|u\|^2_{L^2}\cdot \sup_{t}\|\nabla^{\frac{1}{2}}u\|^2_{L^2} \lesssim \|u\|^4_{H^1},   
\end{equation}
and the a-priori bound $\|u\|_{L^{\infty}_t H^1_x}$ according to the conservation of the energy. Thus it can be expressed as a polynomial of the $H^{1}$-norm of the initial data.

Strictly speaking, one first performs an  interpolation between  $L_{t,x}^{4}$ and $L_{t}^{\infty}L_{x}^{6}$ to obtain a control for some $L_{t}^{p}L_{x}^{q}$, so that $\frac{2}{p}+\frac{3}{q}=1$. This a-priori bound plus the local theory gives an a  priori $L_{t}^{5}L_{x}^{5}$ bound.
\end{rem}

\begin{rem}
We also note that, for NLS with criticality $s_c$ larger than the criticality of the interaction Morawetz estimate, i.e. $\frac{1}{4}$, if one assumes an a-priori bound higher than the critical level: $\|u\|_{L^{\infty}_t H^{s}_x}<\infty$ ($s>s_c$), then the 
scattering can be obtained directly via the interpolation. Thus, generally, studying the NLS model with data in a critical space is  highly nontrivial.
\end{rem}

\section{Proof of Theorem \ref{thm1}, \ref{thm2}}\label{3}
\subsection{Proof of Theorem \ref{thm1}}
We start with Theorem \ref{thm1}. This is the most non-technical part of the five theorems, but the proofs of the other theorems build upon this one.

We denote
\begin{equation}
M_{1}=\|u_{0}\|_{L^{1}_x}+(\|u_{0}\|_{H^{1}}+1)^{2}
\end{equation}
and we only consider $M_{1}$ large.
It is enough to prove 
\begin{equation}
\|u(t)\|_{L_{t}^{\infty}}\lesssim \exp M_{1}^{\alpha}	
\end{equation}
for some $\alpha>0$.

We recall that we have scattering for such data,
\begin{equation}\label{eq: scatteringnorm}
\|u\|_{L_{t}^{p}L_{x}^{q}}^{p}\lesssim M_{1}^{\beta},
\end{equation}
and by mass and energy conservation law, we have
\begin{equation}
\|u(t)\|_{L_{t}^{\infty}H^{1}}\lesssim M_{1}.
\end{equation}
 
We proceed with a   bootstrapping  argument for the quantity 
\begin{equation}\label{eq: ab}
A(t):=\sup_{0\leq\tau\leq t}\tau^{3/2}A(\tau).
\end{equation}

We only perform  a priori estimates for $A(t)$, i.e. we will assume $A(t)$ is finite for all $t$ and prove estimates of form, for example,
\begin{equation}
A(t)\leq C+\epsilon A(t).
\end{equation}
One can apply approximation and continuity arguments (bootstrapping argument) to transfer those a priori estimates to desired estimates, i.e. get rid of the assumption that $A(t)$ is finite for all $t$.
We will need a large parameter $M$ which will be determined later. We write down the Duhamel Formula of $u$, 
 \begin{equation}\label{eq: baseduhamel}
 u(t)=e^{it\Delta}u_{0}-i\int_{0}^{t}e^{i(t-s)\Delta}(|u(s)|^{2}u(s))ds,	
 \end{equation}
 for notation simplicity, we'll denote
 \begin{equation}
 N(u)=|u|^{2}u.	
 \end{equation}
 
 We first prove
\begin{lem}\label{lem: thm1local}
For $0<t\ll M_{1}^{-100}$,
\begin{equation}
\|u(t)\|_{L^{\infty}_x}\leq CM_{1}t^{-3/2}+Ct^{1/2}M_{1}^{3}+\frac{1}{2}A(t)t^{-3/2}.	
\end{equation} 	
\end{lem}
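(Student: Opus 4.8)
The plan is to split the Duhamel formula \eqref{eq: baseduhamel} into the linear part and the nonlinear integral, and to bound each in $L^\infty_x$ separately on the short time interval $0<t\ll M_1^{-100}$. For the linear part, I would apply the dispersive estimate (Lemma \ref{lem:dispersive}): $\|e^{it\Delta}u_0\|_{L^\infty_x}\lesssim t^{-3/2}\|u_0\|_{L^1_x}\leq CM_1 t^{-3/2}$, which accounts for the first term. This is immediate and poses no obstacle.

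The heart of the lemma is the nonlinear integral $\int_0^t e^{i(t-s)\Delta}N(u(s))\,ds$. The idea is to further split this integral at $s = t - M$ is not available here since $t$ is tiny, so instead I would just treat the whole integral over $[0,t]$ using the ``Sobolev-first'' trick advertised in the technical remark of the introduction: write $e^{i(t-s)\Delta}N(u)$, move a derivative inside via $\|f\|_{L^\infty_x}\lesssim \| |\nabla_x| f\|_{L^3_x}$ (Sobolev embedding $\dot W^{1,3}\hookrightarrow L^\infty$ in $\mathbb R^3$), then apply the dispersive estimate \eqref{eq:dispersive} with $p=3$, $p'=3/2$ to get a factor $(t-s)^{-1/2}$ and the norm $\| |\nabla_x|(|u|^2u)\|_{L^{3/2}_x}$. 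By the fractional Leibniz rule this is controlled by $\|u\|_{L^6_x}^2\|\nabla u\|_{L^2_x}$, hence by $\|u\|_{H^1_x}^3\lesssim M_1^3$ uniformly in $s$. Integrating $(t-s)^{-1/2}$ over $[0,t]$ gives $t^{1/2}$, producing the term $Ct^{1/2}M_1^3$. Actually, to be careful, I expect one should not waste the smallness; but since this is only the ``local'' lemma, the crude bound $Ct^{1/2}M_1^3$ suffices and the $\tfrac12 A(t)t^{-3/2}$ term must come from a different piece — namely a small portion near $s=t$ where one wants to invoke the bootstrap hypothesis; however, on $0<t\ll M_1^{-100}$ one can also simply absorb: split $[0,t]$ at $t/2$, on $[0,t/2]$ the kernel $(t-s)^{-3/2}$ is bounded by $(t/2)^{-3/2}$ and $\|N(u)\|_{L^1_x}\le \|u\|_{L^3_x}^3$ can be bounded crudely, which after using scattering/energy and the tininess of $t$ gives a contribution $\le \tfrac12 A(t)t^{-3/2}$; on $[t/2,t]$ use the Sobolev trick to get $Ct^{1/2}M_1^3$. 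I would assemble these three contributions to obtain the stated inequality.

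The main obstacle I anticipate is the bookkeeping of exponents so that the portion estimated via the dispersive estimate with the bounded kernel genuinely yields the coefficient $\tfrac12$ in front of $A(t)t^{-3/2}$ rather than a large constant: this is where the restriction $t\ll M_1^{-100}$ is used, to beat down the powers of $M_1$ coming from the energy/scattering bounds $\|u\|_{L^\infty_t H^1_x}\lesssim M_1$ and $\|u\|_{L^p_tL^q_x}^p\lesssim M_1^\beta$. One must check that after applying Hölder in time on the relevant subinterval (pairing the kernel with a Strichartz norm of $N(u)$, which costs $\lesssim M_1^{O(1)}$), the leftover power of $t$ dominates that polynomial in $M_1$; since $t^{c}\le M_1^{-100c}$ this is arranged by taking the $100$ large enough. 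Everything else — the fractional Leibniz/Kato–Ponce estimate, Sobolev embedding, the dispersive and Strichartz bounds — is standard and quoted from Section \ref{pre}.
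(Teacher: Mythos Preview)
Your plan has the right ingredients but a genuine technical gap in the treatment of the $[t/2,t]$ piece. You claim
\[
\big\||\nabla_x|(|u|^2u)\big\|_{L^{3/2}_x}\lesssim \|u\|_{L^6_x}^2\|\nabla u\|_{L^2_x},
\]
but the H\"older exponents do not match: $\tfrac16+\tfrac16+\tfrac12=\tfrac56\neq\tfrac23$. The correct Leibniz/H\"older bound at $L^{3/2}$ with one $\nabla u\in L^2$ forces the remaining two copies of $u$ to lie in $L^{12}$ (or one in $L^{6}$ and one in $L^{\infty}$), and neither is available from the $H^1$ bound alone in $\mathbb R^3$. Relatedly, the endpoint embedding $\dot W^{1,3}(\mathbb R^3)\hookrightarrow L^\infty$ you invoke is false (it only gives BMO); one must use $W^{1,3+}\hookrightarrow L^\infty$, which after the dispersive step leaves $\|\langle\nabla\rangle N(u)\|_{L^{3/2-}_x}$, and again one factor of $u$ is pushed above $L^6$. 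So your claimed bound $Ct^{1/2}M_1^3$ on $[t/2,t]$, purely from $\|u\|_{L^\infty_tH^1_x}$, cannot be obtained.

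This also means your allocation is reversed relative to the paper. In the paper the $[0,t/2]$ piece is the one controlled crudely by $\|u\|_{L^3_x}^3\lesssim M_1^3$ and the bounded kernel $(t-s)^{-3/2}\sim t^{-3/2}$, yielding $Ct^{-1/2}M_1^3$ (the middle term; note the exponent in the lemma statement should read $t^{-1/2}$, as the proof shows). The bootstrap quantity $A(t)$ necessarily enters on $[t/2,t]$: after $W^{1,3+}$ and Leibniz one has
\[
\|\langle\nabla\rangle N(u)\|_{L^{3/2-}_x}\lesssim \|u\|_{H^1}\|u\|_{L^\infty_x}\|u\|_{L^{6-}_x},
\]
and then H\"older in $s$ pairs $(t-s)^{-(1/2+)}$ with $\|u\|_{L^{4+}_tL^{6-}_x}$ (a $\dot H^{1/2}$-admissible Strichartz norm), while the $L^\infty$ factor contributes $A(t)t^{-3/2}$. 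The residual prefactor is $t^{0+}M_1^{O(1)}$, and it is precisely the hypothesis $t\ll M_1^{-100}$ that turns this into $\le\tfrac12$. Your last paragraph describes exactly this mechanism, but you attach it to the wrong subinterval; on $[0,t/2]$ the bootstrap insertion $\|u\|_{L^\infty}\le A(s)s^{-3/2}$ is unusable because $\int_0 s^{-3/2}\,ds$ diverges.
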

This lemma, though looking simple, establishes the base for the bootstrapping argument  for $A(t)$, i.e. $A(t)$ is locally finite.  But we note, this estimate is only useful for  short time.

\begin{proof}[Proof of Lemma \ref{lem: thm1local}]

By classical dispersive estimate,
\begin{equation}\label{eq: dl}
\|e^{it\Delta}u_{0}\|_{L_{x}^{\infty}}\lesssim M_{1}t^{-3/2}
\end{equation}	
and this handles the linear part in the Duhamel formula \eqref{eq: baseduhamel}.
For the nonlinear part in \eqref{eq: baseduhamel}, we may spilt the integral into 
$\int_{0}^{t}=\int_{0}
^{t/2}+\int_{t/2}^{t}$.
We estimate the first part by 
\begin{equation}\label{eq: h12control}
\int_{0}^{t/2}\|e^{i(t-s)\Delta}N(u)\|_{L^{\infty}_x}ds \lesssim t^{-3/2}\int_{0}^{t/2}\|N(u)\|_{L_{x}^{1}}ds\lesssim t^{-3/2}\frac{t}{2}\|u\|_{L_{t}^{\infty}\dot{H}^{1/2}}^{3}\lesssim t^{-1/2}M_{1}^{3}.	
\end{equation}
We use the Sobolev embedding, $W^{3+,1}\mapsto L^{\infty}$, and estimate via
\begin{equation}\label{eq: reducsobo}
\begin{aligned}
&\|\int_{t/2}^{t}e^{i(t-s)\Delta}(N(u(s)))\|_{L_{x}^{\infty}}ds\\
\lesssim &\int_{t/2}^{t}\|e^{it\Delta}\langle \nabla \rangle N(u)\|_{L_{x}^{3+}}ds\\
\lesssim &\int_{t/2}^{t}(t-s)^{-(\frac{1}{2}+)}\|\langle \nabla \rangle N(u)\|_{L_{x}^{\frac{3}{2}-}}ds
\end{aligned}	
\end{equation}
and we apply again dispersive estimate in the last step.

Applying fractional Leibniz rule\footnote{It is somehow important here that the Leibniz rule covers the end point involves $L^{\infty}$.} when $s<1$, Theorem A-12 in \cite{kenig1993well}, or the usual Leibniz rule when $s=1$, we have
\begin{equation}\label{eq: fl}
\|\langle \nabla \rangle N(u)\|_{L_{x}^{p}}=\|u\|_{H^{s}}\|u\|_{L_x^{\infty}}\|u\|_{L_{x}^{q}}, \quad \forall \frac{1}{p}=\frac{1}{q}+\frac{1}{2}, \quad 1<p,q<\infty.
\end{equation}

One may now carry out the estimate \eqref{eq: reducsobo} as 
\begin{equation}\label{eq: afterredu}
\begin{aligned}
&\int_{t/2}^{t}(t-s)^{-(\frac{1}{2}+)}\|\langle \nabla \rangle N(u)\|_{L_{x}^{\frac{3}{2}-}}ds\\
&\lesssim (\int_{t/2}^{t}(t-s)^{-1+})^{\frac{1}{2-}}(\int_{0}^{t/2}\|\langle \nabla \rangle N(u)\|_{L_{x}^{\frac{3}{2}-}}^{2+})^{\frac{1}{2+}}ds\\
&\lesssim t^{\epsilon}M_{1}^{2}(\int_{t/2}^{t}\|u(s)\|_{L_x^{\infty}}^{2+}\|u\|_{L_{x}^{6-}}^{2+}ds)^{\frac{1}{2+}}.
\end{aligned}	
\end{equation}

Summarizing \eqref{eq: reducsobo}, \eqref{eq: afterredu}, and letting  be 
$(4+,6-)$ be $\dot{H}^{1/2}$ admissible pair, we have
\begin{equation}\label{eq: finalredu}
\begin{aligned}
\big\|\int_{t/2}^{t}e^{i(t-s)\Delta}N(u(s))ds\big\|_{L_{x}^{\infty}}&\lesssim    t^{\epsilon}M_{1}^{2}(\int_{t/2}^{t}\|u(s)\|_{L_{x}^{\infty}}^{2+}\|u\|_{L_{x}^{6-}}^{2+}ds)^{\frac{1}{2+}}\\
&\lesssim t^{\epsilon}M_{1}^{2}(t)^{\frac{1}{2}-}(\int^{t}_{t/2}\|u\|_{L_{x}^{\infty}}^{4+}\|u\|_{L_{x}^{6-}}^{4+}ds)^{\frac{1}{4+}}.
\end{aligned}	
\end{equation}

Thus, we have, by \eqref{eq: dl}, \eqref{eq: h12control}, \eqref{eq: finalredu},  for $t\ll M_{1}^{-100}$,
\begin{equation}
\|u(t)\|_{L_{x}^{\infty}}\leq CM_{1}t^{-3/2}+Ct^{-1/2}M_{1}^{2}+CM_{1}^{2}t^{1/2-}A(t)(\int_{t/2}^{t}\|u\|_{L_{x}^{6-}}^{4+}ds)^{\frac{1}{4+}}.
\end{equation}

Lemma \ref{lem: thm1local} follows, since $t\leq M_{1}^{-100}$ and $M_{1}$ large.
\end{proof}

Next step, we want to handle the case $t\leq 2M$, and we need to refine the proof of Lemma \ref{lem: thm1local}. We state the next lemma as follows.
\begin{lem}\label{lem: thm1local2}
For $t\leq 2M$,
\begin{equation}
\|u(t)\|_{L_{x}^{\infty}}\leq CM_{1}t^{-3/2}+M_{1}^{53}+\frac{1}{2}A(t)t^{-3/2}.	
\end{equation}
\end{lem}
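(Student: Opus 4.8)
\textbf{Proof plan for Lemma \ref{lem: thm1local2}.} The plan is to run essentially the same Duhamel decomposition as in Lemma \ref{lem: thm1local}, but now on the whole interval $[0,t]$ with $t\le 2M$, so that the singular weight $(t-s)^{-3/2}$ no longer has an automatic smallness coming from $t\ll M_1^{-100}$. First I would split the Duhamel integral in \eqref{eq: baseduhamel} as $\int_0^{t}=\int_0^{t-1}+\int_{t-1}^{t}$ (or $\int_0^{t/2}+\int_{t/2}^{t}$ when $t$ is small, to avoid crossing $s=0$). On the far piece $\int_0^{t-1}$ one uses the plain dispersive bound $\|e^{i(t-s)\Delta}N(u)\|_{L^\infty_x}\lesssim (t-s)^{-3/2}\|N(u)\|_{L^1_x}\lesssim (t-s)^{-3/2}\|u\|_{\dot H^{1/2}}^3$; since $(t-s)^{-3/2}$ is integrable away from $s=t$, the contribution is $\lesssim \|u\|_{L^\infty_t\dot H^{1/2}}^3 \lesssim M_1^{3}$, harmless relative to the claimed $M_1^{53}$. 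On the near piece $\int_{t-1}^{t}$ one runs exactly the trick from the technical remark: Sobolev embedding $W^{3+,1}\hookrightarrow L^\infty$, then dispersive estimate into $L^{3/2-}_x$, producing the mildly singular weight $(t-s)^{-(1/2+)}$, then Hölder in time over an interval of length $1$, then the fractional Leibniz rule \eqref{eq: fl} to write $\|\langle\nabla\rangle N(u)\|_{L^{3/2-}_x}\lesssim \|u\|_{H^1}\|u\|_{L^\infty_x}\|u\|_{L^{6-}_x}$, then pull out $A(t)$ through $\|u(s)\|_{L^\infty_x}\le A(t)s^{-3/2}\le A(t)(t-1)^{-3/2}$, and finally absorb the remaining $\|u\|_{L^{6-}_x}$ factors using the scattering norm bound \eqref{eq: scatteringnorm} with $(4+,6-)$ a $\dot H^{1/2}$-admissible Strichartz pair.

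The key point, and what makes the $M_1^{53}$-type constant appear rather than a clean $\epsilon A(t)t^{-3/2}$, is that on $[t-1,t]$ with $t\le 2M$ we cannot gain smallness from the length of the interval alone, so we instead subdivide $[t-1,t]$ (equivalently $[0,2M]$) into $\sim M\cdot M_1^{C}$ subintervals on each of which the local-in-time scattering norm $\|u\|_{L^{5}_{t,x}}$ of the solution is $\le \eta$ for a small absolute $\eta$; the number of such subintervals is controlled by \eqref{eq: scatteringnorm}, hence polynomial in $M_1$ (recall $M$ itself will be chosen polynomial in $M_1$). On each subinterval the Hölder-in-time step gives a genuinely small coefficient in front of $A(t)$, and summing over the polynomially many subintervals turns the net contribution into $C M_1^{C'}+\tfrac12 A(t)t^{-3/2}$, with the bound $M_1^{53}$ just bookkeeping the exponent. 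The linear term contributes $CM_1 t^{-3/2}$ as before via \eqref{eq: dl}, and the low-frequency/early-time contribution $\int_0^{t/2}$ (when relevant) contributes $\lesssim t^{-1/2}M_1^{3}\lesssim M_1^{3}$, again absorbed.

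I would organize the write-up as: (i) state the subdivision $[0,2M]=\bigcup_{j=1}^{J}I_j$ with $\|u\|_{L^5_{t,x}(I_j)}\le\eta$ and $J\lesssim M\,M_1^{\beta}$ by the scattering bound; (ii) on each $I_j$ estimate $\|\int_{I_j\cap[0,t]}e^{i(t-s)\Delta}N(u)\|_{L^\infty_x}$ by the Sobolev+dispersive+Hölder+Leibniz chain above, obtaining a bound $\lesssim \eta\, M_1^{2}\,A(t)\,(t-\sup I_j)^{-3/2}\cdot(\text{time weight})$ together with a non-$A(t)$ remainder; (iii) for the subinterval(s) abutting $s=t$, handle the $(t-s)^{-3/2}$ singularity by the near/far split internal to that subinterval exactly as in Lemma \ref{lem: thm1local}; (iv) sum over $j$, using $\sum_j (t-\sup I_j)^{-3/2}\lesssim t^{-3/2}J$ crudely (or more carefully, keeping only the worst few terms), to get the coefficient of $A(t)t^{-3/2}$ down to $\le\tfrac12$ after choosing $\eta$ small and noting $J$ is polynomial; (v) collect the remainder terms into $CM_1t^{-3/2}+M_1^{53}$. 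The main obstacle I anticipate is step (iv): one has to be careful that summing the mildly singular weights over polynomially many subintervals does not itself blow up — i.e. one needs the Hölder-in-time smallness $\eta$ on each $I_j$ to beat the number of intervals $J$, which forces the constant $M$ (and hence the final exponent, here $53$) to be chosen in the right order relative to the scattering exponent $\beta$; handling the terminal subinterval containing the genuine $(t-s)^{-3/2}$ singularity is the only place a genuinely different (near/far) argument is needed, and it is exactly the content already proved in Lemma \ref{lem: thm1local}.
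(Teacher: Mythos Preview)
Your concern about step (iv) is well-founded, and in fact it is fatal to the approach. The subdivision into intervals $I_j$ with $\|u\|_{L^{4+}_t L^{6-}_x(I_j)}\le\eta$ produces no net smallness once you sum. Concretely, on the near piece $[t-1,t]$ (say $t\ge 2$) the Sobolev--dispersive--Leibniz chain gives a coefficient of $A(t)t^{-3/2}$ of size
\[
M_1\int_{t-1}^{t}(t-s)^{-(1/2+)}\|u(s)\|_{L^{6-}_x}\,ds.
\]
If you H\"older on each $I_j$ with exponents $(\tfrac{4}{3}+,4+)$ and then sum, you get
\[
\eta\sum_{j}\|(t-s)^{-(1/2+)}\|_{L^{4/3+}_t(I_j)}
\;\le\;\eta\,J^{1/4+}\,\|(t-s)^{-(1/2+)}\|_{L^{4/3+}_t([t-1,t])}
\;\sim\;\eta\,J^{1/4+}.
\]
But the greedy subdivision forces $J\sim\eta^{-(4+)}\|u\|_{L^{4+}_tL^{6-}_x([t-1,t])}^{4+}$, so $\eta\,J^{1/4+}\sim\|u\|_{L^{4+}_tL^{6-}_x([t-1,t])}$, which is exactly what a single H\"older on $[t-1,t]$ gives with no subdivision. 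The $\eta$ cancels completely, and the resulting coefficient in front of $A(t)t^{-3/2}$ is $\sim M_1^{2}$, not $\le\tfrac12$. No choice of $\eta$ or bookkeeping of the exponent (the ``$53$'') rescues this.

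The paper's fix is far simpler than your scheme and avoids subdivision altogether: split at $t-M_1^{-100}$ rather than $t-1$. The far piece $\int_0^{t-M_1^{-100}}$ is handled by the crude dispersive estimate $(t-s)^{-3/2}\|u\|_{L^3_x}^3$, and the integral $\int_0^{t-M_1^{-100}}(t-s)^{-3/2}\,ds\lesssim M_1^{50}$ is where the constant $M_1^{53}=M_1^{50}\cdot M_1^{3}$ actually comes from. On the near piece, of length $M_1^{-100}$, the same Sobolev--dispersive--Leibniz chain you wrote down produces the prefactor $(M_1^{-100})^{1/2-}M_1^{2}$ times the $(\int\|u\|_{L^\infty}^{4+}\|u\|_{L^{6-}}^{4+})^{1/(4+)}$ factor; the smallness now comes for free from the interval length, and the coefficient of $A(t)t^{-3/2}$ is $\lesssim M_1^{-47+}\ll\tfrac12$. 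In short: move the split point, do not subdivide.
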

\begin{proof}
We observe that conservation laws give,
\begin{equation}
\|u(t)\|_{L_{t}^{\infty}H^{1}} <\infty.
\end{equation}

Thus, for any $t$, one has 
\begin{equation}
\|u(s)\|_{L_{t}^{4+}L_{x}^{6-}}\lesssim M_{1}^{-10}.
\end{equation}

We still use Duhamel formula, \eqref{eq: baseduhamel}, and estimate the linear part  as  in \eqref{eq: dl}. Then  we split the nonlinear part into $\int_{0}^{t}=\int_{0}^{t-M_{1}^{-100}}$ and $\int_{t-M_{1}^{-100}}^{t}$.
Now, for the first part we proceed as in  \eqref{eq: h12control} via
\begin{equation}
\begin{aligned}
&\big\|\int_{0}^{t-M_{1}^{-100}}e^{i(t-s)\Delta}N(u(s))ds\big\|_{L_{x}^{\infty}}\\
\lesssim &\int_{0}^{t-M_{1}^{-100}}(t-s)^{3/2}\|u(s)\|^{3}_{L_{x}^{3}}ds\\
\lesssim &M_{1}^{50}M_{1}^{3},
\end{aligned}
\end{equation}
and the for the second part as in  \eqref{eq: reducsobo}, \eqref{eq: afterredu}, and we derive
\begin{equation}\label{eq: secrefine}
\begin{aligned}	
\big\| \int_{t-M_{1}^{-100}}^{t}e^{i(t-s)\Delta}N(u(s))ds \big\|_{L_{x}^{\infty}}\lesssim (M_{1}^{-100})^{\frac{1}{2}-}M_{1}^{2}(\int_{t-M_{1}^{-100}}^{t}\|u(s)\|_{L_{x}^{\infty}}^{4+}\|u\|_{L_{x}^{6-}}^{4+}ds)^{\frac{1}{4+}}.
\end{aligned}
\end{equation}
Thus, the proof of Lemma \ref{lem: thm1local2} is now complete.
\end{proof}

Now we turn to another lemma for the case $t\geq 2M$. The statement reads,
\begin{lem}\label{lem: eapriori}
For $t\geq 2M$,
\begin{equation}\label{eq: eseap}
\|u(t)\|_{L_{x}^{\infty}}\leq CM_{1}t^{-3/2}+CM_{1}^{7}t^{-3/2}+\frac{1}{2}A(t)t^{-\frac{3}{2}}+CM_{1}^{4+}t^{-3/2}(\int_{t-M}^{t}A(s)^{4+}\|u(t)\|_{L_{x}^{6-}}^{4+}ds)^{\frac{1}{4+}}.
\end{equation}	
\end{lem}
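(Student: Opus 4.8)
The plan is to re-run the argument behind Lemma~\ref{lem: thm1local}, but now to exploit the separation $t\geq 2M$ by cutting the Duhamel integral into three pieces: a bounded initial piece controlled by conservation laws, a ``body'' piece on which the a priori pointwise decay $\|u(s)\|_{L^\infty_x}\leq A(s)s^{-3/2}\leq A(t)s^{-3/2}$ is fed in with a summable (and small) time weight, and a ``near-diagonal'' piece $[t-M,t]$ handled by the Sobolev/H\"older trick. Starting from \eqref{eq: baseduhamel}, the linear part is disposed of by the dispersive estimate (Lemma~\ref{lem:dispersive}): $\|e^{it\Delta}u_0\|_{L^\infty_x}\lesssim t^{-3/2}\|u_0\|_{L^1_x}\leq M_1 t^{-3/2}$, which is the first term of \eqref{eq: eseap}. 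For the nonlinear part I would write $\int_0^t=\int_0^{M}+\int_M^{t-M}+\int_{t-M}^t$; since $t\geq 2M$, all three intervals are genuine and in particular $t-M\geq t/2$.

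On $[0,M]$ one has $t-s\geq t-M\geq t/2$, so the $L^\infty_x$ dispersive kernel is $\lesssim t^{-3/2}$, while $\|N(u)(s)\|_{L^1_x}=\|u(s)\|_{L^3_x}^3\lesssim\|u(s)\|_{H^1}^3\lesssim M_1^3$ by energy conservation; integrating over a window of length $M$ (which, as explained below, is taken of size $M\sim M_1^{4}$) gives $\lesssim M\,M_1^3\,t^{-3/2}\lesssim M_1^7 t^{-3/2}$, the second term. On $[M,t-M]$ I would again use the $L^\infty_x$ dispersive estimate, but now bound $\|N(u)(s)\|_{L^1_x}=\|u(s)\|_{L^3_x}^3\leq\|u(s)\|_{L^\infty_x}\|u(s)\|_{L^2_x}^2\leq M_1^2 A(t) s^{-3/2}$ using mass conservation and the a priori decay, and then check (splitting at $t/2$) that $\int_M^{t-M}(t-s)^{-3/2}s^{-3/2}\,ds\lesssim t^{-3/2}M^{-1/2}$. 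This piece is thus $\lesssim M_1^2 M^{-1/2} A(t) t^{-3/2}$; choosing $M$ large enough that $C M_1^2 M^{-1/2}\leq\tfrac12$, i.e.\ $M\gtrsim M_1^{4}$, it is $\leq\tfrac12 A(t)t^{-3/2}$, the third term.

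On $[t-M,t]$ the $L^\infty_x$ dispersive kernel is not integrable, so here I would run the Sobolev/H\"older trick exactly as in \eqref{eq: reducsobo}--\eqref{eq: finalredu}: the embedding $W^{1,3+}_x\hookrightarrow L^\infty_x$ followed by the dispersive estimate bounds the piece by $\int_{t-M}^t (t-s)^{-(1/2+)}\|\langle\nabla\rangle N(u)(s)\|_{L^{3/2-}_x}\,ds$; H\"older in $s$ peels off $\big(\int_{t-M}^t (t-s)^{-(1-)}\,ds\big)^{1/(2-)}\lesssim M^{0+}$; the fractional Leibniz rule \eqref{eq: fl} gives $\|\langle\nabla\rangle N(u)(s)\|_{L^{3/2-}_x}\lesssim M_1\|u(s)\|_{L^\infty_x}\|u(s)\|_{L^{6-}_x}$; and, since $t-M\geq t/2$, one writes $\|u(s)\|_{L^\infty_x}=s^{-3/2}\big(s^{3/2}\|u(s)\|_{L^\infty_x}\big)\leq 2^{3/2}t^{-3/2}A(s)$ and runs one more H\"older in $s$ to pass from exponent $2+$ to $4+$, as in \eqref{eq: finalredu}. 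Absorbing the powers of $M$ (polynomial in $M_1$) into the constant, this contributes $\lesssim M_1^{4+}t^{-3/2}\big(\int_{t-M}^t A(s)^{4+}\|u(s)\|_{L^{6-}_x}^{4+}\,ds\big)^{1/(4+)}$, the fourth term; summing the four contributions yields \eqref{eq: eseap}.

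The main obstacle is the ``far'' part $\int_0^{t-M}$: it cannot be controlled using only the scattering/Strichartz norms of $u$, because $u\notin L^3_{t,x}$ and any such bound loses a power of $t$ relative to the desired $t^{-3/2}$. The way around this is precisely the splitting $[0,M]\cup[M,t-M]$: the bounded piece $[0,M]$ is absorbed by the conserved $H^1$ norm (this is the source of the $M_1^{7}$ term), while on $[M,t-M]$ one inserts the a priori pointwise decay, whose tail $\int_M^\infty s^{-3/2}\,ds=2M^{-1/2}$ is not merely convergent but small --- and it is this smallness that forces the choice $M\sim M_1^{4}$ and generates the self-improving term $\tfrac12 A(t)t^{-3/2}$ needed to close the later bootstrap. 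The remaining work (the $+/-$ bookkeeping in the near-diagonal piece and the elementary integral $\int_M^{t-M}(t-s)^{-3/2}s^{-3/2}ds$) is routine once Lemma~\ref{lem: thm1local} is available.
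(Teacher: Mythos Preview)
Your proposal is correct and follows essentially the same route as the paper: the same three-fold splitting $\int_0^t=\int_0^M+\int_M^{t-M}+\int_{t-M}^t$, the same treatment of each piece (dispersive estimate plus $H^1$ control on $[0,M]$; dispersive estimate plus the bootstrap bound $\|u(s)\|_{L^\infty_x}\leq A(t)s^{-3/2}$ on $[M,t-M]$, with $M\sim M_1^4$ producing the $\tfrac12 A(t)t^{-3/2}$; and the Sobolev/H\"older trick of Lemmas~\ref{lem: thm1local}--\ref{lem: thm1local2} on $[t-M,t]$), and the same absorption of the powers of $M$ into $M_1^{4+}$.
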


\begin{proof}
To deal with this case, we need to, similar as in \cite{fan2021decay},  further write \eqref{eq: baseduhamel} into 
\begin{eqnarray*}
u(t)&=&e^{it\Delta}u_{0}-i\int_{0}^{M}e^{i(t-s)\Delta}N(u)ds-i\int_{M}^{t-M}e^{i(t-s)\Delta}N(u(s))ds-i\int_{t-M}^{t}e^{i(t-s)\Delta}N(u(s))ds\\
&:=&e^{it\Delta}u_{0}+F_{1}+F_{2}+F_{3}.
\end{eqnarray*}

The estimate of $F_{1}$ will be straightforward, since $t-M\gtrsim t$, and we have, via simple Minkowski's inequality and the usual dispersive estimate,
\begin{equation}\label{eq: estf1}
\|F_{1}\|_{L_{x}^{\infty}}\lesssim t^{-3/2}\int_{0}^{M}\|u(s)\|_{L_{x}^{3}}^{3}ds \lesssim t^{-3/2}MM_{1}^{3}.
\end{equation}

For $F_{2}$, it is crucial  that we are in dimension\footnote{One can see this scheme will have a log divergence if instead one considers cubic NLS in dimension 2.} at least 3, we have
\begin{equation}
\begin{aligned}
\|F_{2}\|_{L_{x}^{\infty}}&\leq \int_{M}^{t-M}\|e^{i(t-s)\Delta}N(u(s))\|_{L_{x}^{\infty}}ds\\
&\lesssim \int_{M}^{t-M}(t-s)^{-3/2}\|u\|_{L_{x}^{\infty}}\|u\|_{L_{x}^{2}}^{2}ds\\
&\lesssim \int_{M}^{t-M}(t-s)^{-3/2}A(s)s^{-\frac{3}{2}}M_{1}^{2 }ds.
\end{aligned}	
\end{equation}
Thus, we have
\begin{equation}\label{eq: f2absorb}
\|F_{2}\|_{L_{x}^{\infty}}\leq \frac{1}{2}A(t)t^{-3/2}, 	
\end{equation}
for all $M\geq CM_{1}^{4}$ for some universal $C$ large.

For $F_{3}$, the main point here is that we use analysis similar to Lemma \ref{lem: thm1local}, \ref{lem: thm1local2} to refine the analysis in \cite{fan2021decay}, and we will further control via the Gronwall's argument.

Now, similar to the proof of Lemma \ref{lem: thm1local2}, in particular the part for \eqref{eq: secrefine}, we derive
\begin{equation}\label{eq: controlforf3}
\begin{aligned}
\|F_{3}\|_{L_{x}^{\infty}}\lesssim \|F_{3}\|_{W^{3+,1}}\lesssim M^{\frac{1}{2}+}M_{1}^{2}(\int_{t-M}^{t}\|u(s)\|_{L_{x}^{\infty}}^{4+}\|u\|_{L_{x}^{6-}}^{4+}ds)^{\frac{1}{4+}}.
\end{aligned}
\end{equation}

To summarize, using \eqref{eq: estf1}, \ $M\sim M_{1}^{4}$ so that \eqref{eq: f2absorb} holds,  and \eqref{eq: controlforf3}, we obtain Lemma \ref{lem: thm1local2}.
\end{proof}

Based on the above three lemmas, we are now ready to prove Theorem \ref{thm1}.

\begin{proof}[Concluding the proof of Theorem \ref{thm1}]
By Lemma \ref{lem: thm1local2}, one has, for some $\beta_{1}>0$,
\begin{equation}\label{eq: atshort}
A(t)\leq CM_{1}^{\beta_{1}}, \quad \forall t\leq CM_{1}^{4},	
\end{equation}
and by Lemma \ref{lem: eapriori}, one has, for some $\beta_{2}, \beta_{3}>0$,
\begin{equation}\label{eq: atlong}
\begin{aligned}
A(t)&\leq CM_{1}^{\beta_{2}}+CM_{1}^{\beta_{3}}(\int_{t-M}^{t}A(s)^{4+}\|u(t)\|_{L_{x}^{6-}}^{4+}ds)^{\frac{1}{4+}}\\
&\leq CM_{1}^{\beta_{2}}+CM_{1}^{\beta_{3}}(\int_{0}^{t}A(s)^{4+}\|u(t)\|_{L_{x}^{6-}}^{4+}ds)^{\frac{1}{4+}}.\end{aligned}	
\end{equation}
Thus, for all $t$,
\begin{equation}\label{eq: firsta}
A(t)^{4+}\leq CM_{1}^{\beta_{4}}+CM_{1}^{\beta_{4}}\int_{0}^{t}A(s)^{4+}\|u(s)\|_{L_{x}^{6-}}^{4+}ds.	
\end{equation}
Via Gronwall's inequality and \eqref{eq: scatteringnorm}, one has 
\begin{equation}
	A(t)\leq C\exp M_{1}^{\beta}.
\end{equation}
This gives Theorem \ref{thm1}.
\end{proof}
\begin{proof}[An explanation for Corollary \ref{cor: thm1}].
We now briefly explain the point of Corollary \ref{cor: thm1} as follows. We still define $A(t)$ in the same way. In view of the scattering result (scattering norm is finite), letting $t$ big enough, the quantity $(\int_{t-M}^{t}\|u(t)\|_{L_{x}^{6-}}^{4+}ds)^{\frac{1}{4+}}$ can be made arbitrarily small, say $\epsilon>0$.\vspace{3mm}

Thus, consider the first inequality in \eqref{eq: atlong}, letting $t$ big enough, we can drag $A(s)^{4+}$ out and letting $\epsilon$ small enough to beat other constants,
\begin{equation}
\begin{aligned}
A(t)&\leq CM_{1}^{\beta_{2}}+CM_{1}^{\beta_{3}}(\int_{t-M}^{t}A(s)^{4+}\|u(t)\|_{L_{x}^{6-}}^{4+}ds)^{\frac{1}{4+}}\\
&\leq CM_{1}^{\beta_{2}}+CM_{1}^{\beta_{3}}A(t)(\int_{0}^{t}\|u(t)\|_{L_{x}^{6-}}^{4+}ds)^{\frac{1}{4+}}\\
&\leq CM_{1}^{\beta_{2}}+CM_{1}^{\beta_{3}}A(t)\epsilon \\
&\leq CM_{1}^{\beta_{2}}+\frac{1}{2} A(t).
\end{aligned}	
\end{equation}
Then moving the second term on the right hand side to the left, this gives us a polynomial bound for $\limsup_{t\rightarrow \infty} A(t)$ as desired.
In many situations one may be particularly  interested in the asymptotic behavior of solutions for $t$ large rather than estimates that are uniform with respect to  $t$. In our case we are for example interested on  $\sup_{t>0}t^{3/2}\|u(t)\|_{L_{x}^{\infty}}$ versus $\limsup_{t\rightarrow \infty}t^{3/2}\|u(t)\|_{L_{x}^{\infty}}$. Thus the above observation may be useful for some problems. One may also understand this decay result in the following manner: the exponential constant dependence is caused by the finite time, for the long time, the constant dependence is essentially polynomial.

The proof of Corollary \ref{cor: thm1} is now complete.
\end{proof}
\subsection{Proof of Theorem \ref{thm2}}
One can obtain a polynomial type control rather than an exponential one, with the extra assumption $xu_{0}\in L_{x}^{2}$. This is because in this case one can apply  the pseudo-conformal transformation to get a quantitative control for the decay of $L^{6}$. The argument below seems classical, see for example, \cite{murphy2017subcritical}.
Indeed, let $J(t)=x+2it\nabla$, one has 
\begin{equation}\label{eq: func}
\|f\|_{L_{x}^{6}}\lesssim t^{-1}\|J(t)f\|_{L_{x}^{2}}.
\end{equation}
Furthermore, for $u$ solving \eqref{eq: cubicnls}, the quantity
\begin{equation}\label{eq: pesudoenergy1}
	\frac{1}{2}\|J(t)f\|^{2}_{L_{x}^{2}}+\int t^{2}|u|^{4}dx
\end{equation}
is  monotonically decreasing in $t$, and when $t=0$, it equals $\|xu\|_{L^2_x}^{2}$.
Thus, let 
\begin{equation}
M_{1}:=(1+\|u_{0}\|_{H_{1}})^{2}+\|xu_{0}\|_{L^2_x}+\|u_{0}\|_{L_{x}^{1}},	
\end{equation}
one has 
\begin{equation}
\|u(t)\|_{L_{x}^{6}}\lesssim t^{-1}M_{1},	
\end{equation}
and a simple interpolation with mass conservation gives
\begin{equation}
\|u(t)\|_{L_{x}^{6-}}\lesssim t^{-(1-)}M_{1}.	
\end{equation}
Now we go back to \eqref{eq: atshort}, \eqref{eq: atlong}, and we enhance \eqref{eq: atlong} into
\begin{equation}\label{eq: atlongrefined}
\begin{aligned}
A(t)\leq 	CM_{1}^{\beta_{2}}+CM_{1}^{\beta_{3}}(\int_{t-M}^{t}t^{-(4-)}A(s)^{4^{+}}ds)^{\frac{1}{4+}}\\
\leq CM_{1}^{\beta_{2}}+CM_{1}^{\beta_{3}}(\int_{0}^{t}(M+s)^{-(4-)}A(s)^{4^{+}}ds)^{\frac{1}{4+}}.
\end{aligned}
\end{equation}
Here we use the fact that for $s\in [t-M,t], \, s \sim t$, since $t\geq 2M.$
Thus, rather than \eqref{eq: firsta}, we have the estimate for $A$
\begin{equation}\label{eq: seconda}
A(t)^{4+}\leq C(M_{1}+M)^{\beta_{4}}+CM_{1}^{\beta_{4}}\int_{0}^{t}(M+t)^{-(4-)}A(s)^{4+}ds.
\end{equation}
Here we need choose $M$  to be a large polynomial of $M_{1}$, so that 
\begin{equation}
M_{1}^{\beta^{4}}\int_{0}^{\infty}(s+M)^{-4(-)}ds\lesssim 1,
\end{equation}
and Gronwall's inequality for \eqref{eq: seconda} gives
\begin{equation}
A(t)\lesssim M_{1}^{\beta}.	
\end{equation}

\section{Proof of Theorem \ref{thm3}}\label{4}
We now turn to the proof of Theorem \ref{thm3}. We first note that in general it is very hard to study unconditional scattering for \eqref{eq: cubicnls} for initial data $u_{0}\in H^{s}, s>1/2$, and it is a major open problem to study global well-posedness for $u_{0}\in H^{1/2}$. Both are not hard if one further assumes  that the initial data is in $xu_{0}\in L^{2}$.

Assume  $u$ solves \eqref{eq: cubicnls} with initial data
\begin{equation}
\|u_{0}\|_{H^{1}}+\|xu_{0}\|_{L_{x}^{2}}\leq M_{1}<\infty.
\end{equation}
We have, for some $\beta_{0}>0$
\begin{equation}\label{eq: scatterandregular}
\begin{aligned}
&\|u\|_{L_{t,x}^{5}}^{5}\leq CM_{1}^{\beta_{0}},\\
&\|u\|_{L_{t}^{\infty}H^{s}}\leq Ce^{M_{1}^{\beta_{0}}}.
\end{aligned}
\end{equation}
The proof of the first estimate  in \eqref{eq: scatterandregular} may be classical from the pseudo-conformal symmetry, see  for example textbook \cite{tao2006nonlinear}, \cite{bourgain1998refinements}, we briefly sketch it below for the convenience of the readers.
The second estimate in \eqref{eq: scatterandregular} follows from the first one by classical persistence of regularity arguments, see for example textbook \cite{tao2006nonlinear},   Lemma 3.12 in \cite{colliander2008global}, (by slightly changing the value of $\beta_{0}$ if necessary.)

Fix $s>1/2$, by the local theory, we have that there is a $\delta>0$, with $\delta\sim M_{1}^{-\beta_{1}}$, so that $u$ is well-posed, and 
\begin{equation}
\|u\|_{L_{t,x}^{5}[0,\delta]}\leq 1.	
\end{equation}
Meanwhile, by pseudo-conformal transform, we may define
\begin{equation}
\tilde{u}(s)=\frac{1}{(-s)^{3/2}}u(-\frac{1}{s},\frac{x}{-s})e^{-i|x|^{2}s/4},\end{equation}
and note that if $u$ solves \eqref{eq: cubicnls} in $[0,\delta]$ then $\tilde{u}$ is well defined in $(-\infty, -1/\delta]$, and solves
\begin{equation}
i\partial_{s}\tilde{u}+\Delta \tilde{u}=-s|\tilde{u}|^{2}\tilde{u}.	
\end{equation}
Consider
\begin{equation}
\begin{aligned}
H(\tilde{u}(s)):=\frac{1}{2}\int |\nabla \tilde{u}(s)|^{2}dx+\int \frac{1}{4}(-s)|\tilde{u}|^{4} dx\\
\equiv \frac{1}{8}\|J(t)u(t)\|_{L^2_x}^{2}+\frac{1}{4}\int t^{2}|u|^{4}dx
\end{aligned}
\end{equation}
where one does change of variable $t=-\frac{1}{s}$, and recall we have $J(t)=x+2it\nabla$.
Then we see that $H$ is monotonically decreasing, and 
\begin{equation}
\lim_{s\rightarrow -\infty}H(\tilde{u}(s))=\lim_{t\rightarrow 0}	\frac{1}{8}\|J(t)u(t)\|_{L^2_x}^{2}+\frac{1}{4}\int t^{2}|u|^{4}dx=\|xu_{0}\|_{L_{x}^{2}}^{2}.
\end{equation}
Thus, we have
\begin{equation}
H(\tilde{u}(-\frac{1}{\delta}))\leq CM_{1}^{2},	
\end{equation}
and hence  $\tilde{u}$ is a global solution with $\|u\|_{L_{t}^{\infty}H^{1}}\leq CM_{1}$.
Moreover, the standard $H^{1}$ local theory for  the usual cubic NLS gives
\begin{equation}
\|\tilde{u}\|_{L_{t,x}^{5}[-\frac{1}{\delta},0]}\leq CM_{1}^{\beta_{2}},
\end{equation}
since $(-s)$ is bounded by $-\frac{1}{\delta}\sim M_{1}^{\beta_{1}}$ for $s\in [-\frac{1}{\delta},0]$.
Thus, one further obtains 
\begin{equation}
\|u\|_{L_{t,x}^{5}[\frac{1}{\delta}, 0)}\leq CM_{1}^{\beta_{3}}.	
\end{equation}
This gives \eqref{eq: scatterandregular}.

Furthermore, we still derive $\|J(t)u\|_{L^2_x}\leq CM_{1}$, thus, by \eqref{eq: func}, we obtain,
\begin{equation}\label{eq: l6decay}
\|u(t)\|_{L_{x}^{6}}\leq CM_{1}t^{-1}.	
\end{equation}

We can now plug in \eqref{eq: scatterandregular}, \eqref{eq: l6decay} in the scheme of the proof of Theorem \ref{thm2}. We will sketch the argument  highlighting  the modifications that  need to be made.
We still focus on a priori estimates for $A(t)$, \eqref{eq: ab}, and we pose
\begin{equation}
(\|u_{0}\|_{H^{1}}+1)^{2}+\|u_{0}\|_{L_x^{1}}+\|xu_{0}\|_{L_x^{2}}=M_{1}.
\end{equation}
We start with an analogue of Lemma \ref{lem: thm1local},
 \begin{lem}\label{lem: thm3local}
 There exists $\beta_{4}>0$, $\beta_{4}$ large, so that for $t\leq M_{1}^{-\beta_{4}}$, one has 
 \begin{equation}\label{eq: t3l1}
 \|u(t)\|_{L_{x}^{\infty}}\leq CM_{1}t^{-3/2}+CM_{1}^{3}t^{-\frac{1}{2}}.	
 \end{equation}
 \end{lem}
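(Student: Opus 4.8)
The plan is to mimic the proof of Lemma~\ref{lem: thm1local} but exploit the new ingredients \eqref{eq: scatterandregular} and \eqref{eq: l6decay} to avoid using any $L^{\infty}_t H^1_x$ control that is unavailable at this low regularity. First I would write the Duhamel formula \eqref{eq: baseduhamel} for $u$ on the short time interval $t\leq M_{1}^{-\beta_{4}}$ and split the nonlinear part as $\int_{0}^{t}=\int_{0}^{t/2}+\int_{t/2}^{t}$. For the linear term, the usual dispersive estimate of Lemma~\ref{lem:dispersive} gives $\|e^{it\Delta}u_{0}\|_{L^{\infty}_x}\lesssim M_{1}t^{-3/2}$, which is the first term on the right of \eqref{eq: t3l1}.

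For the piece $\int_{0}^{t/2}e^{i(t-s)\Delta}N(u)\,ds$, since $t-s\sim t$ on this range I would apply the dispersive estimate in $L^{\infty}_x\leftarrow L^1_x$, losing $(t)^{-3/2}$, and then bound $\|N(u)\|_{L^1_x}=\|u\|_{L^3_x}^{3}\lesssim \|u\|_{L^2_x}^{3/2}\|u\|_{L^6_x}^{3/2}$; using mass conservation and the $L^6$-decay \eqref{eq: l6decay} this contributes $\lesssim t^{-3/2}\cdot t\cdot (\text{something bounded})$, i.e.\ a term $\lesssim M_{1}^{\gamma}t^{-1/2}$, consistent with the second term of \eqref{eq: t3l1}. (Alternatively, since $t$ is tiny here one can simply bound $\|u\|_{L^3_x}^3\lesssim \|u\|_{H^1}^3\lesssim M_1^3$ and absorb, but it is cleaner to use the $L^6$ decay that is already proved.) For the near-diagonal piece $\int_{t/2}^{t}$ I would use the Sobolev embedding $W^{3+,1}\hookrightarrow L^{\infty}$ as in \eqref{eq: reducsobo}, then the dispersive estimate $L^{3+}\leftarrow L^{3/2-}$ which only costs $(t-s)^{-(1/2+)}$ (integrable!), then the fractional Leibniz rule \eqref{eq: fl} — here the important point, already flagged in the footnote, is that the Kenig--Ponce--Vega Leibniz rule \cite{kenig1993well} is valid down to $s>1/2$ and covers the $L^{\infty}$ endpoint — to write $\|\langle\nabla\rangle N(u)\|_{L^{3/2-}_x}\lesssim \|u\|_{H^s}\|u\|_{L^{\infty}_x}\|u\|_{L^{6-}_x}$. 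Then Hölder in time against the integrable weight $(t-s)^{-(1/2+)}$, as in \eqref{eq: afterredu}--\eqref{eq: finalredu}, with $(4+,6-)$ a $\dot H^{1/2}$-admissible pair, produces a factor $A(t)t^{1/2-}$ multiplied by $\|u\|_{H^s}^{2}\lesssim e^{2M_1^{\beta_0}}$ from \eqref{eq: scatterandregular} and by $(\int_{t/2}^{t}\|u\|_{L^{6-}_x}^{4+}\,ds)^{1/(4+)}$, which on the tiny interval $t\leq M_1^{-\beta_4}$ is bounded by a small power of $t$ times a polynomial in $M_1$.

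Finally I would collect the three contributions: the linear term gives $CM_1 t^{-3/2}$, the $\int_0^{t/2}$ term gives $CM_1^{3}t^{-1/2}$ (after the interpolation and \eqref{eq: l6decay}), and the $\int_{t/2}^{t}$ term gives something like $C e^{M_1^{\beta_0}} t^{1/2-}A(t)$; choosing $\beta_4$ large enough that $t\leq M_1^{-\beta_4}$ forces $C e^{M_1^{\beta_0}} t^{1/2-}\leq \tfrac12 t^{-3/2}$, i.e.\ $t\leq (2Ce^{M_1^{\beta_0}})^{-1/(2-)}$, the last term is absorbed into $\tfrac12 A(t)t^{-3/2}$. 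Strictly speaking \eqref{eq: t3l1} as stated has no $A(t)$ term, so I would either state the self-improving inequality with a $\tfrac12 A(t)t^{-3/2}$ on the right (as in Lemmas~\ref{lem: thm1local}, \ref{lem: thm1local2}) and then note it closes because $A$ is finite on such a short interval, or absorb directly and conclude \eqref{eq: t3l1}; the cleanest is the former, matching the bootstrap structure used throughout Section~\ref{3}. The main obstacle is purely bookkeeping: one must be careful that every appearance of $\|u\|_{H^s}$ is genuinely controlled by \eqref{eq: scatterandregular} (an exponential in $M_1$, \emph{not} a polynomial, unlike the $H^1$ case), so the short-time length $M_1^{-\beta_4}$ must be taken exponentially small in $M_1$ — this is exactly why the final constant in Theorem~\ref{thm3} is $C\exp(M_1^{\beta})$ rather than polynomial, and one has to make sure the choice of $\beta_4$ is consistent with the later Gronwall step.
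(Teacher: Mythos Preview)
There are two genuine gaps in your argument.

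First, the Sobolev--Leibniz combination you invoke for the near-diagonal piece $\int_{t/2}^{t}$ does not work at regularity $H^{s}$ with $s$ close to $1/2$. The embedding $W^{1,3+}\hookrightarrow L^{\infty}$ (what the paper writes, with a notational slip, as ``$W^{3+,1}$'') costs a \emph{full} derivative, so the Leibniz step must produce $\|u\|_{H^{1}}$, not $\|u\|_{H^{s}}$; your claimed bound $\|\langle\nabla\rangle N(u)\|_{L^{3/2-}_{x}}\lesssim \|u\|_{H^{s}}\|u\|_{L^{\infty}_{x}}\|u\|_{L^{6-}_{x}}$ is false for $s<1$. The paper instead uses the embedding $W^{\frac{1}{2}+\kappa_{1},\,6-\kappa_{2}}\hookrightarrow L^{\infty}$ with $\kappa_{1}<s-\tfrac{1}{2}$, which requires only $\tfrac{1}{2}+\kappa_{1}<s$ derivatives; the dispersive step then goes $L^{6-\kappa_{2}}\leftarrow L^{(6-\kappa_{2})'}$ with decay $(t-s)^{-(1-)}$ rather than $(t-s)^{-(1/2+)}$, and the fractional Leibniz rule legitimately lands on $\|u\|_{H^{s}}$. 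This is exactly the endpoint analysis the paper flags (``we are in some sense at the end point case when $s$ approaches $\frac{1}{2}$'') and it cannot be bypassed by reusing the $H^{1}$-level embedding from Lemma~\ref{lem: thm1local}.

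Second, you control $\|u\|_{H^{s}}$ on the short interval by the global bound $e^{M_{1}^{\beta_{0}}}$ from \eqref{eq: scatterandregular}, and then try to absorb via $t\leq M_{1}^{-\beta_{4}}$. But $M_{1}^{-\beta_{4}}$ is only \emph{polynomially} small in $M_{1}$, whereas absorption would force $t\lesssim e^{-cM_{1}^{\beta_{0}}}$; no fixed $\beta_{4}$ can achieve this (your closing remark that ``$M_{1}^{-\beta_{4}}$ must be taken exponentially small'' is a contradiction in terms). The paper's fix is to invoke the \emph{local} theory, \eqref{eq: lt}: on $[0,M_{1}^{-100}]$ one has $\|u\|_{L^{\infty}_{t}H^{s}}\lesssim M_{1}$, a polynomial bound. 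This is what makes the $\int_{0}^{t/2}$ estimate work (via $\|u\|_{L^{3}_{x}}^{3}\lesssim \|u\|_{H^{s}}^{3}\lesssim M_{1}^{3}$, giving the $M_{1}^{3}t^{-1/2}$ in \eqref{eq: t3l1}) and what allows the near-diagonal term $CA(t)t^{-3/2}t^{0+}M_{1}^{2}$ to be absorbed for a fixed large $\beta_{4}$. Your alternative of using the $L^{6}$ decay $\|u(s)\|_{L^{6}_{x}}\lesssim M_{1}s^{-1}$ for the $\int_{0}^{t/2}$ piece also fails, since that bound blows up as $s\to 0$ and makes $\int_{0}^{t/2}\|u(s)\|_{L^{3}_{x}}^{3}\,ds$ diverge.
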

\begin{proof}[Proof of Lemma \ref{lem: thm3local}]
Recall Duhamel formula \eqref{eq: baseduhamel}, the linear part is still controlled via \eqref{eq: dl}.
By the local theory of \eqref{eq: cubicnls}, we have
\begin{equation}\label{eq: lt}
\|u\|_{L^{\infty}_{t}H^{s}[0,M_{1}^{-100}]}\lesssim M_{1}.	
\end{equation}
For the nonlinear part, we still split $\int_{0}^{t}=\int_{0}^{t/2}+\int_{t/2}^{0}$.
The first part is still controlled via
\begin{equation}\label{eq: ta}
\big\| \int_{0}^{t/2}e^{i(t-s)\Delta}N(u)ds \big\|_{L^{\infty}_x} \lesssim t^{-3/2}\int_{0}^{t/2}\|u(s)\|_{L_{x}^{3}}^{3}ds\lesssim t^{-1/2}M_{1}^{3}.
\end{equation}
In the last step of \eqref{eq: ta} we applied \eqref{eq: lt}.
For the second term, we are in some sense at the end point case when $s$ approaches $\frac{1}{2}$. Before we present more details, we want to mention that  one will see, since we are fixing  $s>\frac{1}{2}$, that we just stay away from the end point and we always have some rooms\footnote{ The easiest way to do a first check of the computation is to neglect $log$ convergence and pose all the $\kappa_{i}$ below as zero.}. 

We now analyze the second term. We pick $0<\kappa_{1},\kappa_{2}\ll 1$ small and fix them. We will decide their relative size later.

We may assume $\kappa_{1}\leq \frac{1}{2}(s-\frac{1}{2})$.

We use Sobolev embedding,
\begin{equation}
W^{\frac{1}{2}+\kappa_{1}, p_{1}}\rightarrow L^{\infty}	
\end{equation}
where $p_{1}=6-\kappa_{2}$, satisfying
\begin{equation}\label{eq: k1}
\frac{3}{p_{1}}< \frac{1}{2}+\kappa_{1}.
\end{equation}
(It is enough to further assume $\kappa_{1}>3\kappa_{2}$ for \eqref{eq: k1} to hold.)
We now estimate
\begin{equation}\label{eq: cs1}
\begin{aligned}
&\big\|\int_{\frac{t}{2}}^{t}e^{i(t-s)}N(u(s))ds\big\|_{L_{x}^{\infty}}\\
\leq &C\int_{t/2}^{t}\|e^{it\Delta}N(u)ds\|_{W^{\frac{1}{2}+\kappa_{1},p_{1}}}ds\\
\leq  &C\int_{t/2}^{t} (t-s)^{1+\frac{-2\kappa_{2}}{2(6-\kappa_{2})}}\|N(u)\|_{W^{s, p_{1}'}}ds.
\end{aligned}
\end{equation}
In the last step we apply dispersive estimate, and recall
$\|e^{it\Delta}\|_{L^{p_{1}'}\rightarrow L^{p_{1}}}\lesssim t^{-(1-\frac{\kappa_{2}}{2(6-\kappa_{2})})}$.
Now, we plug in the estimate
\begin{equation}
\|N(f)\|_{W^{s, p_{1}'}}\leq C\|f\|_{H^{s}}\|f\|_{L_{x}^{\infty}}\|f\|_{L_{x}^{p_{2}}},
\end{equation}
where $\frac{1}{p_{2}}+\frac{1}{2}=\frac{1}{p_{1}'}$, and one computes as  $p_{2}=\frac{4-\kappa_{2}}{12-2\kappa_{2}}=3-$, where we  have used the fractional Leibniz rule, Theorem A12 in \cite{kenig1993well}.
We now continue the estimate \eqref{eq: cs1} as 
\begin{equation}\label{eq: cs2}
\begin{aligned}
\leq & C\int_{t/2}^{t}(t-s)^{1-\frac{\kappa_{2}}{2(6-\kappa_{2})}}\|u\|_{H^{s}}\|u\|_{L_x^{\infty}}\|u\|_{L_x^{p_{2}}}ds\\
\leq &CA(t)t^{-3/2}\int_{t/2}^{t}(t-s)^{1-\frac{\kappa_{2}}{2(6-\kappa_{2})}}\|u(t)\|_{H^{s}}^{2} ds
\leq &CA(t)t^{-3/2}t^{\frac{\kappa_{2}}{2(6-\kappa_{2})}}M_{1}^{2},
\end{aligned}	
\end{equation}
since $t\leq M_{1}^{-\beta_{4}}$, thus when $\beta_{4}$ is large enough, one has \eqref{eq: cs1} bounded by $\frac{1}{2}A(t)t^{-3/2}$.\\
Combining this with \eqref{eq: ta}, \eqref{eq: dl}, Lemma \ref{lem: thm3local} follows.
\end{proof}

 We now cover the part when $t\leq 2M$, and again $M$ is a large number which will be chosen later.
\begin{lem}\label{lem: thm3local2}
For $t\leq 2M$, one has, for some $\beta_{5}$ large, 
\begin{equation}\label{eq: t3l2}
\|u(t)\|_{L_{t}^{\infty}}\leq M_{1}t^{-\frac{3}{2}}+(M_{1}^{\beta_{5}}+\ln M)e^{3M_{1}\beta_{0}}+\frac{1}{2}A(t)t^{-\frac{3}{2}}.
\end{equation}
\end{lem}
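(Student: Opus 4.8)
The plan is to refine the proof of Lemma~\ref{lem: thm3local} in exactly the way Lemma~\ref{lem: thm1local2} refines Lemma~\ref{lem: thm1local}: we move the cut of the Duhamel integral from time $t/2$ to time $t-M_1^{-\beta_4}$, where $\beta_4$ is the large exponent furnished by Lemma~\ref{lem: thm3local}, so that on every time-window of length $M_1^{-\beta_4}$ the local theory and the endpoint Sobolev manipulations of Lemma~\ref{lem: thm3local} remain available, and we then feed in the global a priori bounds \eqref{eq: scatterandregular} together with the pseudoconformal decay \eqref{eq: l6decay}. Writing the Duhamel formula \eqref{eq: baseduhamel}, the linear term is bounded by $CM_1t^{-3/2}$ via \eqref{eq: dl} (this is the first term of \eqref{eq: t3l2}), and it remains to estimate the nonlinear integral, which we split as $\int_0^t=\int_0^{t-M_1^{-\beta_4}}+\int_{t-M_1^{-\beta_4}}^t$. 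We may assume throughout that $t\ge M_1^{-\beta_4}$, since smaller $t$ are covered by Lemma~\ref{lem: thm3local}.

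On the far piece $\int_0^{t-M_1^{-\beta_4}}$ the kernel weight $(t-s)^{-3/2}$ is no longer singular, so the bare dispersive estimate gives
\[
\Big\|\int_0^{t-M_1^{-\beta_4}}e^{i(t-s)\Delta}N(u(s))\,ds\Big\|_{L_x^\infty}\lesssim \int_0^{t-M_1^{-\beta_4}}(t-s)^{-3/2}\|u(s)\|_{L_x^3}^3\,ds.
\]
Since $s>\tfrac12$ we have the Sobolev embedding $H^s\hookrightarrow L_x^3$, hence $\|u(s)\|_{L_x^3}^3\lesssim\|u\|_{L_t^\infty H^s}^3\lesssim e^{3M_1^{\beta_0}}$ by the second bound in \eqref{eq: scatterandregular}; as $\int_0^{t-M_1^{-\beta_4}}(t-s)^{-3/2}\,ds\lesssim (M_1^{-\beta_4})^{-1/2}=M_1^{\beta_4/2}$, this piece is $\lesssim M_1^{\beta_5}e^{3M_1^{\beta_0}}$ for $\beta_5$ large.

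For the near piece $\int_{t-M_1^{-\beta_4}}^t$ we run the Sobolev-embedding trick of Lemma~\ref{lem: thm3local}: pick $0<\kappa_1,\kappa_2\ll 1$ with $\kappa_1>3\kappa_2$ and $\kappa_1<s-\tfrac12$, use $W^{\frac12+\kappa_1,\,6-\kappa_2}\hookrightarrow L_x^\infty$, and apply the dispersive estimate with the gain $(t-s)^{-(1-\frac{\kappa_2}{2(6-\kappa_2)})}$ to reduce to
\[
\int_{t-M_1^{-\beta_4}}^t(t-s)^{-\left(1-\frac{\kappa_2}{2(6-\kappa_2)}\right)}\big\|\langle\nabla\rangle^{\frac12+\kappa_1}N(u(s))\big\|_{L_x^{(6-\kappa_2)'}}\,ds.
\]
The key point, and where the argument departs from the $H^1$ case, is that here the only a priori regularity control on $u$ is the \emph{exponential} bound $\|u\|_{L_t^\infty H^s}\lesssim e^{M_1^{\beta_0}}$; so we must place the fractional derivative (via the fractional Leibniz rule, Theorem~A--12 in \cite{kenig1993well}) on a single factor of $|u|^2u$, producing $\|u\|_{H^s}$, and estimate the remaining two factors in $L_x^6$ by the pseudoconformal decay \eqref{eq: l6decay} — so that the exponential factor multiplies a quantity which is bounded (indeed decaying) in $t$ rather than the bootstrap quantity $A(t)$. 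Concretely, since $\tfrac{1}{(6-\kappa_2)'}=\tfrac12+\tfrac16+\tfrac16$ up to the $\kappa_2$-correction, one gets
\[
\big\|\langle\nabla\rangle^{\frac12+\kappa_1}N(u(s))\big\|_{L_x^{(6-\kappa_2)'}}\lesssim \|u(s)\|_{H^s}\,\|u(s)\|_{L_x^6}^2\lesssim e^{M_1^{\beta_0}}M_1^2 s^{-2},
\]
using $H^s\hookrightarrow H^{\frac12+\kappa_1}$. The integral of the kernel weight over the window of length $M_1^{-\beta_4}$ is $\lesssim \kappa_2^{-1}$, and choosing $\kappa_2\sim 1/\ln M$ makes it $\lesssim \ln M$; since $s\ge M_1^{-\beta_4}$ on the window, the near piece is thus $\lesssim (\ln M)\,e^{M_1^{\beta_0}}M_1^{2+2\beta_4}\lesssim (\ln M)\,e^{3M_1^{\beta_0}}$. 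Adding the linear, far and near contributions yields \eqref{eq: t3l2}, the slack term $\tfrac12A(t)t^{-3/2}$ being available to absorb any portion one prefers to bound by the bootstrap quantity (e.g. reinstating one $L_x^\infty$ factor with $\|u(s)\|_{L_x^\infty}\le A(s)s^{-3/2}$ in place of an $L_x^6$ factor) rather than by pseudoconformal decay.

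The main obstacle is precisely the contrast with Lemma~\ref{lem: thm1local2}: there the global regularity bound is polynomial in $M_1$, so a small power of $M_1^{-\beta_4}$ from the short window already beats it; here it is exponential in $M_1$, so one cannot afford to have $\|u\|_{L_t^\infty H^s}$ multiply $A(t)t^{-3/2}$, and the resolution is to allocate the Leibniz derivative and the Hölder exponents so that the exponential sits only on the (time-bounded) second term of \eqref{eq: t3l2} while the remaining factors are controlled by \eqref{eq: l6decay}, which costs only polynomial powers of $M_1$. A secondary nuisance is the endpoint bookkeeping of $\kappa_1,\kappa_2$ as $s\downarrow\tfrac12$; since $s>\tfrac12$ is fixed there is always room, and formally setting $\kappa_1=\kappa_2=0$ (ignoring the logarithmic losses) gives a quick consistency check of all the exponents.
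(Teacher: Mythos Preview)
Your argument is correct but follows a genuinely different route from the paper's. The paper splits the Duhamel integral at $t-e^{-M_1^{\beta_6}}$ (an \emph{exponentially} short window), not at $t-M_1^{-\beta_4}$: on the far piece it uses a $(t-s)^{-1}$--type dispersive bound with $\|u\|_{H^s}^3\le e^{3M_1^{\beta_0}}$, whose time integral over $[0,t-e^{-M_1^{\beta_6}}]$ produces the $(M_1^{\beta_6}+\ln M)e^{3M_1^{\beta_0}}$ term; on the near piece it keeps one $L_x^\infty$ factor controlled by $A(t)t^{-3/2}$ and two $H^s$ factors costing $e^{2M_1^{\beta_0}}$, and it is the exponential smallness of the window that furnishes a factor $e^{-cM_1^{\beta_6}}$ beating $e^{2M_1^{\beta_0}}$ so that the coefficient of $A(t)t^{-3/2}$ is $\le\tfrac12$. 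Your approach instead keeps a polynomial window and bypasses $A(t)$ entirely in the near piece by invoking the pseudoconformal $L^6$ decay \eqref{eq: l6decay} for both undifferentiated factors; this is cleaner in that the bootstrap quantity never appears here, at the cost of calling on \eqref{eq: l6decay} one lemma earlier than the paper does. Two small bookkeeping points: (i) for $\kappa_2>0$ the exponents $\tfrac12+\tfrac16+\tfrac16=\tfrac56$ strictly exceed $\tfrac{1}{(6-\kappa_2)'}$, so the literal H\"older split forces $L_x^{6+}$ on one factor, which \eqref{eq: l6decay} does not give; this is harmless because the slack $s>\tfrac12+\kappa_1$ lets you absorb the $O(\kappa_2)$ defect by one extra Sobolev step (place the derivative factor in $L_x^{2+}$, or equivalently embed $W^{O(\kappa_2),6/5}\hookrightarrow L^{(6-\kappa_2)'}$ before Leibniz), which is what your appeal to $\|u\|_{H^s}$ rather than $\|u\|_{H^{1/2+\kappa_1}}$ is implicitly doing; (ii) the choice $\kappa_2\sim 1/\ln M$ is unnecessary---any fixed small $\kappa_2$ depending only on $s-\tfrac12$ makes the kernel integral $O_s(1)$, which already fits under the stated $(M_1^{\beta_5}+\ln M)$ budget.
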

\begin{proof}[Proof of Lemma \ref{lem: thm3local2}]
We may assume $t\geq M_{1}^{-\beta_{4}}$, otherwise we use Lemma \ref{lem: thm3local}.  Recall again \eqref{eq: baseduhamel}, this time, we split the nonlinear part as $\int_{0}^{t}=\int_{0}^{t-e^{-M_{1}^{\beta_{6}}}} $ and $\int_{t-e^{-M_{1}^{\beta_{6}}}}^{t}$.

Now, one estimates the first part as 
\begin{equation}\label{eq: againfirst}
\int_{0}^{t-e^{-M_{1}^{\beta_{6}}}}\|e^{i(t-s)}N(u)\|_{L_{x}^{\infty}}ds\\
\leq C\int_{0}^{t-e^{-M_{1}^{\beta_{6}}}}(t-s)^{-1}\|u\|_{H^{s}}^{3}ds\\
\leq C(M_{1}^{\beta_{6}}+\ln M)e^{3M_{1}^{\beta_{0}}}.
\end{equation}
In the last step we have plugged in \eqref{eq: scatterandregular}.
For the second part, we estimate similarly as \eqref{eq: cs1}, \eqref{eq: cs2}, (choosing $\kappa_{1}, \kappa_{2}$ as in the proof of Lemma \ref{lem: thm3local}), 
\begin{equation}
\begin{aligned}
&\|\int_{t-e^{-M_{1}^{\beta_{6}}}}^{t}e^{i(t-s)\Delta}N(u(s))ds\|_{L_{x}^{\infty}}\\
\leq &C\int_{t-e^{-M_{1}^{\beta_{6}}}}^{t}(t-s)^{1-\frac{\kappa_{2}}{2(6-\kappa_{2})}}\|u\|_{H^{s}}^{2}ds\\
\leq &Ce^{-\frac{\kappa_{2}}{2(6-\kappa_{2})}M_{1}^{\beta_{6}}}e^{2M_{1}^{\beta_{0}}}A(t)t^{-3/2}.
\end{aligned}
\end{equation}
Thus, when $\beta_{6}$ is large enough, so that $Ce^{-\frac{\kappa_{2}}{2(6-\kappa_{2})}M_{1}^{\beta_{6}}}e^{2M_{1}^{\beta_{0}}}\leq \frac{1}{2}$, then one can combine it with \eqref{eq: againfirst} and \eqref{eq: dl} to obtain the desired estimate.

\end{proof}

We now present an analogue of Lemma  \ref{lem: eapriori},
\begin{lem}\label{lem: thm3eap}
For $t\geq 2M$, one has 
\begin{equation}\label{eq: t3l3}
\|u(t)\|_{L_{x}^{\infty}}\leq Ct^{-3/2}Me^{3M_{1}^{\beta_{0}}}+\frac{1}{2}A(t)t^{-3/2}+ CA(t)t^{-\frac{3}{2}}e^{M_{1}^{\beta_{0}}}M^{-(\frac{1}{2}-)}.
\end{equation}
\end{lem}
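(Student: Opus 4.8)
The plan is to mimic the decomposition used in Lemma \ref{lem: eapriori}, writing the Duhamel formula for $t\geq 2M$ as $u(t)=e^{it\Delta}u_0+F_1+F_2+F_3$ with $F_1=-i\int_0^M e^{i(t-s)\Delta}N(u)\,ds$, $F_2=-i\int_M^{t-M}e^{i(t-s)\Delta}N(u)\,ds$, and $F_3=-i\int_{t-M}^t e^{i(t-s)\Delta}N(u)\,ds$. The linear term is controlled by the dispersive estimate as in \eqref{eq: dl}, contributing $CM_1 t^{-3/2}$ (absorbed into the first term on the right of \eqref{eq: t3l3} since $M\geq 1$). For $F_1$, since $t-s\gtrsim t-M\gtrsim t$ throughout $[0,M]$, the standard dispersive estimate together with the Sobolev bound $\|u(s)\|_{L^3_x}\lesssim\|u(s)\|_{H^s}$ and \eqref{eq: scatterandregular} gives $\|F_1\|_{L^\infty_x}\lesssim t^{-3/2}\int_0^M\|u(s)\|_{H^s}^3\,ds\lesssim t^{-3/2}M e^{3M_1^{\beta_0}}$, which is exactly the first displayed term in \eqref{eq: t3l3}.

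For $F_2$ I would use that we are in dimension $3$, so $(t-s)^{-3/2}$ is integrable away from the endpoints. Applying the dispersive estimate with the splitting $\|N(u)\|_{L^1_x}\lesssim\|u\|_{L^\infty_x}\|u\|_{L^2_x}^2$ and inserting the definition of $A(s)$ together with mass conservation gives
\begin{equation*}
\|F_2\|_{L^\infty_x}\lesssim\int_M^{t-M}(t-s)^{-3/2}A(s)s^{-3/2}M_1^2\,ds\lesssim A(t)t^{-3/2}M_1^2\int_M^{\infty}\sigma^{-3/2}\,d\sigma\lesssim A(t)t^{-3/2}M_1^2 M^{-1/2},
\end{equation*}
and by choosing $M$ a sufficiently large polynomial of $M_1$ this is bounded by $\tfrac{1}{2}A(t)t^{-3/2}$, yielding the middle term of \eqref{eq: t3l3}. (Here one uses $s\sim t$ on the relevant range when convenient, exactly as in the proof of Theorem \ref{thm2}.)

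For $F_3$ I would repeat the Sobolev-then-dispersive-then-H\"older scheme of Lemma \ref{lem: thm3local2}: using $W^{\frac12+\kappa_1,p_1}\hookrightarrow L^\infty$ with $p_1=6-\kappa_2$, then the dispersive estimate $\|e^{it\Delta}\|_{L^{p_1'}\to L^{p_1}}\lesssim t^{-(1-\frac{\kappa_2}{2(6-\kappa_2)})}$, then the fractional Leibniz rule $\|N(u)\|_{W^{s,p_1'}}\lesssim\|u\|_{H^s}\|u\|_{L^\infty_x}\|u\|_{L^{p_2}_x}$ with $p_2=3-$. This produces, after bounding $\|u\|_{L^\infty_x}\leq A(t)t^{-3/2}$ and $\|u\|_{H^s}\leq e^{M_1^{\beta_0}}$ and integrating $(t-s)^{1-\frac{\kappa_2}{2(6-\kappa_2)}}$ over the window of length $M$ (note $s\sim t$ there since $t\geq 2M$),
\begin{equation*}
\|F_3\|_{L^\infty_x}\lesssim A(t)t^{-3/2}e^{M_1^{\beta_0}}M^{-(\frac12-)},
\end{equation*}
which is the last term in \eqref{eq: t3l3}. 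Here, in contrast to the short-time lemmas, I keep the $M^{-(\frac12-)}$ gain explicit rather than absorbing it, since later (in concluding Theorem \ref{thm3}) one wants to feed this into a Gronwall-type argument after choosing $M$; the point is that the $L^6$-decay \eqref{eq: l6decay} makes the analogue of \eqref{eq: atlongrefined} close. Collecting the four contributions gives \eqref{eq: t3l3}. The main obstacle is bookkeeping the parameters $\kappa_1,\kappa_2$ (and $\beta_4,\beta_5,\beta_6$) so that all the "$-$/$+$" exponents remain on the correct side of integrability while $s>\tfrac12$ stays bounded away from the critical endpoint; as the footnote in Lemma \ref{lem: thm3local} suggests, setting all $\kappa_i=0$ first makes the structure transparent and then one restores the small losses.
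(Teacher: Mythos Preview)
Your treatment of the linear term, $F_1$, and $F_2$ matches the paper's proof. The gap is in $F_3$.

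You write the Leibniz factorization $\|N(u)\|_{W^{s,p_1'}}\lesssim\|u\|_{H^s}\|u\|_{L^\infty_x}\|u\|_{L^{p_2}_x}$ with $p_2=3-$, then bound $\|u\|_{L^\infty_x}\leq A(t)t^{-3/2}$ and $\|u\|_{H^s}\leq e^{M_1^{\beta_0}}$, and integrate $(t-s)^{-(1-\frac{\kappa_2}{2(6-\kappa_2)})}$ over $[t-M,t]$. But you never say how you bound the third factor $\|u\|_{L^{p_2}_x}$. If you estimate it by $\|u\|_{H^s}\lesssim e^{M_1^{\beta_0}}$ as in Lemma~\ref{lem: thm3local2}, the time integral contributes only $M^{\frac{\kappa_2}{2(6-\kappa_2)}}=M^{0+}$, and you end up with
\[
\|F_3\|_{L^\infty_x}\lesssim A(t)\,t^{-3/2}\,e^{2M_1^{\beta_0}}\,M^{0+},
\]
which is a \emph{loss} in $M$, not the gain $M^{-(\frac12-)}$ claimed in \eqref{eq: t3l3}. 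No choice of large $M$ will then let you absorb this term.

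The paper's point (and this is what distinguishes Lemma~\ref{lem: thm3eap} from Lemma~\ref{lem: thm3local2}) is that on $[t-M,t]$ with $t\geq 2M$ one must feed the pseudo-conformal $L^6$ decay \eqref{eq: l6decay} directly into the $L^{p_2}$ factor: interpolating $\|u\|_{L^{3-}_x}\lesssim\|u\|_{L^2_x}^{\frac12+}\|u\|_{L^6_x}^{\frac12-}\lesssim M_1\,s^{-(\frac12-)}$, and since $s\sim t\geq 2M$ this gives an extra $t^{-(\frac12-)}\lesssim M^{-(\frac12-)}$, which combines with the $M^{0+}$ from the time integral to produce the stated $M^{-(\frac12-)}$. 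Your remark that ``the $L^6$-decay \eqref{eq: l6decay} makes the analogue of \eqref{eq: atlongrefined} close'' places this input \emph{after} the lemma, but in the $H^s$ setting there is no subsequent Gronwall step; the $L^6$ decay has to be used \emph{inside} the $F_3$ estimate itself, and the proof of Theorem~\ref{thm3} then concludes by a direct choice of $M\sim e^{M_1^{\beta_8}}$ so that $e^{M_1^{\beta_0}}M^{-(\frac12-)}\ll 1$.
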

\begin{proof}[Proof of Lemma \ref{lem: thm3eap}]
We write again,
\begin{eqnarray*}
u(t)&=&e^{it\Delta}u_{0}-i\int_{0}^{M}e^{i(t-s)\Delta}N(u)ds-i\int_{M}^{t-M}e^{i(t-s)\Delta}N(u(s))ds-i\int_{t-M}^{t}e^{i(t-s)\Delta}N(u(s))ds\\
&:=&e^{it\Delta}u_{0}+F_{1}+F_{2}+F_{3},
\end{eqnarray*}
and estimate $F_{1},F_{2},F_{3}$.
The terms $F_{1}, F_{2}$ will be estimated similarly as in the proof of Lemma \ref{lem: eapriori}.
We have for $F_{1}$,
\begin{equation}
\|F_{1}(t)\|_{L_{x}^{\infty}}ds\leq \int_{0}^{M}\|e^{i(t-s)\Delta}N(u(s))\|_{L_{x}^{\infty}}\leq Ct^{-3/2}M\|u\|_{L_{t}^{\infty}H^{s}}^{3}\leq Ct^{-3/2}Me^{3M_{1}^{\beta_{0}}}.	
\end{equation}
We have for $F_{2}$, exactly as in the proof of \eqref{eq: f2absorb}
\begin{equation}\label{eq: f2absorbonemore}
\|F_{2}\|_{L_{x}^{\infty}}\leq \frac{1}{2}A(t)t^{-3/2}.
\end{equation}
For $F_{3}$, compared to the proof of Lemma \ref{lem: thm3local}, \ref{lem: thm3local2}, it is important now for us to also apply \eqref{eq: l6decay} in the estimate of $F_{3}$.
Recall $0<\kappa_{1}, \kappa_{2}\ll 1$ defined in the proof of Lemma \ref{lem: thm3local}, and $p_{1}=6-\kappa_{2}, p_{2}=\frac{4-\kappa_{2}}{12-2\kappa_{2}}$,
we estimate $F_{3}$ via
\begin{equation}\label{eq: f3onemore}
\begin{aligned}
&\int_{t-M}^{t}e^{i(t-s)\Delta}N(u(s))\|_{L_{x}^{\infty}}ds\\
\leq &C\int_{t-M}^{t}\|e^{i(t-s)\Delta}N(u(s))\|_{W^{\frac{1}{2}+\kappa_{1},3}}ds\\
\leq &C\int_{t-M}^{t}(t-s)^{-(1-\frac{\kappa_{2}}{2(6-\kappa_{2})})}\|u\|_{H^{s}}\|u\|_{L_x^{p_{2}}}\|u\|_{L_x^{\infty}}ds\\
\leq &CA(t)t^{-3/2}t^{-(\frac{1}{2}-\kappa_{3}))}M_{1}e^{M_{1}^{\beta_{0}}}\int_{t-M}^{t}(t-s)^{-(1-\frac{\kappa_{2}}{2(6-\kappa_{2})})}ds\\
\leq &CA(t)t^{-\frac{3}{2}}t^{-(\frac{1}{2}-\kappa_{2})}M^{\frac{\kappa_{2}}{2(6-\kappa_{2})}}.
\end{aligned}
\end{equation}
Above we used a  simple interpolation to conclude that 
$$\|u\|_{L_x^{p_{2}}}=\|u\|_{L_x^{3-}}\leq C\|u\|_{L_x^2}^{\frac{1}{2}+}\|u\|_{L_x^6}^{\frac{1}{2-}}\leq t^{-(\frac{1}{2}-\kappa_{3})}M_{1}$$ for some $\kappa_{1}>0$ and we plug the result  in \eqref{eq: scatterandregular}.

Now, given $t\geq 2M$, one summarizes estimate \eqref{eq: f3onemore} as 
\begin{equation}\label{eq: f3}
\|F_{3}\|_{L_{x}^{\infty}}\leq CA(t)t^{-\frac{3}{2}}e^{M_{1}^{\beta_{0}}}M^{-(\frac{1}{2}-)}.	
\end{equation}
\end{proof}
Summarizing Lemma \ref{lem: thm3local}, \ref{lem: thm3local2}, \ref{lem: thm3eap}, 
 choosing $M=e^{M_{1}^{\beta_{8}}}$ so that \eqref{eq: f2absorbonemore}  holds, we have that \eqref{eq: f3onemore} reads
 as 
 \begin{equation}
 \|F_{3}(t)\|_{L_{x}^{\infty}}\leq \frac{1}{10}A(t)t^{-3/2}.	
 \end{equation}
 As a consequence we have
 \begin{equation}
 u(t)\leq CM_{1}t^{-3/2}+M^{\beta_{9}}e^{CM_{1}^{\beta_{0}}}t^{-\frac{3}{2}}+\frac{3}{2}A(t)t^{-3/2}
 \end{equation}
 i.e.
 \begin{equation}
 A(t)\leq CM^{\beta_{9}}e^{CM_{1}^{\beta_{0}}}\lesssim e^{M_{1}^{\beta}},
 \end{equation}
for some $\beta>0$. This concludes the proof of Theorem \ref{thm3}.

\section{Proof of Theorem \ref{thm4}}\label{5}
What we want to present here, is that one can systematically remove the $L^{1}$ assumptions in the Theorem \ref{thm1}, \ref{thm2}, \ref{thm3}, by randomizing the initial data.
We will only prove the case for estimate \eqref{eq: thm1ran}, the other two cases can be generalized from Theorem \ref{thm2}, \ref{thm3} respectively.

We will fix a constant $A$, large.
We will use the terminology $A$-certain in \cite{de2002effect}, i.e we say an event is $A$-certain if it holds up to a set with small probability $e^{-A^{\alpha}}$. (The exact value of $\alpha$ may change line by line, but at the end, one only choose the smallest $\alpha$ involved.)

While we cannot conclude\footnote{We cannot even conclude \eqref{eq: rdispersive} if one replaces $\frac{3}{2}-$ for  $\frac{3}{2}$).} that \eqref{eq: rdispersive} holds  $A$-certainly,\begin{equation}\label{eq: rdispersive}
\|e^{it\Delta}u_{0}^{\omega}\|_{L_{x}^{\infty}}\lesssim A^{\beta}t^{-(\frac{3}{2})}, \forall t\geq 0.	
\end{equation}

We can prove, in some sense, a time average version of \eqref{eq: rdispersive}. We present more details below.
  We need to introduce a weight
 \begin{equation}\label{eq: weight}
 \gamma_{p,\epsilon}=\frac{t^{100}}{1+t^{100}}t^{3(\frac{1}{2}-\frac{1}{p})-\epsilon}.	
 \end{equation}
 
 \begin{lem}\label{lem: ardis}
 For all $2<p<\infty$, for all $\epsilon_{1}>\epsilon_{2}$, then $A$-certainly\footnote{The $\alpha$ depend on $p,\epsilon_{1}, \epsilon_{2}$ though.}, 
 \begin{equation}\label{eq: ardis}
 \|\gamma_{p,\epsilon_{1}}e^{it\Delta}u_{0}^{\omega}\|_{L_{t}^{\epsilon_{2}^{-1}}L_{x}^{p}[0,\infty)}\leq A,
 \end{equation}
 similarly, one also has 
 \begin{equation}\label{eq: ardis2}
 \|\gamma_{p,\epsilon_{1}}e^{it\Delta}\nabla u_{0}^{\omega}\|_{L_{t}^{\epsilon_{2}^{-1}}L_{x}^{p}[0,\infty)}\leq A.
 \end{equation}
 \end{lem}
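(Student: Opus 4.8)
The plan is to prove Lemma \ref{lem: ardis} by combining the randomization structure with the dispersive estimate, exactly as one proves local smoothing / improved Strichartz bounds for physical-space randomized data. The weight $\gamma_{p,\epsilon_1}$ is designed to be bounded near $t=0$ (the factor $t^{100}/(1+t^{100})$ kills the singularity of $t^{3(1/2-1/p)-\epsilon_1}$ at the origin, since the exponent is positive) and to grow like $t^{3(1/2-1/p)-\epsilon_1}$ for large $t$, so that when paired with the dispersive decay $t^{-3(1/2-1/p)}$ of $e^{it\Delta}$ acting on $L^1$-type pieces one gains an integrable-in-time factor $t^{-\epsilon_1}$, which after the time integration in $L_t^{\epsilon_2^{-1}}$ (with $\epsilon_2<\epsilon_1$, hence the Lebesgue exponent $\epsilon_2^{-1}$ large) is summable. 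So first I would fix $p$, $\epsilon_1>\epsilon_2$, and reduce to a deterministic estimate by the standard large-deviation lemma (Lemma 3.1-type in \cite{de2002effect} or \cite{burq2008random}): for i.i.d. Gaussians $g_n$, $\big\| \sum_n g_n(\omega) F_n \big\|_{L^q}$ is, $A$-certainly (up to probability $e^{-A^\alpha}$), bounded by $A$ times $\big( \sum_n |F_n|^2 \big)^{1/2}$ in the relevant norm, for any Banach space in which Minkowski's inequality can be applied when $q\geq 2$. Applying this with $F_n = \gamma_{p,\epsilon_1} e^{it\Delta}(\phi_n u_0)$ and the space $L_t^{\epsilon_2^{-1}}L_x^p$, it suffices to bound the square function $\big\| \gamma_{p,\epsilon_1} \big( \sum_n |e^{it\Delta}(\phi_n u_0)|^2 \big)^{1/2} \big\|_{L_t^{\epsilon_2^{-1}}L_x^p}$ by $O(1)$ (since $\|u_0\|_X=1$).

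The next step is to estimate that square function. Move the $\ell^2_n$ norm inside (Minkowski, valid since $p\geq 2$ and $\epsilon_2^{-1}\geq 2$): it is enough to show $\big( \sum_n \| \gamma_{p,\epsilon_1} e^{it\Delta}(\phi_n u_0)\|_{L_t^{\epsilon_2^{-1}}L_x^p}^2 \big)^{1/2} \lesssim 1$. For each fixed $n$, $\phi_n u_0$ is essentially an $L^1$ function (compactly supported bump times $u_0$), with $\|\phi_n u_0\|_{L^1} \lesssim \|\phi_n u_0\|_{L^2}$ by Cauchy-Schwarz on the unit cube, and $\sum_n \|\phi_n u_0\|_{L^2}^2 \sim \|u_0\|_{L^2}^2 \leq 1$. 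Now split the time integral at $t=1$. For $t\leq 1$, $\gamma_{p,\epsilon_1}$ is bounded and we use the Strichartz estimate $\|e^{it\Delta}(\phi_n u_0)\|_{L_t^q L_x^p([0,1])} \lesssim \|\phi_n u_0\|_{L^2}$ for an admissible pair; interpolating/choosing $\epsilon_2$ small makes $\epsilon_2^{-1}\geq q$ admissible on the bounded interval. For $t\geq 1$, use the dispersive estimate $\|e^{it\Delta}(\phi_n u_0)\|_{L^p_x} \lesssim t^{-3(1/2-1/p)}\|\phi_n u_0\|_{L^{p'}} \lesssim t^{-3(1/2-1/p)}\|\phi_n u_0\|_{L^1}$ (last step since $\phi_n u_0$ is supported on a unit cube so $L^1 \hookrightarrow L^{p'}$), so that $\gamma_{p,\epsilon_1}\|e^{it\Delta}(\phi_n u_0)\|_{L^p_x} \lesssim t^{-\epsilon_1}\|\phi_n u_0\|_{L^1}$, and $\|t^{-\epsilon_1}\|_{L_t^{\epsilon_2^{-1}}[1,\infty)} < \infty$ precisely because $\epsilon_1 \epsilon_2^{-1} > 1$. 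Summing the squares over $n$ and using $\sum_n \|\phi_n u_0\|_{L^2}^2 \lesssim 1$ closes the estimate. Estimate \eqref{eq: ardis2} is identical after noting that $\nabla(\phi_n u_0) = (\nabla\phi_n) u_0 + \phi_n \nabla u_0$, both of which are again unit-cube-supported $L^2$ functions with $\ell^2_n$-summable $L^2$ norms controlled by $\|u_0\|_{H^1}\leq 1$, so the same argument applies verbatim.

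The main obstacle, and the place requiring care, is the endpoint bookkeeping: one must choose $\epsilon_2$ small enough relative to $\epsilon_1$ and $p$ so that (i) on $[0,1]$ the Lebesgue exponent $\epsilon_2^{-1}$ is compatible with a genuine Strichartz (or just Bernstein + energy) bound — this is why the harmless $t^{100}/(1+t^{100})$ cutoff is inserted rather than leaving a negative power of $t$ near the origin — and (ii) on $[1,\infty)$ the decay $t^{-\epsilon_1}$ beats $L_t^{\epsilon_2^{-1}}$, i.e. $\epsilon_1 > \epsilon_2$ is exactly the borderline condition, so no room is wasted. A secondary subtlety is that the large-deviation estimate gives a bound with probability $1 - e^{-cA^2/\|(\sum|F_n|^2)^{1/2}\|^2}$, so one must track that the square function norm is $\lesssim 1$ (not growing with $A$) to conclude $A$-certainty with an exponent $\alpha$ depending only on $p,\epsilon_1,\epsilon_2$; since $\|u_0\|_X = 1$ this is automatic. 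None of the individual steps is deep — it is the standard physical-space randomization philosophy (random data behaves, in spacetime norms, like $L^1$ data) — but assembling the weight, the time-splitting, and the probability bound consistently is where the work lies.
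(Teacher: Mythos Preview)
Your approach is essentially the same as the paper's: large deviation estimate (Khinchin--type moment bound plus Chebyshev), Minkowski to pull the $\ell^2_n$ outside, then dispersive decay on each localized piece $\phi_n u_0$, and finally $\|\phi_n u_0\|_{L^{p'}}\lesssim \|\phi_n u_0\|_{L^2}$ by unit-cube support together with $\sum_n\|\phi_n u_0\|_{L^2}^2\sim\|u_0\|_{L^2}^2$.

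The one point to clean up is your treatment of $t\le 1$. Invoking a Strichartz pair $(q,p)$ there does not do what you want: for $p>6$ no such pair exists, and even for $2<p\le 6$ making $\epsilon_2$ small forces $\epsilon_2^{-1}\gg q$, so on a bounded interval H\"older goes the wrong way (you cannot pass from $L_t^q$ up to $L_t^{\epsilon_2^{-1}}$). The paper avoids this entirely by using the dispersive estimate for \emph{all} $t$: since $\gamma_{p,\epsilon_1}(t)\,t^{-3(\frac12-\frac1p)}=\frac{t^{100}}{1+t^{100}}\,t^{-\epsilon_1}$, the $t^{100}$ prefactor makes this bounded (indeed vanishing) near $t=0$ and $\sim t^{-\epsilon_1}$ for $t\ge 1$, so $\|\gamma_{p,\epsilon_1}(t)\,t^{-3(\frac12-\frac1p)}\|_{L_t^{\epsilon_2^{-1}}[0,\infty)}<\infty$ exactly when $\epsilon_1>\epsilon_2$. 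No time-splitting or Strichartz is needed; this is precisely the content of Remark~\ref{remweight} in the paper. With that single change your argument is complete and matches the paper's.
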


\begin{rem}\label{remweight}
Regarding the weight in \eqref{eq: weight}, one notes that if for some $f(t)\lesssim t^{-3(\frac{1}{2}-\frac{1}{p})}$, then
\begin{equation}
\|\gamma_{p,\epsilon}(t)f(t)\|_{L_{t}^{\frac{1}{\epsilon'}}}\lesssim 1, \quad\forall \epsilon'<\epsilon.
\end{equation}
Meanwhile, if $\|\gamma_{p,\epsilon}(t)f(t)\|_{L_{t}^{\frac{1}{\epsilon'}}}\lesssim 1$, then in time average sense, $\gamma_{p}f(t)\lesssim t^{-\epsilon'}$, thus  in time average sense, $f(t)\lesssim t^{-3(\frac{1}{2}-\frac{1}{p})-\epsilon+\epsilon'}$.
\end{rem}

We now turn to the proof of Lemma \ref{lem: ardis}, which is merely a combination of  Minkowski inequality and some standard large deviation estimate for Gaussian. 
\begin{proof}

It is enough to prove, for $\rho$ large and for $u_{0}^{\omega}=\sum_{n}g_{n}(\omega)\phi_{n}(x)u_{0}(x)$ that 
\begin{equation}\label{eq: ldpwork}
\|\gamma_{p,\epsilon_{1}}(t)e^{it\Delta}u_{0}^{\omega}\|_{L_{\omega}^{\rho}L_{t}^{\frac{1}{\epsilon_{2}}}L_{x}^{p}}\lesssim \sqrt{\rho}\|u_{0}\|_{L^2_x}.
\end{equation}
Then the desired $A$-certain claim comes from usual Chebyshev inequality.
Recall one has, see for example, Lemma 3.1 in \cite{burq2008random},
\begin{equation}\label{eq: ldpstandard}
\|\sum_{n}c_{n}g_{n}(\omega)(\omega)\|_{L_{\omega}^{\rho}}\lesssim \rho^{1/2}(\sum_{n}c_{n}^2)^{1/2}.
\end{equation}
 
Thus, by Minkowski's inequality, we have
\begin{equation}\label{eq: etr1}
\begin{aligned}
&\|\gamma_{p,\epsilon_{1}}(t)e^{it\Delta}u_{0}^{\omega}\|_{L_{\omega}^{\rho}L_{t}^{\frac{1}{\epsilon_{2}}}L_{x}^{p}}\\
\lesssim &\|\|\sum_{n}\gamma_{p,\epsilon}(t)e^{it\Delta}\phi_{n}u_{0}g_{n}(\omega)\|_{L_{\omega}^{\rho}}\|_{L_{t}^{\frac{1}{\epsilon_{2}}}L_{x}^{p}}\\
\lesssim &\rho^{1/2}\|(\sum_{n}|\gamma_{p,\epsilon_{1}}(t)e^{it\Delta}(\phi_{n}u_{0})|^{2})^{1/2}\|_{L_{t}^{\frac{1}{\epsilon_{2}}}L_{x}^{p}},
\end{aligned}
\end{equation}
where in the last step, we apply \eqref{eq: ldpstandard}.
By applying Minkowski inequality again, we have 
\begin{equation}\label{eq: etr2}
\begin{aligned}
&\|(\sum_{n}|\gamma_{p,\epsilon_{1}}(t)e^{it\Delta}(\phi_{n}u_{0})|^{2})^{1/2}\|_{L_{t}^{\frac{1}{\epsilon_{2}}}L_{x}^{p}}\\
\leq &(\sum_{n}\||\gamma_{p,\epsilon_{1}}(t)e^{it\Delta}(\phi_{n}u_{0})|\|_{L_{t}^{\frac{1}{\epsilon_{2}}}L_{x}^{p}}\|^{2})^{1/2}\\
\lesssim &(\sum_{n}\|\phi_{n}u_{0}\|_{L_{x}^{p'}}^{2})^{1/2}
\\
\lesssim &(\|\sum_{n}\phi_{n}u_{0}\|_{L_{x}^{2}}^{2})^{1/2},
\end{aligned}
\end{equation}
where in the second inequality we use the Remark \ref{remweight} and dispersive estimate.

Estimates \eqref{eq: etr1}, \eqref{eq: etr2} gives \eqref{eq: ldpwork}, and the Lemma \ref{lem: ardis} thus follows.
\end{proof}

We also note that $A$-certainly 
\begin{equation}\label{eq: tr}
\|u_{0}^{\omega}\|_{H^{1}}\leq A.
\end{equation}

Thus, we have that $A$-certainly, for some $\beta_{0}>0$,
\begin{equation}\label{eq: scatteringnorm}
\begin{aligned}
&\|u^{\omega}\|_{L_{t,x}^{5}}^{5}\leq A^{\beta_{0}}, \quad \|u^{\omega}\|_{L_{t}^{\infty}H^{1}}\leq A,\\
&\|u_{nl}^{\omega}\|_{L_{t,x}^{5}}^{5}\leq A^{\beta_{0}}, \quad \|u_{nl}^{\omega}\|_{L_{t}^{\infty}H^{1}}\leq A,
\end{aligned}
\end{equation}
where  $u_{nl}^{\omega}=u^{\omega}-e^{it\Delta}u_{0}^{\omega}.$

We will later only relies on $\omega$-wise estimate \eqref{eq: ardis}, \eqref{eq: ardis2},\eqref{eq: scatteringnorm}. For notation's convenience, we fix $\omega$ and omit this $\omega$, we denote $v=e^{it\Delta}u_{0}^{\omega}$ and $w=u_{nl}^{\omega}$. We write down the Duhamel formula,
\begin{equation}\label{eq: secduhamel}
\begin{aligned}
w(t)=-i\int_{0}^{t}e^{i(t-s)\Delta}|v(s)|^{2}v(s)ds-i\int_{0}^{t}e^{i(t-s)\Delta}(|w+v|^{2}(w+v)-|v|^{2}v)ds.
\end{aligned}
\end{equation}
At this time, we  focus on the bootstrap estimate for 
\begin{equation}
A(t):=\sup_{s\leq t}s^{-3/2}\|w(s)\|_{L_{x}^{\infty}}.
\end{equation}

We explain the heuristic why such a generalization will work. 
If $v$ satisfies the exact estimates for some linear solution $e^{it\Delta}f$, $f\in H^{1}\cap L^{1}$, then this is exactly the same proof (for the same problem) as Theorem \ref{thm1}. Here, however, all $v$ involved in \eqref{eq: secduhamel} are in the integral. Thus, for our problem, it is enough for $v$ to satisfy the estimates $e^{it\Delta}f$ in the  time average sense, see Remark \ref{remweight}. There is some $t^{-\epsilon}$ loss but it does not matter since we are never in a critical situation in our setting. We remark that  it remains a very interesting problem to understand the long time dynamic for randomized in physical space $H^{1/2}$ initial data from a quantitative view point.

We carry out the a priori estimate for $w$ and $A(t)$, and later we will also need a parameter $M$.
We write \eqref{eq: secduhamel} as 
\begin{equation}
w=G_{1}+G_{2}+G_{3}+G_{4}
\end{equation}
where\footnote{Here we are slightly abusing  the $O$ notation. $O(fg)$ means the term can be estimated by $fg$, $\bar{f}g$, $f\bar{g}$, $\bar{f}\bar{g}$, and similarly for $O(fgh)$.}
\begin{equation}
\begin{aligned}
G_{1}=-i\int_{0}^{t}e^{i(t-s)\Delta}|v(s)|^{2}v(s)ds,\\
G_{2}=-i\int_{0}^{t}e^{i(t-s)\Delta}|w|^{2}wds,\\
G_{3}=i\int_{0}^{t}e^{i(t-s)\Delta}(O(w^{2}v(s)))ds,\\
G_{4}=i\int_{0}^{t}e^{i(t-s)\Delta}(O(wv^{2}(s)))ds.
\end{aligned}
\end{equation}
The $G_{1}$ part will be addressed in Lemma \ref{lem: g1}.
 The $G_{2}$ can be estimated exactly as in the proof of Theorem \ref{thm1}, i.e. Lemma \ref{lem: thm1local},
 \ref{lem: thm1local2}, \ref{lem: eapriori}.
But if one thinks more carefully, one sees that $G_{3}$ can also be estimated exactly as in the proof of Theorem \ref{thm1}. The only different part may appear in the estimate for $\int_{t-M}^{t}O(w^{2}v)$, which involves the estimate for $\nabla (w^{2}v)$, but in this part, there is still one free $w$ left for us to apply the bootstrap control of $A(t)$, thus one can still estimate it similarly. 
For the $G_{4}$ part, as discussed above, one only needs to estimate the part
\begin{equation}
\int_{t-M}^{t}e^{i(t-s)\Delta}(O(wv^{2}(s)))ds, t\geq 2M,	
\end{equation}
and more precisely, one only needs to handle the part,
\begin{equation}\label{eq: g4e}
\int_{t-M}^{t}(t-s)^{-(1/2+)}\|\nabla w\|_{L^2_x}\|v^{2}(s)\|_{L_{x}^{6}}ds,
\end{equation}
and the other parts follow the same estimate as \eqref{eq: controlforf3}.
The estimate for \eqref{eq: g4e} will be treated in Lemma \ref{lem: g2}.

For $G_{1}$, one has 
\begin{lem}\label{lem: g1}
\begin{equation}
G_{1}(t)\lesssim A^{3}t^{-3/2}.
\end{equation}
\end{lem}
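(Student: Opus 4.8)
The plan is to estimate $G_1(t) = -i\int_0^t e^{i(t-s)\Delta}|v(s)|^2 v(s)\,ds$ by mimicking the three-regime analysis used for the deterministic linear term, but replacing the pointwise dispersive control of $e^{it\Delta}u_0^\omega$ (which is unavailable $A$-certainly) with the time-averaged substitutes \eqref{eq: ardis} and \eqref{eq: ardis2}. Since $v=e^{it\Delta}u_0^\omega$ is a \emph{linear} solution, $|v|^2 v$ is a cubic expression in a linear flow, and the only input we need is that $v$ obeys, in the time-average sense of Remark \ref{remweight}, the same bounds a linear evolution of an $H^1\cap L^1$ datum would obey. Concretely, I would split $\int_0^t = \int_0^{t/2} + \int_{t/2}^{t-M} + \int_{t-M}^t$ (or, for small $t$, the two-piece split $\int_0^{t/2}+\int_{t/2}^t$ exactly as in Lemma \ref{lem: thm1local}), with $M$ a power of $A$ to be fixed.

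For the contribution near $s=0$ and the ``bulk'' piece $s\in[t/2,t-M]$, I would apply the dispersive estimate $\|e^{i(t-s)\Delta}(|v|^2v)\|_{L_x^\infty}\lesssim (t-s)^{-3/2}\|v(s)\|_{L_x^3}^3$; here $t-s\gtrsim M$ or $t-s\gtrsim t$, so the kernel is harmless, and one integrates $\|v(s)\|_{L_x^3}^3$ in time. Using $\gamma_{3,\epsilon_1}(s)\|v(s)\|_{L^3_x}\in L_s^{1/\epsilon_2}$ from \eqref{eq: ardis} with $p=3$, together with the crude bound $\|v(s)\|_{L^3_x}\lesssim \|v\|_{L_t^\infty L_x^3}\lesssim A$ for small $s$ (via Sobolev and \eqref{eq: tr}), one gets $\int_0^\infty \|v(s)\|_{L^3_x}^3\,ds \lesssim A^{\beta}$ after a Hölder argument in $s$, so these two pieces are $\lesssim A^\beta t^{-3/2}$. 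For the near-diagonal piece $s\in[t-M,t]$ I would repeat the Sobolev-then-dispersive trick from Section \ref{3}: bound $\|\int_{t-M}^t e^{i(t-s)\Delta}|v|^2 v\,ds\|_{L_x^\infty}\lesssim \|\cdot\|_{W^{3+,1}}$, move one derivative inside, use $\|e^{i(t-s)\Delta}\langle\nabla\rangle(|v|^2v)\|_{L^{3+}_x}\lesssim (t-s)^{-(1/2+)}\|\langle\nabla\rangle(|v|^2v)\|_{L^{3/2-}_x}$, apply the fractional Leibniz rule to get $\|\langle\nabla\rangle(|v|^2v)\|_{L^{3/2-}_x}\lesssim \|\nabla v\|_{L^{6-}_x}\|v\|_{L^{6-}_x}^2 + \|v\|_{L^\infty\!\text{-type}}\cdots$ — more precisely distribute as $\|\nabla v\|_{L^{p}_x}\|v\|_{L^q_x}\|v\|_{L^r_x}$ with exponents summing appropriately — and then use Hölder in $s$ together with \eqref{eq: ardis}, \eqref{eq: ardis2} for $v$ and $\nabla v$ in the relevant $L^p_x$ at the endpoint exponent $\epsilon_2^{-1}$. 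Because $t-s\le M$ forces $s\sim t$, the weights $\gamma_{p,\epsilon_1}(s)\sim s^{3(1/2-1/p)-\epsilon_1}\sim t^{3(1/2-1/p)-\epsilon_1}$ can be pulled out and converted into the desired $t^{-3/2}$ decay, at the cost of a harmless $t^{\epsilon}$ and a power $M^\theta$ of $M$; choosing $M$ a fixed power of $A$ absorbs that into $A^\beta$.

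The main obstacle — and the place where the time-average philosophy really has to be used carefully — is the near-diagonal piece: there we cannot afford a pointwise-in-$s$ dispersive bound on $v(s)$, so the $L^p_x$ norms of $v$ and $\nabla v$ must be carried through in the $L^{1/\epsilon_2}_s$ (i.e.\ $L_t^{2+}$-ish) norm and paired against the singular kernel $(t-s)^{-(1/2+)}$ via Hölder, exactly as in \eqref{eq: afterredu}–\eqref{eq: finalredu}, making sure the Hölder exponents and the $\epsilon_1>\epsilon_2$ room in Lemma \ref{lem: ardis} are consistent and that the cubic Leibniz distribution of derivatives leaves each factor in a range $2<p<\infty$ where \eqref{eq: ardis}/\eqref{eq: ardis2} apply. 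All of this stays strictly subcritical — there is always an $\epsilon$ of room — so no borderline issue arises; the bookkeeping of exponents is the only delicate part. Assembling the three pieces yields $G_1(t)\lesssim A^\beta t^{-3/2}$, and relabeling $\beta$ (or noting $A^\beta\lesssim A^3$ after choosing $M$ appropriately, which is how the statement is phrased) gives $G_1(t)\lesssim A^3 t^{-3/2}$.
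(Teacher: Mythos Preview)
Your overall strategy is sound, but there is a genuine gap in the treatment of the ``bulk'' piece $s\in[t/2,t-M]$, and the paper's proof is considerably simpler than what you propose.

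For the bulk piece you write that since $t-s\gtrsim M$ the kernel $(t-s)^{-3/2}$ is ``harmless,'' and then you bound $\int_0^\infty\|v(s)\|_{L^3_x}^3\,ds\lesssim A^\beta$ and conclude the piece is $\lesssim A^\beta t^{-3/2}$. This does not follow: on $[t/2,t-M]$ the kernel is only $\gtrsim M^{-3/2}$ (or its $L^1_s$ norm is $\sim M^{-1/2}$), neither of which carries any $t$-decay. The $t^{-3/2}$ must come from the decay of $v$ itself at $s\sim t$, i.e.\ from pulling out $\gamma_{3,\epsilon_1}(s)^{-3}\sim t^{-3/2+}$ and then placing $(\gamma_{3,\epsilon_1}\|v\|_{L^3_x})^3$ in the weighted $L_s$ norm from \eqref{eq: ardis}; as written, your argument on this interval does not close.

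The paper avoids this entirely by using only a \emph{two}-piece split $\int_0^t=\int_0^{t/2}+\int_{t/2}^t$, with no $M$ parameter. The first piece is handled exactly as you describe. For the \emph{whole} of $[t/2,t]$ the paper applies the Sobolev-then-dispersive trick and distributes the three factors of $v$ as $\|v\|_{H^1}\,\|v\|_{L^{12-}_x}\,\|v\|_{L^{12-}_x}$. The derivative is absorbed into the deterministic bound $\|v\|_{H^1}\le A$ from \eqref{eq: tr}, so \eqref{eq: ardis2} is never used; the two $L^{12-}$ factors via \eqref{eq: ardis} contribute $\gamma_{12-,\epsilon_1}(s)^{-2}\sim t^{-(5/2-)}$, which is strong enough to pair with $\int_{t/2}^t(t-s)^{-(1/2+)}ds\sim t^{1/2-}$ and yield $t^{-2+}\ll t^{-3/2}$. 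This gives exactly $A^3t^{-3/2}$ without exponent juggling, without a third regime, and without invoking the gradient estimate \eqref{eq: ardis2}. Your near-diagonal Leibniz scheme with $\|\nabla v\|_{L^p_x}$ would work after fixing the exponents, but it is unnecessarily elaborate here; the point is that one factor of $v$ can always sit in $H^1$.
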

\begin{proof}
One again splits the integral $\int_{0}^{t}=\int_{0}^{t/2}+\int_{t/2}^{t}$.
For the first part, one estimates
\begin{equation}
\big\|\int_{0}^{t/2}e^{i(t-s)\Delta}|v|^{2}v(s)ds\big\|_{L_{x}^{\infty}}
\lesssim t^{-3/2}\int_{0}^{\infty}\|v(s)\|^{3}_{L_{x}^{3}}ds.
\end{equation}

Note that locally, $\|v\|_{L_x^{3}}\leq \|v\|_{H^{1}}\leq A$, and for  $t$ large, by choosing $\epsilon_{1},\epsilon_{2}$ small, we have, thanks to \eqref{eq: ardis}
\begin{equation}
\int_{1}^{t}\gamma_{3,\epsilon_{1}}^{-3}(s)\|\gamma_{3,\epsilon_{1}}(s)v(s)\|_{L_{x}^{3}}^{3}ds
\lesssim \|\gamma_{3,\epsilon_{1}}^{-3}\|_{L_{t}^{\frac{1}{1-3\epsilon_{2}}}}\|\gamma_{3,\epsilon_{1}}(s)v(s)\|_{L_{t}^{1/\epsilon_{2}}L_{x}^{3}}\lesssim A^{3}.
\end{equation}
This handles the $\int_{0}^{t/2}$ part.
For the second part $\int_{t/2}^{t}$, when $t$ is small, one estimates as 
\begin{equation}
\begin{aligned}
&\big\| \int_{t/2}^{t}e^{i(t-s)\Delta}|v(s)|^{2}v(s)ds\big\|_{L_{x}^{\infty}} \\
&\int_{t/2}^{t}\|e^{i(t-s)\Delta}|v(s)|^{2}v(s)\|_{L_{x}^{\infty}}ds \\
&\lesssim \int_{t/2}^{t}\|e^{i(t-s)}\Delta |v|^{2}v\|_{W_x^{1,3+}}ds\\
\lesssim &\int_{t/2}^{t}(t-s)^{-(\frac{1}{2}+)}\|v\|_{H^{1}}\|v\|_{L_{x}^{12-}}\|v\|_{L_{x}^{12-}}ds\\
\lesssim &t^{1/2-}A\int_{t/2}^{t}\gamma_{12-,\epsilon_{1}}^{-2}\|\gamma_{12-,\epsilon_{1}}v\|_{L_{t}^{1/\epsilon_{1}}L_{x}^{12-}}^{2}ds\\
\lesssim &t^{1/2-}(1+t)^{-(\frac{5}{2}-)}.
\end{aligned}
\end{equation}
\end{proof}

For the estimate for \eqref{eq: g4e}, one has 
\begin{lem}\label{lem: g2}
For $M$ large,  $t\geq 2M$, one has 
\begin{equation}
\int_{t-M}^{t}	(t-s)^{-(\frac{1}{2}+)}\|w\|_{H^{1}}\|v^{2}\|_{L^{6}_x}ds\lesssim M^{1/2+}A^{2}(1+t)^{-(5/2-)}.
\end{equation}
\end{lem}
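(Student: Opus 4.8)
\textbf{Proof proposal for Lemma \ref{lem: g2}.}

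The plan is to bound the time integral by pulling the singular kernel $(t-s)^{-(1/2+)}$ into an $L^{1}_{t}$-in-time factor over the window $[t-M,t]$ (which is integrable since the exponent is $<1$), and to extract decay from the remaining factor $\|w\|_{H^{1}}\|v^{2}\|_{L^{6}_{x}}$. First I would recall that, by the a priori bound \eqref{eq: scatteringnorm}, $\|w(s)\|_{H^{1}}\lesssim A$ uniformly in $s$, so that factor contributes only a constant $A$. The real decay must come from $\|v^{2}(s)\|_{L^{6}_{x}}=\|v(s)\|^{2}_{L^{12}_{x}}$, and here I would use Lemma \ref{lem: ardis} with $p=12-$: writing $\|v(s)\|_{L^{12-}_{x}}=\gamma_{12-,\epsilon_{1}}^{-1}(s)\,\bigl(\gamma_{12-,\epsilon_{1}}(s)\|v(s)\|_{L^{12-}_{x}}\bigr)$, the weighted quantity is bounded in $L^{1/\epsilon_{2}}_{t}$ by $A$, $A$-certainly.

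Next I would carry out the estimate as follows. On $[t-M,t]$ with $t\geq 2M$ we have $s\sim t$, so $\gamma_{12-,\epsilon_{1}}^{-1}(s)\sim (1+s)^{-(3(\frac12-\frac1{12})-\epsilon_{1})}\sim (1+t)^{-(5/4-)}$, and since this appears squared (from $\|v\|^{2}_{L^{12-}}$) it yields $(1+t)^{-(5/2-)}$. Pulling this out of the integral, together with $\|w\|_{H^1}\lesssim A$ and one more power of $A$ from the weighted Strichartz/dispersive norm of $v$, I am left with
\begin{equation*}
\int_{t-M}^{t}(t-s)^{-(1/2+)}\,\bigl(\gamma_{12-,\epsilon_{1}}(s)\|v(s)\|_{L^{12-}_{x}}\bigr)^{2}ds.
\end{equation*}
Applying H\"older in $s$ with exponents $(2-)$ and $(2+)$, or more directly bounding $\bigl(\gamma_{12-,\epsilon_{1}}(s)\|v(s)\|_{L^{12-}_{x}}\bigr)^{2}$ in $L^{1/\epsilon_{2}}_{t}$-type norms via Lemma \ref{lem: ardis} (taking $1/\epsilon_{2}$ large, i.e. $\epsilon_{2}$ small, so the conjugate is close to $1$ and $(t-s)^{-(1/2+)}$ still integrates on a window of length $M$), produces the factor $M^{1/2+}$ from $\|(t-s)^{-(1/2+)}\|_{L^{1+}_{s}[t-M,t]}$ and the factor $A^{2}$ from the weighted norm of $v$ squared. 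Collecting everything gives the claimed bound $M^{1/2+}A^{2}(1+t)^{-(5/2-)}$.

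The main obstacle, as usual in these randomized arguments, is bookkeeping of the small parameters: one must choose $\epsilon_{1}>\epsilon_{2}$ small enough that (i) Lemma \ref{lem: ardis} applies with $p=12-$, (ii) the weight $\gamma_{12-,\epsilon_{1}}^{-1}(s)\sim (1+s)^{-(5/4-)}$ really does beat $(1+t)^{-(5/2-)}$ after squaring (so the loss $\epsilon_{1}$ in the exponent is harmless — this is exactly the ``never in a critical situation'' remark following \eqref{eq: secduhamel}), and (iii) the H\"older exponent dual to $1/\epsilon_{2}$ keeps $(t-s)^{-(1/2+)}$ integrable over the length-$M$ window while producing only a benign power $M^{1/2+}$. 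None of these steps is deep; the care needed is to verify that a single admissible choice of $(\epsilon_{1},\epsilon_{2})$ makes all three work simultaneously, after which the estimate is a direct application of H\"older's inequality, the dispersive estimate, and the weighted bound \eqref{eq: ardis}.
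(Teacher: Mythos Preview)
Your proposal is correct and follows essentially the same route as the paper: bound $\|v^{2}\|_{L^{6}_{x}}\le \|v\|_{L^{12}_{x}}^{2}$, use $\|w\|_{H^{1}}\lesssim A$, insert the weight $\gamma_{12-,\epsilon_{1}}$ from Lemma~\ref{lem: ardis}, exploit $s\sim t$ on $[t-M,t]$ to pull out $(1+t)^{-(5/2-)}$ from $\gamma_{12-,\epsilon_{1}}^{-2}$, and apply H\"older in $s$ to separate the integrable kernel $(t-s)^{-(1/2+)}$ (producing the factor $M^{1/2\pm}$) from the weighted norm of $v$. Your discussion of the $\epsilon$-bookkeeping is more explicit than the paper's, but the argument is the same; note that the precise power of $A$ (you drift between $A^{2}$ and what would naturally be $A^{3}$, as does the paper) is immaterial since any polynomial in $A$ is acceptable downstream.
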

\begin{proof}
This is the parallel computation as in the last part of the proof of Lemma \ref{lem: g1}. Note that 
$\|f^{2}\|_{L_x^{6}}\leq \|f\|_{L_x^{12}}^{2}$,
and we have
\begin{equation}
\begin{aligned}
&\int_{t-M}^{t}	(t-s)^{-(\frac{1}{2}+)}\|w\|_{H^{1}}\|v^{2}\|_{L_x^{6}}ds\\
&\int^{t}_{t-M}(t-s)^{-(\frac{1}{2}+)}\|w\|_{H^{1}}\|v\|_{L_{x}^{12}}\|v\|_{L_{x}^{12}}ds\\
&\lesssim M^{1/2-}A\int_{t/2}^{t}\gamma_{12-,\epsilon_{1}}^{-2}\|\gamma_{12-,\epsilon_{1}}v\|_{L_{t}^{1/\epsilon_{1}}L_{x}^{12-}}^{2}ds\\
&\lesssim M^{1/2+}A^{2}(1+t)^{-(5/2-)}.
\end{aligned}
\end{equation}
\end{proof}

To summarize, similar to \eqref{eq: firsta}, with also Lemma \ref{lem: g1}, \ref{lem: g2}, choosing $M\sim A^{4}$, we have
\begin{equation}\label{eq: randomatlong}
\begin{aligned}
A(t)\leq CA^{3}+CM^{1/2+}A^{2}(1+t)^{-(5/2-)}t^{3/2}
CA^{\beta_{2}}+CA^{\beta_{3}}(\int_{0}^{t}A^{4+}(\|w\|_{L_{x}^{6-}}+\|v\|_{L_{x}^{6-}})^{4+})^{1/4}.
\end{aligned}
\end{equation}
Note that the first two terms in \eqref{eq: randomatlong} come from Lemma \ref{lem: g1}, \ref{lem: g2}.
Applying Gronwall's inequality, the desired estimate \eqref{eq: thm1ran} follows.

We briefly mention some technical point if one wants to further generalize Theorem \ref{thm2}, \ref{thm3}.
One again applies the pseudo-conformal energy trick to prove
\begin{equation}
\|u(t)\|_{L_{x}^{6}}\lesssim t^{-1},	
\end{equation}
and we only need this estimate for $t$ large.

It should be noted in the proof of Theorem \ref{thm2}, \ref{thm3}, it is enough to have
\begin{equation}\label{eq: pd}
\|u(t)\|_{L_{x}^{6}}\lesssim t^{-(1-)},
\end{equation}
for $t$ large, and one only needs the decay estimate \eqref{eq: pd} to hold in time average sense. Thus, we again split $u=v+w$, and \eqref{eq: ardis} will replace estimate \eqref{eq: pd} for $v$, and $w=u-v$ also a priori satisfies  \eqref{eq: pd} in time average sense, and this is enough.
One will also need a $H^{s}$ version of \eqref{eq: tr}. We will prove $A$-certainly,
\begin{equation}
\|\sum_{n}g_{n}(\omega)\phi_{n}u_{0}\|_{H^{s}}\leq A.
\end{equation}
It is enough to prove the following deterministic inequality, 
\begin{equation}
\|f\|_{H^{s}}^{2}\lesssim \sum_{n}\|\phi_{n}f\|_{H^{s}}^{2}.	
\end{equation}
It is enough to prove for all $n$, 
\begin{equation}
\|\phi_{n}\langle \nabla \rangle^{s}f\|_{L^2_x}^{2}\lesssim \sum_{|n-n'|\leq 10}\sum_{n'}\|\phi_{n'}f\|_{H^{s}}^{2},
\end{equation}
which is obvious.

\section{Further remarks}\label{6}
In this section, we make a few more remarks on the decay results for other well known  dispersive equations. We will list some results that can be obtained by using our methods with suitable modifications. We leave the proofs for interested readers. We emphasize that these decay results are based on corresponding scattering results and we will provide the appropriate references. Moreover, we remark here that the implicit constants appearing  are dependent of the size of initial data and the size of scattering norm (for some models, the scattering norm can be expressed by a function of the size of initial data according to existing results).\vspace{3mm}

1. \emph{The 3D, energy critical defocusing NLS case}\vspace{3mm}

The 3D energy critical NLS is an important  and well studied model in the area of dispersive equations. We refer to the milestone work \cite{colliander2008global} for the global well-posedness and scattering result for this model, (see also \cite{bourgain1999global,grillakis2000nonlinear} for the radial case). We write  the Schr\"odinger initial value problem as follows,
\begin{equation}\label{maineq1}
    (i\partial_t+\Delta_{\mathbb{R}^3})u=u|u|^{4}, \quad u(0,x)=u_0(x)\in L^1 \cap H^{1+}(\mathbb{R}^3).
\end{equation} 
Assume $\phi$ is a solution to \eqref{maineq1}. We expect that one could show 
\begin{equation}
   \|\phi(t)\|_{L_{x}^{\infty}} \leq C(u_0) |t|^{-\frac{3}{2}}.
\end{equation}
Moreover, the constant dependence is triple exponential of the initial data since the scattering norm is double exponential and we have a Gronwall's inequality to use. This result obviously improves Theorem 1.1. in \cite{fan2021decay} since the regularity requirement is much lower (from $H^3$ to $H^{1+}$). We also note that one may also consider the higher dimensional case, see \cite{ryckman2007global,visan2007defocusing} for the global results. \vspace{3mm}

We \textbf{emphasize} that, if one only considers $L^p_x$-decay ($p<\infty$), in the sense of showing 
\begin{equation}
   \|\phi(t)\|_{L_{x}^{p}} \leq C(u_0) |t|^{-3(\frac{1}{2}-\frac{1}{p})},
\end{equation}
then assuming $H^1$-regularity instead of $H^{1+}$-regularity for the initial data is already enough, which is more natural. This $\epsilon$-requirement is caused by a log-divergence problem when one considers the $L^{\infty}_x$-decay case. \vspace{3mm}

2. \emph{The energy supercritical NLS case.}\vspace{3mm}

We note that this method can be modified to handle the energy supercritical case once the scattering result is known. For this case, we focus on a typical model as an example: 4D cubic defocusing NLS. The model is $\dot{H}^{\frac{3}{2}}$ critical, which is above the energy critical level. See \cite{dodson2017defocusing} and the references therein for the corresponding scattering result and background. We consider the Schr\"odinger initial value problem,
\begin{equation}\label{maineq2}
    (i\partial_t+\Delta_{\mathbb{R}^4})u=u|u|^{4}, \quad u(0,x)=u_0(x)\in L^1 \cap H^{\frac{3}{2}+}(\mathbb{R}^4).
\end{equation} 
Assume  $\phi$ is solution to  \eqref{maineq2} and the a-priori bound $\limsup_{t\in I} \|u\|_{H^{\frac{3}{2}+}}=M_1 < \infty$, where $I$ is the maximal interval of existence. We expect that one could show \begin{equation}
   \|\phi(t)\|_{L_{x}^{\infty}} \leq C(u_0) t^{-2}.
\end{equation}
Here the constant depends on the size of the initial data, $M_1$ and the scattering norm. 

As a comparison, we also note that, for certain defocusing supercritical NLS problems, one has blow-up type results, see recent result of Merle-Raphael-Rodnianski-Szeftel \cite{merle2022blow} and the references therein for more information.) 
\vspace{3mm}

3. \emph{The fourth-order NLS case.}\vspace{3mm}

There are many different specific fourth-order NLS models. We consider a typical case as an example: the cubic fourth-order Schr\"odinger equation (4NLS) on $\mathbb{R}^d$ ($5 \leq d \leq 8$). We refer to \cite{Pau1} for the corresponding global result and the references therein for the background. We consider the Schr\"odinger initial value problem, for ($5 \leq d \leq 8$),
\begin{equation}\label{maineq3}
    (i\partial_t+(\Delta_{\mathbb{R}^d})^{2})u=u|u|^{p}, \quad u(0,x)=u_0(x) \in L^1 \cap H^{2+}(\mathbb{R}^d).
\end{equation} 
Assume  $\phi$ is solution to \eqref{maineq3}. We expect that one could show
\begin{equation}
   \|\phi(t)\|_{L_{x}^{\infty}} \leq C(u_0) |t|^{-\frac{d}{4}}.
\end{equation}
Here the constant depends on the size of the initial data and the scattering norm. This result improves \cite{yu2022decay} in the sense of regularity requirement. \vspace{3mm}

4. \emph{The fractional NLS case.}\vspace{3mm}

We also consider a typical model and refer to \cite{guo2018energy} for the scattering result. We consider the fractional Schr\"odinger initial value problem  for $d\geq 2$ and $\frac{d}{2d-1}<\alpha<1$,
\begin{equation}\label{maineq4}
    (i\partial_t+(\Delta_{\mathbb{R}^d})^{\alpha})u=u|u|^{\frac{4\alpha}{d-2\alpha}}, \quad u(0,x)=u_0(x) \in L^1 \cap H^{\alpha+}.
\end{equation} 
Assume  $\phi$ is solution to \eqref{maineq4}. We expect that one could show
\begin{equation}
   \|\phi(t)\|_{L_{x}^{\infty}} \leq C(u_0) |t|^{-\frac{d}{2}}.
\end{equation}
Here the constant depends on the size of the initial data and the scattering norm. One may also consider other fractional NLS cases, see \cite{FLS1,FLS2,LiebYau1} and the references therein.\vspace{3mm}

5. \emph{Some  other cases.}\vspace{3mm}

There are  more  models one may consider for an analysis similar to the one we conducted above: cubic-quintic NLS (see \cite{murphy2021threshold,killip2021scattering} and the references therein), inhomogeneous NLS (see \cite{miao2019scattering} and the references therein), NLS on waveguides (see \cite{HP,IPRT3,Z1} for examples), NLS with a partial harmonic potential (this case is similar to the waveguide case, see \cite{antonelli2015scattering,cheng2021scattering,hani2016asymptotic}), Schr\"odinger resonant systems (see \cite{CGZ,yang2018global}), nonlinear wave equations (see \cite{tao2006nonlinear}),  Klein-Gordon equation (see \cite{tao2006nonlinear}) and NLS with a nice potential such that the dispersive estimate and the scattering hold (see \cite{killip2015energy} for an example and the references therein).

\bibliographystyle{plain}
\bibliography{BG.bib}

\end{document}